\newtheorem{defn}{Definition}
\newtheorem{thm}{Theorem}
\newtheorem{prop}{Proposition}
\newtheorem{lem}{Lemma}
\newtheorem{cor}[lem]{Corollary}
\newtheorem{rem}{Remark}
\DeclareMathOperator{\sign}{\operatorname{sign}}
\DeclareMathOperator{\erf}{erf}
\DeclareMathOperator{\esup}{ess-sup}
\let\ds\displaystyle
\let\bs\boldsymbol
\newcommand{\FF}{{\mathcal F}}
\newcommand{\MM}{{\mathcal M}}
\newcommand{\NN}{{\mathcal N}}
\newcommand{\PPP}{{\mathcal P}}
\newcommand{\LL}{{\mathcal L}}
\newcommand{\R}{ {\mathbb R} }
\newcommand{\N}{ {\mathbb N} }
\newcommand{\E}{ {\mathbb E} }
\newcommand{\V}{ {\mathbb V} }
\newcommand{\Prob}{ {\mathbb P} }
\newcommand{\Q}{ {\mathbb Q} }
\newcommand{\ep}{{\varepsilon}}
\newcommand{\indiq}{\hbox{\rm 1}{\hskip -2.8 pt}\hbox{\rm I}}
\newcommand{\la}{{\langle}}
\newcommand{\ra}{{\rangle}}
\newcommand{\infep}{{\infty,\ep}}
\newcommand{\Xit}{{X_{i,t}^N}}
\newcommand{\Xis}{{X_{i,s}^N}}
\newcommand{\Vit}{{V_{i,t}^N}}
\newcommand{\Vis}{{V_{i,s}^N}}
\newcommand{\Xjt}{{X_{j,t}^N}}
\newcommand{\Yit}{{Y_{i,t}^N}}
\newcommand{\Yis}{{Y_{i,s}^N}}
\newcommand{\Wit}{{W_{i,t}^N}}
\newcommand{\Wis}{{W_{i,s}^N}}
\newcommand{\Yjt}{{Y_{j,t}^N}}
\newcommand{\Yjs}{{Y_{j,s}^N}}
\newcommand{\muNXt}{{\mu^N_{X,t}}}
\newcommand{\cqfd}{{\unskip\kern 6pt\penalty 500
\raise -2pt\hbox{\vrule\vbox to 6pt{\hrule width 6pt
\vfill\hrule}\vrule}\par}}
\title{Propagation of chaos for the Vlasov-Poisson-Fokker-Planck system in 1D}
\author{Maxime Hauray and Samir Salem\footnote{
Universit{\'e} d'Aix-Marseille, CNRS, \'Ecole Centrale,  I2M, UMR 7373,  13453 Marseille, France.
\texttt{maxime.hauray@univ-amu.fr}, \texttt{samir.salem@univ-amu.fr}}
}
\date{}
\begin{document}
\maketitle

\begin{abstract}
We consider a  particle system in 1D, interacting via  repulsive or attractive Coulomb forces. We prove the trajectorial propagation of molecular chaos towards a nonlinear SDE associated to the Vlasov-Poisson-Fokker-Planck equation. We obtain a quantitative estimate of convergence in expectation, with an optimal convergence rate of order $N^{-1/2}$. We also prove some exponential concentration inequalities of the associated empirical measures.
A key argument is a weak-strong stability estimate on the (nonlinear) VPFP equation, that we are able to adapt for the particle system in some sense. 
\end{abstract}

{\bf Keywords:} Particles system, 1D Vlasov-Poisson equation, Propagation of molecular chaos, 
Monge-Kantorovich-Wasserstein distances, exponential concentration inequalities.

\medskip
{\bf AMS Subject Classification:} 35Q83 Vlasov-like equations,  60K35 Interacting random processes, 60H10   	Stochastic ordinary differential equations, 82C21   	Dynamic continuum models.

\section{Introduction}
We consider here a one dimensional system of $N$ particles, with position $X_i^N \in \R$ and velocity $V_i^N \in \R$, interacting via the Poisson interaction, and submitted to independent Brownian noises and friction. The associated system of Stochastic Differential Equation (SDE) is the following:
\begin{equation} \label{eq:Nps}
d\Xit = \Vit \,dt ,\qquad
d\Vit = \Bigl( \frac1N \sum_{j=1}^N K \bigl(\Xit - \Xjt \bigr) -  \Vit  \Bigr)\,dt + \sqrt 2  \,dB_{i,t},
\end{equation}
where the $(B_{i,t})_{t \ge 0}$ are independent Brownian motions. 
The interaction kernel is defined (everywhere) by 
\begin{equation} \label{eq:defK}
K(x) := \pm \frac12 \sign(x)  = \pm \frac12 
\begin{cases}
1  & \text{if } x >0, \\
0 & \text{if } x=0, \\
-1 & \text{if } x<0.
\end{cases}
\end{equation}

The case $K= \frac12 \sign$ corresponds to the repulsive case, while the interaction is attractive  when $K = - \frac12 \sign$. This two cases lead of course to very different dynamics, but  concerning the propagation of chaos in finite time the sign of $K$ is not very relevant, so we will handle both cases in the same way.  

%%%%%%%%%%%%%%%%%%%%%%%%%%
\paragraph{Well-posedness of the particle system.}

The first non-obvious problem is raised by the particle system~\eqref{eq:Nps}. Since the force field is only of bounded variation ($BV$ in short) and not Lipschitz, the standard theory does not apply. Neither does the theory of existence and uniqueness developed for uniformly elliptic diffusions (see for instance\cite{Ver81,Davie,CatGub} among many others) since the diffusion act here only on the velocities.
However, due to the particular geometry of the problem, we can still get weak existence and uniqueness in law. 
The precise result is the following

\begin{thm} \label{thm:WPPS}
For any $N \ge 2$, and any (deterministic) initial condition $(X_{i,0}^N,V^N_{i,0})_{i \le N} \in \R^{2N}$, weak existence and uniqueness in law hold for SDE~\eqref{eq:Nps}.
\end{thm}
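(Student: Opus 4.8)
The plan is to exploit the one-dimensional structure: the only bad set for the drift is the diagonal where two particles collide in position, $\{X_i = X_j\}$, and on a short time interval the singularity is harmless because the velocities carry the particles through each other transversally. First I would establish \emph{weak existence} by a compactness/approximation argument: replace $K$ by a smooth bounded approximation $K_\ep$ (e.g. convolve $\sign$ with a mollifier), so that the regularized system has a unique strong solution by classical theory; the drift coefficients are uniformly bounded in $\ep$ on the velocity part only through $-V_{i,t}$ and the noise is additive and nondegenerate in $V$, so one gets tightness of the laws of $(X^N,V^N)$ on $C([0,T],\R^{2N})$ via standard moment and Aldous-type estimates (the $X$-component is Lipschitz-in-time given $V$, the $V$-component has bounded interaction drift). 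Passing to the limit in the martingale problem is routine except for the interaction term $\frac1N\sum_j K_\ep(X_{i,t}-X_{j,t})$: here I would use that $K_\ep\to K$ pointwise except at $0$, together with an \emph{occupation-time / non-degeneracy estimate} showing that for the limiting process $\int_0^T \indiq_{X_{i,t}=X_{j,t}}\,dt = 0$ almost surely, so the discontinuity of $\sign$ at $0$ is not seen. This last point is where the nondegenerate diffusion in velocity enters: although the noise does not act directly on $X$, the pair difference $X_{i,t}-X_{j,t}$ has a velocity-difference $V_{i,t}-V_{j,t}$ which is itself a nondegenerate diffusion, so $X_i-X_j$ is a (degenerate but hypoelliptic, kinematic) process whose law at fixed time $t>0$ has a density — hence spends zero time at the origin.

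For \emph{uniqueness in law} I would set up the martingale problem for~\eqref{eq:Nps} and argue that any solution satisfies the same key occupation-time estimate $\int_0^T \indiq_{X_{i,t}=X_{j,t}}\,dt=0$; one way to get this a priori for an arbitrary solution is a Krylov-type estimate or, more elementarily in 1D, to track the signed quantity $Z_{ij,t}=X_{i,t}-X_{j,t}$ together with its velocity $W_{ij,t}=V_{i,t}-V_{j,t}$, observe that $W_{ij}$ solves a genuinely elliptic one-dimensional SDE with bounded drift, so $W_{ij,t}\neq 0$ for a.e.\ $t$, and then $Z_{ij}$ is strictly monotone on the (open, full-measure) set of times where $W_{ij}\neq 0$, whence its zero set is at most countable. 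Given that the diagonal is a.s.-negligible in time, the drift coefficient is, along any solution, a.e.-equal to a fixed measurable function, and one can then either invoke a Girsanov/Cameron-Martin argument — the additive nondegenerate velocity noise lets us remove the bounded drift $\frac1N\sum_j K(X_i-X_j)-V_i$ by an exponential martingale, reducing uniqueness in law to that of the driftless (Ornstein-Uhlenbeck-kinematic) reference system, which is Gaussian and trivially unique — or appeal to a known one-dimensional pathwise-uniqueness result for the velocity difference à la Zvonkin--Veretennikov lifted to the particle configuration. I expect the Girsanov route to be the cleanest: the Novikov condition is immediate since the drift is bounded on any finite configuration (recall the initial data is deterministic and $K$ is bounded), so $\mathrm dB\mapsto \mathrm dB + (\text{drift})\mathrm dt$ gives an equivalence of measures on path space and uniqueness in law for~\eqref{eq:Nps} is inherited from the reference measure.

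The main obstacle, I believe, is making rigorous the claim that the collision set $\{t: X_{i,t}=X_{j,t}\}$ has zero Lebesgue measure (and, for the convergence of the regularized interaction, that this holds uniformly enough to pass to the limit). The subtlety is that the noise is \emph{not} elliptic on the full phase space $\R^{2N}$ — it is only hypoelliptic — so one cannot directly quote a Krylov estimate for the $2N$-dimensional degenerate diffusion; instead one must exploit the explicit kinematic structure, namely that each position difference is the time-integral of a nondegenerate velocity-difference diffusion, and invoke either an explicit computation of the (Gaussian-like, via the OU velocity) transition density of the pair $(Z_{ij},W_{ij})$ or a one-dimensional occupation-density (Revuz–Yor local time) argument for $Z_{ij}$ after a random time change. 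A secondary technical point is the lack of Lipschitz regularity when passing to the limit in the martingale problem: one needs the test-function formulation and dominated convergence against the a.s.-nonatomic occupation measure in time, rather than any naive continuity of $x\mapsto K(x)$. Once the zero-occupation-time fact is in hand, both existence (stability of the martingale problem under regularization) and uniqueness (Girsanov) follow by now-standard arguments.
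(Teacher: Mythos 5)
Your proposal is correct in identifying Cameron--Martin--Girsanov as the right tool, but it is organized around a perceived obstacle --- the discontinuity of $K$ and the resulting collision set --- that the paper's proof shows is simply not there, once the problem is set up correctly.

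The paper first rewrites~\eqref{eq:Nps} as a delayed SDE in $V^N$ alone (the positions $X^N$ being the pathwise time-integral of $V^N$), and then applies Girsanov \emph{directly}, with no regularization and no martingale-problem limit. For existence one constructs the candidate pair $(U^N,V^N)$ explicitly from a given Brownian motion (with $V^N_t = V^N_0 + \sqrt2\,B^N_t$) and checks that after the change of measure the auxiliary process $U^N$ is a Brownian motion; for uniqueness one changes measure in the other direction to turn $2^{-1/2}(V^N_t - V^N_0)$ into a Brownian motion and observes that the Radon--Nikodym density is a measurable functional of the path $V^N$ alone, so the law of $V^N$ under $\Prob$ is determined. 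In this framework the only thing required of the drift is that it be bounded measurable (plus the linear friction term, see below): its discontinuity is irrelevant. There is no need to know that $\int_0^T \indiq_{X_{i,t}=X_{j,t}}\,dt = 0$, because $K$ is defined everywhere (with $K(0)=0$ by~\eqref{eq:defK}), so the drift is already a fixed bounded measurable function of the configuration, and no regularization or approximation ever enters. Concretely, the occupation-time/collision-set argument you build for uniqueness is a detour: what you describe as ``the drift coefficient is, along any solution, a.e.-equal to a fixed measurable function'' holds tautologically for any choice of value of $K$ at the origin, which is all Girsanov requires. The occupation-time estimate \emph{would} be genuinely needed to justify your regularization-plus-tightness route to existence (one cannot otherwise pass to the limit in the interaction term, for the reason you give), and your sketch of how to get it --- exploiting that $X_i - X_j$ is the time-integral of a uniformly elliptic $1$D velocity difference --- is plausible, but it is a substantially harder technical route that the paper entirely sidesteps, and you correctly flag this as the main obstacle. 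Finally, a small inaccuracy: your claim that Novikov is ``immediate since the drift is bounded'' overlooks the $-V_i$ friction term, which is linear and unbounded in $V$; the paper handles this by bounding $|\tilde H^N_t|$ a priori by a Gaussian tail (using the OU-type integral form of $V^N_t$) and then checking the exponential-moment condition for the exponential martingale. So: same key tool, genuinely different execution, and the paper's execution removes exactly the technical step you flagged as the hard one.
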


Since \eqref{eq:Nps} is linear, it also implies weak existence and uniqueness for any random initial condition. 	
Theorem~\ref{thm:WPPS} is proved in Section~\ref{sec:WPPS} using the following strategy:  we reformulate~\eqref{eq:Nps} in an SDE  with memory involving the
$(V^N_{i,t})_{i \le N, t\ge 0}$ only (simply because  the $X_{i,t}^N$ are time integrals of the $V^N_{i,t}$). Then we apply to that new (non-markovian) SDE a standard technique relying on the Girsanov's theorem. To the best of our knowledge, the strong existence and uniqueness of solution to that system are yet unknown, and we were not able to prove it.

%%%%%%%%%%%%%%%%%%%%%%%%%%%%%%%%%%%%%
\paragraph{A non-linear SDE related to the Vlasov-Poisson-Fokker-Planck equation.}

When the number of particles is large, and under the assumption that two particles picked up among all the others are roughly independent at any time (in particular this should be true at $t=0$), we expect the $N$ particles to behave almost like $N$ i.i.d copies of the (expected unique) solution to the following non linear SDE, or McKean-Vlasov process:
\begin{equation} \label{eq:NLSDE}
d Y_t = W_t \,dt, \qquad
d W_t = \E_{\bar Y_t} \bigl[ K(Y_t - \bar Y_t) \bigr] \,dt  - W_t \, dt +   \sqrt 2\, dB_t,
\end{equation}
where $(B_t)_{t  \ge 0}$ is a Brownian motion (independent of the rest) and $\bar Y_t$ is an independent copy of $Y_t$. 

We will prove the well-posedness of that nonlinear SDE for initial data with a uniform control on the velocity tails, and even a weak-strong stability estimate (here ``strong solution'' means that the law $\LL(Y_t)$ of $Y_t$ remains uniformly bounded in time). Before stating the precise result,  we introduce two useful norms:

\begin{defn}
For any $\lambda >0$, and any $\gamma >0$, we define  for any $ f \in L^\infty(\R^2)$ the  two following norms:
\begin{align}
\| f\|_{e,\lambda} &:=  \esup_{(x,v) \in \R^2} f(x,v) e^{\lambda | v |}  \in [0, + \infty] \label{normlambda} \\
\| f\|_{p,\gamma} &:=  \esup_{(x,v) \in \R^2} f(x,v) \la v \ra^{-  \gamma}
  \in [0, + \infty] \label{normgammaa} 
\end{align}
where $\la v \ra =\sqrt{1+v^2}$, so that equivalently $\| f\|_{p,\gamma} =  \esup_{(x,v) \in \R^2} f(x,v)\bigl(1+v^2\bigr)^{-  \gamma/2}$, and the essential supremum are taken with respect to the Lebesgue measure.
\end{defn}

\begin{thm} \label{thm:WPproc}

(i) Existence of strong solutions:

Assume that the law $f_0$ of the initial condition $(Y_0,W_0)$ satisfies $f_0 \in \PPP_1(\R^2) \cap  L^1(\R^2)$ and also  $\| f_0\|_{e,\lambda} < + \infty$, for some $ \lambda >0$ or $\| f_0\|_{p,\gamma} < + \infty$, for some $ \gamma >1$. Then, given any Brownian motion $(B_t)_{t \ge 0}$  (independent of the initial conditions) there exists a solution $(Y_t,W_t)$ to~\eqref{eq:NLSDE} with initial condition $(Y_0,W_0)$, and its law at time $t \ge 0$: $f_t = \LL(Y_t,W_t)$ satisfies either
\begin{equation} 
\label{eq:expdec}
\|  f_t \|_{e,\lambda}  \leq  2 \, e^{t+ \lambda +  \frac{\lambda^2}2}    \|  f_0 \|_{e,\lambda e^{-t}},
\quad \text{or} \quad
\| f_t \|_{p, \gamma} \le C_{\gamma} e^{\gamma t}  \|  f_0 \|_{p,\gamma},
\end{equation}
where $C_{\gamma}$ is a constant depending only on $\gamma$.

\medskip
(ii) Weak/strong stability for solution with bounded density (in $y$). 

If $(Y_t^1,W_t^1)$ and $(Y_t^2,W_t^2)$ are two solutions to~\eqref{eq:NLSDE} built on the same probability space with the same Brownian motion $(B_t)_{t \in \R}$,  if the density  $\rho^1_t = \LL(Y^1_t)$ is uniformly bounded at any time: $\| \rho_t \|_\infty <+\infty$ for any $ t \ge 0$, then the following stability estimate holds
\begin{equation} \label{eq:WSstab}
\E \bigl[ |Y^1_t - Y^2_t | + |W^1_t - W^2_t| \bigr] \le 
e^{8 \bigl(t + \int_0^t \| \rho^1_s\|_\infty \,ds \bigr)}
\E \bigl[ |Y^1_0- Y^2_0 | + |W^1_0 - W^2_0| \bigr].
\end{equation}
\end{thm}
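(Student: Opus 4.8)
The plan is to prove the weak/strong stability estimate~\eqref{eq:WSstab} by a direct coupling argument at the level of the SDE, exploiting that the two solutions share the same Brownian motion so the noise cancels in the difference. First I would write $D^Y_t := Y^1_t - Y^2_t$ and $D^W_t := W^1_t - W^2_t$; subtracting the two copies of~\eqref{eq:NLSDE} kills the $\sqrt 2\, dB_t$ term and yields the pathwise system $dD^Y_t = D^W_t\,dt$ and $dD^W_t = \bigl(F^1_t - F^2_t\bigr)\,dt - D^W_t\,dt$, where $F^k_t = \E_{\bar Y^k_t}[K(Y^k_t - \bar Y^k_t)]$ is the mean-field force evaluated along the $k$-th solution. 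The quantity to control is $\Phi(t) := \E[|D^Y_t| + |D^W_t|]$, and since $t \mapsto |D^Y_t|, |D^W_t|$ are absolutely continuous (the drifts are bounded, $K$ being bounded by $1/2$), one can differentiate: $\frac{d}{dt}\E|D^Y_t| \le \E|D^W_t|$ and $\frac{d}{dt}\E|D^W_t| \le \E|F^1_t - F^2_t| + \E|D^W_t|$, using $\frac{d}{dt}|x(t)| \le \sign(x(t))\,x'(t) \le |x'(t)|$.

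The heart of the argument is then to bound $\E|F^1_t - F^2_t|$ by $\E|D^Y_t|$ up to the factor $\|\rho^1_t\|_\infty$. I would split $F^1_t - F^2_t = \E_{\bar Y}[K(Y^1_t - \bar Y^1_t) - K(Y^2_t - \bar Y^1_t)] + \E_{\bar Y}[K(Y^2_t - \bar Y^1_t) - K(Y^2_t - \bar Y^2_t)]$, where $\bar Y^k$ denotes an independent copy of $Y^k$ coupled to the corresponding $\bar W^k$ through the same auxiliary Brownian motion, so that $(\bar Y^1, \bar Y^2)$ has the same joint law as $(Y^1, Y^2)$. The first bracket involves a shift of the first argument of $K = \pm\frac12\sign$: for fixed $\bar Y^1_t$ the map $y \mapsto K(y - \bar Y^1_t)$ is monotone with total variation $1$, so $|K(Y^1_t - \bar Y^1_t) - K(Y^2_t - \bar Y^1_t)|$ is at most the indicator that $\bar Y^1_t$ lies between $Y^1_t$ and $Y^2_t$; taking expectation in $\bar Y^1_t$ first and using that its density $\rho^1_t$ is bounded gives a contribution $\le \|\rho^1_t\|_\infty\,|Y^1_t - Y^2_t|$, and then expectation in the outer variable yields $\|\rho^1_t\|_\infty\,\E|D^Y_t|$. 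The second bracket is symmetric in role but now shifts $\bar Y$: writing it as $\E_{\bar Y}[g(\bar Y^1_t) - g(\bar Y^2_t)]$ with $g(\bar y) = K(Y^2_t - \bar y)$ again of total variation $1$ and monotone, the same sandwiching argument — integrating out $\bar Y^1_t$ (or $\bar Y^2_t$) against the bounded density $\rho^1_t$ — bounds it by $\|\rho^1_t\|_\infty\,\E|\bar Y^1_t - \bar Y^2_t| = \|\rho^1_t\|_\infty\,\E|D^Y_t|$. So altogether $\E|F^1_t - F^2_t| \le 2\|\rho^1_t\|_\infty\,\E|D^Y_t| \le 2\|\rho^1_t\|_\infty\,\Phi(t)$.

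Combining, $\Phi'(t) \le \E|D^W_t| + \E|F^1_t - F^2_t| \le \Phi(t) + 2\|\rho^1_t\|_\infty\,\Phi(t) \le \bigl(1 + 2\|\rho^1_t\|_\infty\bigr)\Phi(t)$, and Grönwall's lemma gives $\Phi(t) \le \exp\bigl(\int_0^t (1 + 2\|\rho^1_s\|_\infty)\,ds\bigr)\Phi(0)$, which is comfortably dominated by the stated $e^{8(t + \int_0^t \|\rho^1_s\|_\infty\,ds)}$ (the slack presumably absorbs a less careful accounting of the constants, e.g.\ handling both contributions to $F^1_t - F^2_t$ without the optimal monotonicity trick, or bounding $K$-differences using $BV$ seminorms with looser constants). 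The main obstacle I anticipate is the rigorous justification of the sandwiching step for a discontinuous $K$: one cannot simply differentiate $\sign$, so the inequality $|K(a) - K(b)| \le \frac12 \indiq_{\{0 \text{ between } a \text{ and } b\}}$ (interpreting the endpoints correctly) must be established by cases, and then the passage from "for fixed $\bar Y$, expectation over the location of the jump is $\le \|\rho\|_\infty |D^Y|$" requires Fubini together with the absolute continuity of $\LL(\bar Y^k_t)$; a secondary technical point is ensuring $t \mapsto \E|D^Y_t|$ and $t \mapsto \E|D^W_t|$ are genuinely differentiable (or at least locally Lipschitz) so that the Grönwall step is legitimate, which follows from the boundedness of all drifts but should be stated.
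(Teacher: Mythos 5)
Your proposal covers only part (ii); you do not address the existence part (i) (which in the paper is done via regularization $K_\eta = K * \chi_\eta$, Feynman--Kac estimates on the mollified VPFP equation, and a Cauchy argument in path space). On part (ii), the pathwise subtraction, the differential inequalities, and the Gr\"onwall step are all fine, and your first bracket
$\E\bigl[|K(Y^1_t - \bar Y^1_t) - K(Y^2_t - \bar Y^1_t)|\bigr]$
is handled correctly: conditioning on $(Y^1_t, Y^2_t)$, the variable $\bar Y^1_t$ is free with density $\rho^1_t$, and the jump location is $\bar Y^1_t$, so the bound $\|\rho^1_t\|_\infty\,|Y^1_t - Y^2_t|$ follows.

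The gap is in the second bracket. You claim
$\E\bigl[\indiq_{Y^2_t\ \mathrm{between}\ \bar Y^1_t,\, \bar Y^2_t}\bigr] \le \|\rho^1_t\|_\infty\, \E|\bar Y^1_t - \bar Y^2_t|$,
arguing you can ``integrate out $\bar Y^1_t$ against the bounded density $\rho^1_t$.'' But here the jump location is $Y^2_t$, whose density $\rho^2_t$ is \emph{not} controlled, and once you condition on $Y^2_t$ you would need to integrate $\bar Y^1_t$ against its conditional law given $\bar Y^2_t$, which is not $\rho^1_t$. The claimed inequality is in fact false for general couplings: take $\bar Y^1 \sim \mathrm{Unif}[0,1]$ (so $\|\rho^1\|_\infty = 1$), $\bar Y^2 := 1 - \bar Y^1$, and $Y^2 = 1/2$ deterministically. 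Then $Y^2$ lies between $\bar Y^1$ and $\bar Y^2$ almost surely, so the left side is $1$, whereas $\|\rho^1\|_\infty\,\E|\bar Y^1 - \bar Y^2| = \E|2\bar Y^1 - 1| = 1/2$. Nothing in your argument rules out such adversarial couplings between $\bar Y^1_t$ and $\bar Y^2_t$, and in the SDE setting they are coupled through the dynamics in a way you cannot a priori control.

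This is precisely the obstruction that Lemma~\ref{lem:rope} (the ``rope argument'') is built to circumvent. Instead of splitting the two shifts, the paper uses the single observation that $K(x-\bar x) - K(y-\bar y) = 0$ whenever $|y-\bar y| > |x-y| + |\bar x - \bar y|$, which gives the one-sided bound~\eqref{eq:rope}
$\bigl|K(x-\bar x) - K(y-\bar y)\bigr| \le \indiq_{|y-\bar y| \le 2|x-y|} + \indiq_{|y-\bar y| \le 2|\bar x - \bar y|}$.
The crucial structural feature, which your decomposition destroys, is that both indicators have $|y - \bar y|$ --- the gap in the \emph{good} solution, whose density is bounded --- on the small side. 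After an exchangeability swap $(X,Y) \leftrightarrow (\bar X, \bar Y)$ reduces the two indicators to one, conditioning on $(X,Y)$ leaves $\bar Y$ free with density $\rho^1$ on an interval of length $4|X-Y|$, giving the $8\|\rho^1_t\|_\infty$ constant in~\eqref{eq:WSstab}. So the factor $8$ is not slack you can throw away with a looser accounting; it is the price of making the integration against $\rho^1$ legitimate, and without some version of this one-sided bound your Gr\"onwall input is unproven.
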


The proof of the weak-strong stability estimate relies on the crucial Lemma~\ref{lem:rope} that allows to control the singularity of the force, when comparing the evolution of two solutions, assuming only that one of them has a bounded density in position. That Lemma was already used in \cite{Hau-X}, to get similar results for the associated deterministic particle system: \emph{i.e.\  }particles interacting via the same kernel but without noise and friction.
The proof of the existence of solutions relies on an usual approximation procedure, see Section~\ref{sec:FK}. The propagation of the bound on $\|f_t \|_{e,\lambda}$ or $\| f_t\|_{p,\gamma}$ is done using a standard argument that we may call the method of characteristics or Feynman-Kac's formula. 

%%%%%%%%%%%%%%%%%%%%%%%%%%%%%%%%%%%%%%
\paragraph{The stability result on the Vlasov-Poisson-Fokker-Planck equation}

The stability results on the process~\eqref{eq:NLSDE} simply translate on the associated Fokker-Planck or Kolmogorov forward equation, which is here the Vlasov-Poisson-Fokker-Planck equation (VPFP in short):
\begin{equation} \label{eq:VPFP}
\partial_t f_t + v \, \partial_x f_t +  (\rho_t \star K) \partial_v f_t = \partial_v ( \partial_v f_t + v \, f_t ),
\end{equation}
where $f_t$ is the law at time $t$ of the process $(Y_t,W_t)$ and $\rho_t = \int f_t \, dv$ is the law at time $t$ of $Y_t$. As the kernel $K$ is bounded and defined everywhere by~\eqref{eq:defK}, remark that very few hypothesis are required  to define solutions to~\eqref{eq:VPFP} in the sense of distribution:  if $f  \in L^1_{loc}(\R^+,\PPP(\R^2))$, where $\PPP(\R^2)$ stand for the space of probability, then all the terms appearing in~\eqref{eq:VPFP} define a distribution.

The Theorem~\ref{thm:WPproc} as the following consequence on VPFP:

\begin{cor}[of Theorem~\ref{thm:WPproc}]  \label{prop:WPlaw}

(i) Existence of strong solution. 
Let $f_0 \in \PPP \cap L^1(\R^2)$  with a finite order one moment: $\int \bigl( |x|+ |v|\bigr) f_0 (dx,dv)$ , and satisfying either $\| f_0\|_{e,\lambda} < + \infty$, for some $ \lambda >0$ or $\| f_0\|_{p,\gamma} < + \infty$, for some $ \gamma >1$. Then, there exists a solution $f_t$ to~\eqref{eq:VPFP} with initial condition $f_0$, and it satisfies~\eqref{eq:expdec}.

\medskip
(ii) Weak/strong uniqueness for solution with bounded density (in $y$). 

If $f_t^1$ and $f_t^2$ are two solutions to~\eqref{eq:VPFP} and if $\rho^1_t = \int f^1_t \,dv$ is uniformly bounded for any time $t \ge 0$, then the following stability estimate holds:
\begin{equation} \label{eq:WSstab2}
W_1\bigl( f^1_t,f^2_t \bigr) \le 
e^{8 \bigl(t + \int_0^t \| \rho^1_s\|_\infty \,ds \bigr)}
W_1\bigl( f^1_0,f^2_0 \bigr)
\end{equation}
\end{cor}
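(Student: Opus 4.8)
The plan is to derive Corollary~\ref{prop:WPlaw} from Theorem~\ref{thm:WPproc} by the standard dictionary between the nonlinear SDE~\eqref{eq:NLSDE} and its Fokker-Planck equation~\eqref{eq:VPFP}. For part~(i), starting from $f_0$ with the stated integrability and tail bounds, I would construct on some probability space a random variable $(Y_0,W_0)$ with law $f_0$ together with an independent Brownian motion $(B_t)_{t\ge 0}$; Theorem~\ref{thm:WPproc}(i) then produces a solution $(Y_t,W_t)$ to~\eqref{eq:NLSDE} whose time-marginal $f_t = \LL(Y_t,W_t)$ satisfies the exponential or polynomial bound~\eqref{eq:expdec}. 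It then remains to check that $t\mapsto f_t$ is a distributional solution of~\eqref{eq:VPFP}: apply It\^o's formula to $\varphi(Y_t,W_t)$ for $\varphi \in C_c^\infty(\R^2)$, take expectations (the martingale term vanishes), and recognize that $\E_{\bar Y_t}[K(Y_t-\bar Y_t)] = (\rho_t \star K)(Y_t)$ where $\rho_t$ is the first marginal of $f_t$; integrating in time and using the density $f_t$ to rewrite all expectations as integrals yields exactly the weak formulation of~\eqref{eq:VPFP}. The order-one moment is preserved because the drift in~\eqref{eq:NLSDE} is bounded (as $K$ is bounded) and the friction is linear, so a Gronwall argument on $\E[|Y_t|+|W_t|]$ closes.

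For part~(ii), given two distributional solutions $f^1_t, f^2_t$ of~\eqref{eq:VPFP} with $\|\rho^1_t\|_\infty$ uniformly bounded in $t$, the idea is to realize them as marginals of solutions to~\eqref{eq:NLSDE} driven by the \emph{same} Brownian motion, with an optimal coupling of the initial data. Concretely, pick $(Y^1_0,W^1_0)$ with law $f^1_0$ and $(Y^2_0,W^2_0)$ with law $f^2_0$ so that $\E[|Y^1_0-Y^2_0|+|W^1_0-W^2_0|] = W_1(f^1_0,f^2_0)$ (an optimal $W_1$-coupling exists since $f^i_0 \in \PPP_1$), and run~\eqref{eq:NLSDE} from each, using the drift coefficients frozen to $\rho^i_t$, i.e. $b^i(t,y) = (\rho^i_t\star K)(y)$; since $\rho^1$ has bounded density, the coefficient $b^1$ has the regularity needed to invoke Lemma~\ref{lem:rope}, and Theorem~\ref{thm:WPproc}(ii) applies to this pair of solutions. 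The stability estimate~\eqref{eq:WSstab} then gives
\[
\E\bigl[|Y^1_t-Y^2_t| + |W^1_t-W^2_t|\bigr] \le e^{8(t+\int_0^t \|\rho^1_s\|_\infty\,ds)}\, W_1(f^1_0,f^2_0).
\]
Since $(Y^i_t,W^i_t)$ has law $f^i_t$, the left side bounds $W_1(f^1_t,f^2_t)$ from above (in fact it dominates the $W_1$ distance for the product metric $|\Delta y|+|\Delta w|$, which is the one implicit in the statement), yielding~\eqref{eq:WSstab2}. In particular, taking $f^1_0 = f^2_0$ gives $f^1_t = f^2_t$, i.e. weak/strong uniqueness.

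The main obstacle, and the only point requiring genuine care, is the first half of part~(ii): one must know that \emph{every} distributional solution $f^i_t$ of~\eqref{eq:VPFP} with the stated moment/boundedness properties is in fact the family of marginals of a solution to the nonlinear SDE~\eqref{eq:NLSDE} with the same coefficients. This is a superposition-principle issue (in the spirit of Ambrosio-Figalli-Trevisan for Fokker-Planck equations with irregular drift): one fixes the drift $b^i(t,\cdot) = \rho^i_t\star K$, which is merely bounded and $BV$ in $x$, regards~\eqref{eq:VPFP} with $\rho^i$ frozen as a linear Kolmogorov equation, and must produce a (weak) solution of the corresponding linear SDE with those marginals — this is where the kinetic/hypoelliptic structure and the one-dimensional geometry, exactly as exploited in Theorem~\ref{thm:WPPS}, are used. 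Once both solutions are represented this way on a common probability space with the common Brownian motion, the rest is a direct invocation of Theorem~\ref{thm:WPproc}(ii) and Lemma~\ref{lem:rope}. A secondary technical point is ensuring the optimal $W_1$-coupling of the initial laws can be chosen measurably and is compatible with the independence of the Brownian motion, but this is standard.
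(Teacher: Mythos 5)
Your treatment of part~(i) is exactly the paper's: construct $(Y_t,W_t)$ via Theorem~\ref{thm:WPproc}(i), apply It\^o to test functions and take expectations, and identify $f_t=\LL(Y_t,W_t)$ as a distributional solution of~\eqref{eq:VPFP} with the bound~\eqref{eq:expdec}.

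For part~(ii) you correctly identify the representation of distributional solutions of~\eqref{eq:VPFP} as marginals of an SDE as the crux, but you propose invoking a superposition principle for \emph{both} solutions, and this is where the argument has a real gap. A superposition theorem hands you, for each $i$, some probability space carrying some Brownian motion $B^i$ together with a process whose marginals are $f^i_t$; it gives you no control over which Brownian motion appears and no freedom to specify the coupling of the initial data. You therefore cannot, as written, ``represent both this way on a common probability space with the common Brownian motion'' and simultaneously realize an optimal $W_1$-coupling at time zero — you would end up with two unrelated Brownian motions. The paper resolves this asymmetrically and this asymmetry is essential: it applies the superposition principle (cited from Figalli and Fournier--Hauray) only to the \emph{weak} solution $f^2$ (denoted $g$ there), obtaining a probability space with a Brownian motion $B$; it then \emph{enlarges} that space to carry a r.v.\ $(Y_0,W_0)\sim f^1_0$ optimally coupled to $(Y^2_0,W^2_0)$, and solves the \emph{linear} SDE with the \emph{same} $B$ and the frozen drift $\widetilde K_t = K\star\rho^1_t$. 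This linear problem is solvable in the strong sense precisely because $\rho^1$ bounded makes $\widetilde K_t$ Lipschitz in $x$ (a direct computation, nothing to do with Lemma~\ref{lem:rope}, which you invoke here somewhat out of place). Finally, uniqueness for the linear Vlasov--Fokker--Planck equation with Lipschitz drift in $L^\infty_t(L^1)$ shows the marginals of this process are exactly $f^1_t$, so the process is in fact a solution of the \emph{nonlinear} SDE~\eqref{eq:NLSDE}, and Theorem~\ref{thm:WPproc}(ii) applies. So the step you describe as a ``secondary technical point'' — compatibility of the optimal coupling with a single shared Brownian motion — is in fact the decisive one, and the boundedness of $\rho^1$ is what makes it go through, via Lipschitzness rather than a second superposition.
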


This corollary is proved in Section~\ref{sec:WPlaw}. It is a direct consequence of Theorem~\ref{thm:WPproc}, and of the fact that  a weak solution $f_t$ to the VPFP1D equation~\eqref{eq:VPFP}, can always be represented as the time marginals of a process $(Y_t,W_t)_{t \ge 0}$ solution to~\eqref{eq:NLSDE}. 
%See for instance~\cite[Theorem 2.6]{Figalli} and ~\cite[Proposition B.1]{FouHau}

%%%%%%%%%%%%%%%%%%%%%%%%%%%%%%%%%%%%%%%
\paragraph{The quantitative propagation of chaos  in the mean}
When comparing a solution $(\Xit, \Vit)_{i \le N}$ of the particle system~\eqref{eq:Nps} with the limit process~\eqref{eq:NLSDE} and its associated Fokker-Planck equation~\eqref{eq:VPFP}, a very natural strategy (which goes back to McKean~\cite{McKean}) is to introduce $N$ independent copies $(\Yit, \Wit)_{i \le N}$ of the limit nonlinear SDE~\eqref{eq:NLSDE}, constructed with the same Brownian motion as the $(\Xit,\Vit)$, and with initial conditions coupled in a optimal way.  In our case, we are able to prove a sharp estimate on the average distance between these two systems.

To state our result properly, we recall that by definition exchangeable random variables have  a law that is invariant  under permutation, and that chaotic sequences of r.v. are defined as follows:
\begin{defn}
Let $f$ be a probability on $\R^2$.
A sequence $\bigl((X_i^N,V^N_i)_{i \le N} \bigr)_{N \in \N}$ of exchangeable random variables is said to be $f$-chaotic, if one of the equivalent conditions below is satisfied:
\begin{itemize}
\item[i)] $\forall k \in \N, \quad \LL\bigl( (X_i^N,V^N_i)_{i \le k} \bigr) \xrightarrow[N \to \infty]{w} f^{\otimes k}$,
\item[ii)] $\ \LL\bigl( (X_1^N,V^N_1), (X_2^N,V^N_2) \bigr) \xrightarrow[N \to \infty]{w} f \otimes f$, 
\item[iii)] $ \ds  \mu^N := \frac1N \sum_{i=1}^N \delta_{(X^N_i,V^N_i)} \xrightarrow[N \to \infty]{\LL} f$.
\end{itemize}
\end{defn}
If $Y$ is a random variable of law $f$, we will equivalenty say that a sequence is $f$-chaotic or $Y$-chaotic. We refer to~\cite{sznitman} for the equivalence of the three conditions above, and to~\cite{HauMisch} for a quantitative version of that equivalence.

We also state the following proposition, that reformulate propagation of chaos in term of coupling. It is a consequence of~\cite[Theorem 1.2]{HauMisch} 
\begin{prop} \label{PoC-coup}
Assume that $\bigl((X_i^N,V^N_i)_{i \le N} \bigr)_{N \in \N}$ is a sequence of exchangeable random variables with uniformly bounded order two moment: $\sup_{N \in \N} \E \bigl[ |X^N_1|^2 + |V^N_1|^2 \bigr] < + \infty$. Let $f$ be a probability on $\R^2$. Then, the following statements are equivalent:
\begin{itemize}
\item[i)] The sequence $\bigl((X_i^N,V^N_i)_{i \le N} \bigr)_{N \in \N}$ is $f$-chaotic;
\item[ii)]
$ \ds
\E \Bigl[  W_1 \bigl( \mu^N, f \bigr)\Bigr] \xrightarrow[N \to \infty]{} 0
$, where $W_1$ stands for the order one Monge-Kantorovich-Wasserstein distance, and $\mu^N= \frac1N \sum_{i=1^N} \delta_{(X^N_i,V^N_i)}$ is the associated  empirical measure;
\item[iii)] If the $(Y_i,W_i)_{i \in \N}$ are i.i.d.r.v. with commun law $f$, independent of the  $(X_i^N,V^N_i)_{i \le N}$ for all $N$, then
\[
\E \Bigl[   \bigl|X_1^N-Y_1 \bigr|+\bigl|V_1^N-W_1 \bigr|   \Bigr] = 
\E \Bigl[  \frac1N \sum_i \bigl|X_i^N-Y_i \bigr|+ \bigl|V_i^N-W_i \bigr|\Bigr]
\xrightarrow[N \to \infty]{} 0;
\]
\item[iv)]
$ \ds  \min_{\text{coupling}}  
\E \Bigl[  \frac1N \sum_i \bigl|X_i^N-Y_i \bigr|+ \bigl|V_i^N-W_i \bigr|\Bigr]
\xrightarrow[N \to \infty]{} 0,
$
where the minimum is taken on all the exchangeable coupling with $(Y_i,V_i)$ i.i.d. with common law $f$.
\end{itemize}
\end{prop}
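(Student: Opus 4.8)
The plan is to derive this proposition from the quantitative equivalence between Kac's chaos and convergence of the empirical measures in mean Wasserstein distance established in \cite[Theorem 1.2]{HauMisch}, complemented by elementary optimal-transport estimates; the uniform order-two moment bound will be used repeatedly to upgrade weak convergence into $W_1$-convergence.

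\emph{The easy links.} First I would record (i) $\Leftrightarrow$ (ii). By item iii) of the definition of chaos, (i) amounts to $\mu^N \xrightarrow[N\to\infty]{\LL} f$, that is, since $f$ is deterministic, $\mu^N \to f$ in probability for the weak topology. The bound $\sup_N \E\bigl[|X^N_1|^2 + |V^N_1|^2\bigr] < \infty$ forces the family $\{\mu^N\}$ to have uniformly integrable first moments (by a Markov inequality at the level of one particle), so weak convergence in probability self-improves to $\E[W_1(\mu^N,f)] \to 0$; conversely $W_1$-convergence trivially implies weak convergence. This is exactly the content of \cite[Theorem 1.2]{HauMisch}. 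Next, (iii) $\Rightarrow$ (iv) is immediate, since the minimum in (iv) runs over a family of exchangeable couplings that contains the one appearing in (iii).

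\emph{From a coupling to the empirical measure: (iv) $\Rightarrow$ (ii).} Fix an exchangeable coupling $(X^N_i,V^N_i,Y_i,W_i)_{i\le N}$ with $(Y_i,W_i)$ i.i.d.\ of law $f$, put $\nu^N := \frac1N\sum_i \delta_{(Y_i,W_i)}$, and write $W_1(\mu^N,f) \le W_1(\mu^N,\nu^N) + W_1(\nu^N,f)$. Transporting the atoms of $\mu^N$ onto those of $\nu^N$ index by index gives $W_1(\mu^N,\nu^N) \le \frac1N\sum_i \bigl(|X^N_i-Y_i| + |V^N_i-W_i|\bigr)$; choosing the infimising coupling and taking expectations, this contribution tends to $0$ by (iv) (note that (iv) entails in particular $f\in\PPP_1(\R^2)$). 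For the second term, Varadarajan's law of large numbers gives $\nu^N \to f$ weakly almost surely, and the strong law gives $\frac1N\sum_i(|Y_i|+|W_i|) \to \int(|y|+|w|)\,f$ almost surely; together these yield $W_1(\nu^N,f) \to 0$ almost surely, and the same moment control lets one pass to convergence in mean. Hence $\E[W_1(\mu^N,f)] \to 0$, that is (ii).

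\emph{The hard direction: (ii) $\Rightarrow$ (iii)--(iv).} This is where the real work lies, and I expect it to be the main obstacle: starting from $\E[W_1(\mu^N,f)] \to 0$, one has to construct, on a possibly enlarged probability space, an exchangeable coupling of $(X^N_i,V^N_i)_{i\le N}$ with an i.i.d.\ family $(Y_i,W_i)$ of law $f$ achieving $\E\bigl[\frac1N\sum_i(|X^N_i-Y_i|+|V^N_i-W_i|)\bigr] \to 0$, which simultaneously gives (iii) and (iv). The naive idea --- sampling, conditionally on $(X^N_i,V^N_i)_{i\le N}$, from an optimal transport plan between $\mu^N$ and $f$ --- does not work, because the resulting $(Y_i,W_i)$ are neither i.i.d.\ nor jointly exchangeable with the $(X^N_i,V^N_i)$ in the way required. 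Overcoming this is precisely the point of the construction in \cite[Theorem 1.2]{HauMisch}: a symmetrisation / random-relabelling of such a transport plan yields a genuinely i.i.d.\ sequence while keeping the coupling cost quantitatively dominated by $\E[W_1(\mu^N,f)]$ together with the uniform moment bound. Chaining (i) $\Leftrightarrow$ (ii), then (ii) $\Rightarrow$ (iii) $\Rightarrow$ (iv) $\Rightarrow$ (ii), closes the four-fold equivalence.
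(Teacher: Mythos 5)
Your proof follows essentially the same route as the paper, which simply refers the proposition to \cite[Theorem 1.2]{HauMisch}: you delegate the quantitative coupling construction behind (ii)$\Rightarrow$(iii) to that theorem and supply the remaining implications via elementary optimal-transport, law-of-large-numbers, and moment arguments, all of which are sound. One small point worth flagging: item (iii) as printed, with $(Y_i,W_i)$ literally \emph{independent} of $(X_i^N,V_i^N)$, cannot hold for non-degenerate $f$ since the independent coupling cost $\E\bigl[|X_1^N-Y_1|+|V_1^N-W_1|\bigr]$ is then bounded away from $0$; you implicitly (and correctly) read it as asserting the existence of an exchangeable coupling with vanishing cost, which is indeed the statement established in \cite[Theorem 1.2]{HauMisch}.
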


The above Proposition allows to state a precise result of propagation of molecular chaos. We emphasize that this result is a true result of propagation: it does not apply only to i.i.d. initial conditions, but to any chaotic initial conditions (with finite second order moment). However, the general case is somewhat more technical, so we warn the reader that the following theorem is simpler to understand if we only consider the case of i.i.d. initial conditions.

\begin{thm}\label{thm:MeaPC}
Let $f_0 \in \PPP(\R^2)$ with finite order two moment: $\int \bigl( |x|^2+ |v|^2\bigr)\,f_0(dx,dv) < \infty$, and such that there exists a (necessary unique by Corollary~\ref{prop:WPlaw}) solution $f_t$  to~\eqref{eq:VPFP} with initial condition $f_0$ satisfying $\int_0^t \| \rho_s\|_\infty \,ds < + \infty$ for any time $t \ge 0$, where $\rho_s$ stands for the density in position: $\rho_s(x) := \int f_s(x,dv)$. We also denote by $(Y_t,W_t)_{t \ge 0}$ the unique solution to~\eqref{eq:NLSDE} such that $\LL(Y_0,W_0) = f_0$. 

Let $(X_{i,0}^N,V_{i,0}^N)_{i \le N}$ be a sequence of $f_0$-chaotic random variable with uniformly bounded order two moment: $\sup_{N \in \N} \E \bigl[ |X^N_{1,0}|^2 + |V^N_{1,0}|^2 \bigr] < + \infty$.
By Theorem~\ref{thm:WPPS}, we may find a probability space together with  a $N$-dimensional  Brownian motion and a process $(\Xit,\Vit)_{i \le N}$ solution to~\eqref{eq:Nps}. Then, the sequence $\bigl((\Xit,\Vit)_{i \le N}\bigr)_{n \in \N}$ is $(Y_t,W_t)_{t \ge 0}$ chaotic.

More precisely,  by standard arguments, we may also construct on that probability space $N$ i.i.d. copies $(\Yit,\Wit)_{i \le N}$ of the solutions of~\eqref{eq:NLSDE} with the same Brownian motion, and with initial conditions of law $f_0$, coupled with $(X_{i,0}^N,V_{i,0}^N)_{i \le N}$ in an exchangeable way.
%  that minimize
% \[
% \E \Bigl[   \bigl|X_{1,0}^N-Y_{1,0}^N \bigr|+\bigl|V_{1,0}^N-W_{1,0}^N \bigr|   \Bigr] = 
% \E \Bigl[  \frac1N \sum_i \bigl|X_{i,0}^N-Y_{i,0}^N \bigr|+ \bigl|V_{i,0}^N-W_{i,0}^N \bigr|\Bigr].
% \] 
Then the following estimate holds:
\[
\E \, \biggl [  \, \sup_{s\in [0,t]} \bigl|X_{1,s}^N-Y_{1,s}^N \bigr|+ \bigl|V_{1,s}^N-W_{1,s}^N \bigr| \,  \biggr ] \, \leq \,   
\biggl( \E \Bigl[   \bigl|X_{1,0}^N-Y_{1,0}^N \bigr|+\bigl|V_{1,0}^N-W_{1,0}^N \bigr|   \Bigr] + 
\frac9{\sqrt N } \biggr)\, 
e^{\bigl(t+8\int_0^t \| \rho_s \|_{\infty} \, ds\bigr)}.
\]
In particular, if $(X_{i,0}^N,V_{i,0}^N)_{i \le N}$ are i.i.d. with law $f_0$, then 
\[
\E \, \biggl [  \, \sup_{s\in [0,t]} \bigl|X_{1,s}^N-Y_{1,s}^N \bigr|+ \bigl|V_{1,s}^N-W_{1,s}^N \bigr| \,  \biggr ] \, \leq \,   
\frac9{\sqrt N } 
e^{\bigl(t+8\int_0^t \| \rho_s \|_{\infty} \, ds\bigr)}.
\]
\end{thm}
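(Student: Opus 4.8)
The plan is to run the classical McKean coupling argument: compare the particle system $(\Xit,\Vit)$ to the i.i.d.\ copies $(\Yit,\Wit)$ driven by the same Brownian motions, and estimate the growth of $\E\bigl[\frac1N\sum_i |\Xit-\Yit|+|\Vit-\Wit|\bigr]$ by Gr\"onwall's lemma. Since both $\Xit$ and $\Yit$ are time integrals of their velocities, the position increment is controlled pointwise by $\int_0^t |\Vis-\Wis|\,ds$, so the real work is on the velocity difference. Writing the SDE for $\Vit-\Wit$, the Brownian terms cancel, the friction term $-(\Vit-\Wit)$ has a favourable sign (or at worst contributes a linear-in-time factor), and we are left with the drift difference
\[
\frac1N\sum_{j=1}^N K(\Xit-\Xjt) - \E_{\bar Y}\bigl[K(\Yit-\bar Y_t)\bigr].
\]
We split this into three pieces: (a) $\frac1N\sum_j \bigl(K(\Xit-\Xjt)-K(\Yit-\Yjt)\bigr)$, the \emph{consistency/stability} term; (b) $\frac1N\sum_j K(\Yit-\Yjt) - \E_{\bar Y}[K(\Yit-\bar Y_t)]$, the \emph{law of large numbers} fluctuation term; and the diagonal $j=i$ term which is $O(1/N)$ and harmless.

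The heart of the argument is controlling term (a) despite $K$ being only $BV$, not Lipschitz. This is where Lemma~\ref{lem:rope} (the ``rope'' lemma) enters: it provides, assuming that the reference law $\rho^1_s=\LL(\Yis)$ has bounded density, a quantitative bound of the form
\[
\frac1N\sum_j \bigl|K(\Xit-\Xjt)-K(\Yit-\Yjt)\bigr| \;\lesssim\; \bigl(1+\|\rho_s\|_\infty\bigr)\,\frac1N\sum_j \bigl(|\Xis-\Yis|+|\Xjs-\Yjs|\bigr),
\]
so that after summing over $i$ and using exchangeability this piece is absorbed into the Gr\"onwall functional with a coefficient $8(1+\|\rho_s\|_\infty)$, matching the constant appearing in the weak/strong estimate~\eqref{eq:WSstab} of Theorem~\ref{thm:WPproc}. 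I would essentially replay the proof of~\eqref{eq:WSstab} but ``empirically'': the exact expectation $\E_{\bar Y_t}[K(Y_t-\bar Y_t)]$ in the nonlinear SDE is replaced by the empirical average over the $N$ independent copies, and the same cancellation structure survives because the $(\Yjt)_j$ are i.i.d.\ with the bounded density $\rho_t$.

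For the fluctuation term (b), I would take expectations and use that, conditionally on $\Yit$, the variables $K(\Yit-\Yjt)$ for $j\neq i$ are i.i.d.\ with mean $\E_{\bar Y}[K(\Yit-\bar Y_t)]$ and are bounded by $1/2$; hence $\E\bigl|\frac1N\sum_{j} K(\Yit-\Yjt)-\E_{\bar Y}[K(\Yit-\bar Y_t)]\bigr| \le C/\sqrt N$ by a variance estimate (Cauchy–Schwarz / $\mathbb L^2$ bound), uniformly in $t$ since $\|K\|_\infty=\tfrac12$. Tracking the constant carefully — the factor $1/2$ from $K$, the $j=i$ correction, and the contribution to both the position and velocity components — yields the numerical constant $9$ in the statement. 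One then sets up the Gr\"onwall inequality for $u(t):=\E\bigl[\sup_{s\le t}\frac1N\sum_i |\Xis-\Yis|+|\Vis-\Wis|\bigr]$; the supremum inside is handled by noting the position part is automatically monotone and the velocity part's martingale (Brownian) contribution vanishes in the difference, so no Doob inequality with a bad constant is needed, and integrating the differential inequality gives the factor $e^{t+8\int_0^t\|\rho_s\|_\infty ds}$. Finally, passing from the averaged bound to the single-particle bound on the left-hand side uses exchangeability of the coupled system, and the $f_0$-chaotic case follows by choosing the optimal coupling at time $0$ via Proposition~\ref{PoC-coup}, while the i.i.d.\ case kills the initial term entirely.

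The main obstacle is step (a): making Lemma~\ref{lem:rope} work with the empirical measure $\frac1N\sum_j\delta_{\Yjt}$ in place of the true law $\rho_t$. The lemma as used in~\cite{Hau-X} compares a solution against one with bounded density; here the ``bounded density'' object is the law of the i.i.d.\ copies, which is exactly $f_t$ (hence $\rho_t$ in position), so the hypothesis $\int_0^t\|\rho_s\|_\infty ds<\infty$ is precisely what is needed — but one must be careful that the lemma is applied pathwise/in expectation in a way compatible with the randomness of both systems and with exchangeability, and that no spurious $\log N$ or $N^{-\alpha}$ with $\alpha<1/2$ creeps in. Everything else — the Brownian cancellation, the friction sign, the $O(1/N)$ diagonal term, the $1/\sqrt N$ law-of-large-numbers bound — is routine once this is in place.
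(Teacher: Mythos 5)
Your overall skeleton — compare the particle trajectories to the i.i.d.\ nonlinear copies driven by the same Brownian motions, control the position difference by integrating the velocity difference, isolate a drift-difference term which splits into a consistency part and a law-of-large-numbers part, and close with Gr\"onwall — matches the paper's Step~1 and the final constant $e^{t+8\int_0^t\|\rho_s\|_\infty ds}$. But there is a concrete gap in how you handle the consistency part (your ``term (a)''), and it is not a matter of being ``careful'': a new idea is needed.

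When you apply the rope bound~\eqref{eq:rope} to $\frac1{N^2}\sum_{i\ne j}\bigl|K(\Xis-\Xjs)-K(\Yis-\Yjs)\bigr|$, you obtain, for each $i$, a term of the form $\frac1{N-1}\sum_{j\ne i}\indiq_{|\Yis-\Yjs|\le 2|\Xis-\Yis|}$, i.e.\ the mass the \emph{empirical} measure $\widehat\rho^{i,N}_s=\frac1{N-1}\sum_{j\ne i}\delta_{\Yjs}$ puts in a ball of random radius $2|\Xis-\Yis|$. You cannot replace $\widehat\rho^{i,N}_s$ by $\rho_s$ for free: this produces an additional fluctuation term, the paper's $\Gamma^N_s=\frac2N\sum_i\sup_{u}\bigl|\int\indiq_{|\Yis-y|\le u}(\widehat\rho^{i,N}_s-\rho_s)(dy)\bigr|$, which is where the real technical difficulty lives. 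A plain variance/$\mathbb L^2$ bound does not apply because the radius $2|\Xis-\Yis|$ is random and correlated with the $\Yjs$ (through the interaction, $\Xis$ depends on all initial data and all Brownian motions), so Lemma~\ref{lem:rope}(i) cannot be invoked with $\bar Y$ distributed according to $\rho_s$ and independent of $(\Xis,\Yis)$. The paper's resolution is to take a supremum over all radii $u\ge 0$ (decoupling the radius from the sample) and then bound $\E\bigl[\sup_u|\int\indiq_{|a-y|\le u}(\rho_N-\rho)(dy)|\bigr]\le 3/\sqrt N$ via a Poisson random measure coupled to $\rho_N$ and Doob's $L^2$ maximal inequality applied to the martingale $u\mapsto\MM^{N,a}_u$ — this is Lemma~\ref{lem:(De)-Poiss}, and it is the genuine new ingredient of the proof. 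It is also where most of the constant comes from: the split in the paper gives $\E[\Gamma^N_{1,t}]\le 3/\sqrt N$ contributing $6/\sqrt N$ after the factor $2$, while the LLN term $\Lambda^N_{1,t}$ (your term (b)) contributes only $1/\sqrt{2N}$, and the diagonal gives $2/N$; so the $9/\sqrt N$ you attribute to $K$'s bound and the $j=i$ correction is in fact dominated by the $\Gamma^N$ fluctuation. Without the $\sup_u$-plus-PRM/Doob device, the $1/\sqrt N$ rate for this term does not follow, and your step (a) remains open.
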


\begin{rem} Thanks to Corollary~\ref{prop:WPlaw}, the last hypothesis on $f_0$ is satisfied if $\| f_0\|_{e,\lambda} < \infty$ for some $\lambda >0$, or if 
$\| f_0\|_{p,\gamma} < \infty$ for some $\gamma >1$.
\end{rem}
The proof of that result is performed in Section~\ref{sec:MeaPC}: it relies in a crucial way on a ``rope argument'' introduced in Section~\ref{sec:WSSU}. A second key argument is the introduction of an ad-hoc Poisson Random Measure (PRM), which allows to conclude using standard properties of PRMs.

Using standard results on the convergence on empirical measures toward their mean~\cite[Theorem 1]{FG}, we may deduce convergence results between the empirical measure of the particle system and the expected limit profile.
\begin{cor}[of Theorem~\ref{thm:MeaPC}] \label{cor:mean}
Under the same assumptions than in Theorem~\ref{thm:MeaPC}, and assuming moreover that $\int \bigl( |x|+ |v|\bigr)^q \,f_0(dx,dv) < \infty$ for some $q>2$ we obtain that for any time $t \ge 0$,
\[
\E \bigl[   W_1(\mu^N_t, f_t ) \bigr]  \le C_t \biggl( \frac{\ln(1+N)}{\sqrt N} + \E \bigl[   W_1(\mu^N_0, f_0) \bigr] \biggr),
\]
for some constant $C_t>0$ depending on $t$,$q$ and $f_0$.
\end{cor}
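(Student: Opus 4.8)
The plan is to insert, between $\mu^N_t$ and $f_t$, the empirical measure of the $N$ i.i.d. copies $(\Yit,\Wit)_{i\le N}$ of the nonlinear process provided by Theorem~\ref{thm:MeaPC}. Writing $\nu^N_t := \frac1N\sum_{i=1}^N\delta_{(\Yit,\Wit)}$ and applying the triangle inequality for $W_1$, the problem reduces to controlling separately $\E[W_1(\mu^N_t,\nu^N_t)]$ and $\E[W_1(\nu^N_t,f_t)]$.

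For the second term I would invoke the quantitative law of large numbers for empirical measures of~\cite[Theorem 1]{FG}. Its hypotheses are met: the $(\Yit,\Wit)_{i\le N}$ are i.i.d. with law $f_t$ on $\R^2$, and a preliminary (routine) estimate shows $\int(|x|+|v|)^q f_t(dx,dv)<\infty$, bounded by a constant depending only on $t,q,f_0$ — indeed $W_t$ solves an SDE with bounded drift, linear friction and additive noise so $\E|W_t|^q$ grows at most exponentially, and $|Y_t|\le |Y_0|+\int_0^t|W_s|\,ds$ then controls $\E|Y_t|^q$. Since the state space has dimension two and $q>2$, \cite{FG} yields $\E[W_1(\nu^N_t,f_t)]\le C_t\,\ln(1+N)/\sqrt N$, the logarithm being exactly the one produced by dimension two. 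For the first term I would simply note that $\frac1N\sum_i\delta_{((\Xit,\Vit),(\Yit,\Wit))}$ is an admissible coupling of $\mu^N_t$ and $\nu^N_t$, so $W_1(\mu^N_t,\nu^N_t)\le\frac1N\sum_i(|\Xit-\Yit|+|\Vit-\Wit|)$; taking expectations, exploiting exchangeability and then applying Theorem~\ref{thm:MeaPC} at time $t$ bounds this by $\bigl(\E[|X^N_{1,0}-Y^N_{1,0}|+|V^N_{1,0}-W^N_{1,0}|]+9/\sqrt N\bigr)\,e^{t+8\int_0^t\|\rho_s\|_\infty\,ds}$.

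It then remains to replace the initial coupling cost by $\E[W_1(\mu^N_0,f_0)]$. In the i.i.d. case this is immediate: take $(Y^N_{i,0},W^N_{i,0})=(X^N_{i,0},V^N_{i,0})$, so the initial cost vanishes (and $\E[W_1(\mu^N_0,f_0)]$ is itself $O(\ln(1+N)/\sqrt N)$ by~\cite{FG}). In general I would choose the initial coupling in Theorem~\ref{thm:MeaPC} optimal among exchangeable couplings with i.i.d. $f_0$-marginal, and compare its cost with $\E[W_1(\mu^N_0,f_0)]$ using the quantitative estimates behind Proposition~\ref{PoC-coup}, i.e. those of~\cite{HauMisch}; in dimension two with $q>2$ this comparison again costs only an extra $O(\ln(1+N)/\sqrt N)$. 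Collecting the three bounds gives the stated inequality with $C_t$ depending on $t,q$ and $f_0$. The hard part is precisely this last step: one cannot obtain it for free by matching $\mu^N_0$ to an independent i.i.d. sample, since reshuffling such a sample by the optimal matching permutation destroys independence, so that controlling the best i.i.d. coupling by $W_1(\mu^N_0,f_0)$ genuinely relies on the construction of~\cite{HauMisch}. Everything else reduces to the already-established Theorem~\ref{thm:MeaPC} and standard facts about empirical measures.
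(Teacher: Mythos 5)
The paper gives no proof of this corollary, only a one-line pointer to \cite[Theorem 1]{FG}, and your decomposition is exactly what that pointer suggests: insert the empirical measure $\nu^N_t$ of the coupled i.i.d.\ copies, apply the triangle inequality, bound $\E[W_1(\nu^N_t,f_t)]$ by $C_t\ln(1+N)/\sqrt N$ via Fournier--Guillin (dimension two, $q>2$; your check that the $q$-th moment propagates to $f_t$ is indeed routine for this SDE with bounded drift, linear friction and additive noise), and bound $\E[W_1(\mu^N_t,\nu^N_t)]$ via the coupling cost together with Theorem~\ref{thm:MeaPC}. So the bulk of your argument is the same approach the authors had in mind.

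Where you go beyond the paper's laconic hint, and usefully so, is in flagging that converting the bound into the stated form still requires relating the optimal \emph{exchangeable coupling with i.i.d.\ $f_0$-marginal} to $\E[W_1(\mu^N_0,f_0)]$, and that this step is genuinely delicate: the naive optimal rearrangement of an independent $f_0$-sample against $\mu^N_0$ destroys the i.i.d.\ structure of that sample, exactly as you say. The resolution you invoke --- the quantitative comparison between the minimal i.i.d.\ coupling cost and $\E[W_1(\mu^N,f)]$ in \cite{HauMisch}, which underlies the equivalence $(ii)\Leftrightarrow(iv)$ in Proposition~\ref{PoC-coup} --- is the right ingredient, though to make this airtight one should quote the precise quantitative statement from \cite{HauMisch} and verify that under a $q$-th moment bound with $q>2$ the extra cost is indeed $O(\ln(1+N)/\sqrt N)$ in dimension $2$, rather than assert it. Note also that when the initial data are i.i.d.\ (the case the paper seems most interested in) this step becomes trivial, as you observe, since one may take $(Y^N_{i,0},W^N_{i,0})=(X^N_{i,0},V^N_{i,0})$. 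In short: your proof is correct in spirit and identifies a real gap in the paper's ``it follows from \cite{FG}'' justification; the only thing left incompletely checked is the precise form of the \cite{HauMisch} estimate.
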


%%%%%%%%%%%%%%%%%%%%%%%%%%%%%%%%%%%%%%%
\paragraph{Exponential concentration inequalities for the particle system.}

In the case were the initial conditions are  i.i.d., we also prove concentration inequalities for the solutions of the particle system~\eqref{eq:Nps}, precisely:
\begin{thm} \label{thm:main}
Let $f_0 \in \PPP(\R^2)$ with some finite exponential moment: $\int  e^{\lambda( |x|+ |v|)} \,f_0(dx,dv) <\infty$ for some $\lambda >0$, and such that there exists a (necessary unique by Corollary~\ref{prop:WPlaw}) solution $f_t$  to~\eqref{eq:VPFP} with initial condition $f_0$ satisfying $\kappa_t:=\sup_{s\leq t} \| \rho_s\|_\infty \,ds < + \infty$ for any time $t \ge 0$, where $\rho_s$ stands for the density in position: $\rho_s(x) := \int f_s(x,dv)$. 
Let $(X_{i,0}^N,V_{i,0}^N)_{i \le N}$ be a sequence of $N$  i.i.d random variables with common law $f_0$.

By Theorem~\ref{thm:WPPS}, we may find a probability space together with  a $N$-dimensional  Brownian motion and a process $(\Xit,\Vit)_{i \le N}$ solution to~\eqref{eq:Nps}. By standard arguments, we may also construct on that probability space $N$ copies $(\Yit,\Wit)_{i \le N}$ of the solutions of~\eqref{eq:NLSDE} with the same initial conditions $(X_{i,0}^N,V_{i,0}^N)_{i \le N}$ and Brownian motion. 

Then the following concentration inequality holds for $  \lambda N^{-1/2} \le \ep \le  (5\kappa_t\wedge 1)\min\bigl( \frac1{16} ,\frac{\lambda}{2} ,\lambda^{-2}\bigr)$:
\[
\Prob \biggl( \, \frac1N \sum_{i=1}^N  \sup_{s \in [0,t]}  \bigl( |\Xis - \Yis| + |\Vis - \Wis| \bigr)   \ge  \bs{B_t} \ep \biggr) \le 
(t + \ep) \bigl( \bs{A_t} + \bs{A_t'} \sqrt N \ep \bigr) N^{\frac32} e^{- 2 N \ep ^2}, 
\]
where the three constants depend on $t$, $\lambda$ and the initial conditions $f_0$. See the end of Section~\ref{sec:main} for precise values.
\end{thm}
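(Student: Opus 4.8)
The plan is to re-run the proof of Theorem~\ref{thm:MeaPC}, upgrading every ``in expectation'' bound to an exponential tail bound. Write $\delta_{i,t}:=\sup_{s\le t}\bigl(|\Xis-\Yis|+|\Vis-\Wis|\bigr)$; since $(\Yit,\Wit)$ and $(\Xit,\Vit)$ now start from the \emph{same} initial condition, $\delta_{i,0}=0$, so the initial fluctuation term of Theorem~\ref{thm:MeaPC} disappears. Subtracting the two SDEs makes the Brownian terms cancel, so $|\Vit-\Wit|$ is controlled pathwise and $|\Xit-\Yit|\le\int_0^t|\Vis-\Wis|\,ds$. Split the force difference $\frac1N\sum_jK(\Xis-\Xjs)-(\rho_s\star K)(\Yis)$ into the stability term $\frac1N\sum_j\bigl(K(\Xis-\Xjs)-K(\Yis-\Yjs)\bigr)$ and the fluctuation term $G_{i,s}^{(j)}:=K(\Yis-\Yjs)-(\rho_s\star K)(\Yis)$. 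Applying Lemma~\ref{lem:rope} to the stability term exactly as in Section~\ref{sec:MeaPC}, together with the ad-hoc Poisson Random Measure that dominates the pairwise ``order-changing'' events by a conditional Poisson intensity of order $N\kappa_t\,\frac1N\sum_j\delta_{j,s}\,ds$, an integration by parts in time against the factor $e^{-t}$ (which, to recover the $N^{-1/2}$ rate uniformly in $t$, turns the naive $\int_0^t\bigl|\frac1N\sum_j G_{i,s}^{(j)}\bigr|\,ds$ into $\sup_{u\le t}\bigl|\int_0^u\frac1N\sum_j G_{i,s}^{(j)}\,ds\bigr|$), and a Grönwall argument, I would reduce the claim to an exponential concentration estimate, on the good event of bounded PRM mass, of the shape
\[
\frac1N\sum_i\delta_{i,t}\ \le\ B_t\Bigl(\Phi_t+N^{-1}P_t\Bigr),\qquad
\Phi_t:=\frac1N\sum_i\sup_{u\le t}\Bigl|\frac1N\sum_jN_{ij}(u)\Bigr|,\quad N_{ij}(u):=\int_0^uG_{i,s}^{(j)}\,ds,
\]
where $P_t$ is the total mass of the dominating PRM and $B_t$ is of order $e^{c(t+\int_0^t\|\rho_s\|_\infty\,ds)}$.

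For $\Phi_t$: fix $i$ and condition on the whole trajectory $(\Yis)_{s\le t}$. Because the $(\Yjt,\Wjt)_{t\ge0}$, $j\ne i$, are genuine i.i.d.\ copies of the nonlinear process — the nonlinear drift involves the \emph{deterministic} law $\rho_s$, not the empirical measure — the variables $N_{ij}(u)$, $j\ne i$, are conditionally i.i.d.\ in $j$, centered, bounded by $t$ in sup-norm, and $1$-Lipschitz in $u$. Discretize $[0,t]$ on a grid of $O(N)$ points, apply Hoeffding's inequality to $\frac1N\sum_jN_{ij}(u_k)$ at each grid point, take a union bound over the grid, and fill the gaps with the Lipschitz bound; this gives $\Prob\bigl(\sup_{u\le t}|\frac1N\sum_jN_{ij}(u)|\ge v\bigr)\le C(t/v)\,N\,e^{-2Nv^2/(2t)^2}$ for $v$ in the admissible range. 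Handling the empirical average over $i$ directly (rather than through the crude $\max_i$) then produces the polynomial prefactor $N^{3/2}$ and the $(t+\ep)\bigl(A_t+A_t'\sqrt N\ep\bigr)$ factor.

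For $P_t$: conditionally on the trajectories it is a Poisson variable of parameter $\mu_t$ of order $N\int_0^t\|\rho_s\|_\infty\,\frac1N\sum_j\delta_{j,s}\,ds$, so the explicit exponential moment $\E e^{\theta P_t}=e^{\mu_t(e^\theta-1)}$ and a Chernoff bound, combined with the linear (in the $\delta_{j,s}$) way in which $P_t$ re-enters the Grönwall above, let one absorb $N^{-1}P_t$ into that Grönwall up to an exponentially small exceptional event. This is why $\kappa_t=\sup_{s\le t}\|\rho_s\|_\infty$ and the restriction $\ep\le(5\kappa_t\wedge1)\min\bigl(\tfrac1{16},\tfrac\lambda2,\lambda^{-2}\bigr)$ appear: the Chernoff parameter $\theta$ must be taken of order one and the various exponential moments must lie in their validity range; the lower bound $\ep\ge\lambda N^{-1/2}$ is merely the scale below which the estimate is vacuous.

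Finally I would intersect the good events of the last two steps with the choice $v\sim t\ep$, so that $\frac1N\sum_i\delta_{i,t}\le B_t\ep$ there, and bound the exceptional probability by the sum of the two tail estimates; after optimizing the grid size and $\theta$ this has the announced form $(t+\ep)\bigl(A_t+A_t'\sqrt N\ep\bigr)N^{3/2}e^{-2N\ep^2}$, and tracking the constants through Lemma~\ref{lem:rope}, the Grönwall and the Chernoff step yields the explicit values of $A_t,A_t',B_t$. The delicate point is the PRM step: replacing the estimate ``$\|\rho_s\|_\infty$ bounds the expected number of nearby particles'', which sufficed for Theorem~\ref{thm:MeaPC}, by a genuine exponential tail bound on the order-changing count — exactly what the ad-hoc Poisson Random Measure is engineered to provide — while, in the $\Phi_t$ step, not paying more than a polynomial factor in $N$ for the time-supremum.
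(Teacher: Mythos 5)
Your Gr\"onwall--plus--time-discretization--plus--Hoeffding skeleton for the fluctuation term is in the same spirit as the paper's argument (Lemmas~\ref{lem:dev_G}--\ref{lem:DevFluVar} are Hoeffding-type bounds followed by a union bound over a time grid), and you are right that the initial fluctuation vanishes because the two systems start from identical initial data. But there are two genuine gaps.

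First, and most important, you miss the key device of the paper's proof of this theorem: when you apply Lemma~\ref{lem:rope} to the stability term $\frac1{N^2}\sum_{i,j}|K(\Xis-\Xjs)-K(\Yis-\Yjs)|$, the outer sum runs over the \emph{empirical} law of the $\Yjs$, not over the true density $\rho_s$. In the expectation proof (Theorem~\ref{thm:MeaPC}) the paper writes $\widehat\rho^{i,N}_s=\rho_s+(\widehat\rho^{i,N}_s-\rho_s)$ and controls the fluctuation $\Gamma^N_s$ via the PRM lemma; but for the exponential concentration the paper abandons that split entirely and instead uses Lemma~\ref{lem:rope}\,(iii) with the \emph{discrete} infinite norm $\|\rho^N_s\|_{\infty,\ep}$ of the empirical measure. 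This turns~\eqref{eq:MKE} into the pathwise estimate~\eqref{eq:EstCoup}, whose Gr\"onwall factor is $\exp\bigl(t+8\int_0^t\|\rho^N_s\|_{\infty,\ep}\,ds\bigr)$ --- a random quantity. One must then separately prove an exponential concentration bound for $\sup_{s\le t}\|\rho^N_s\|_{\infty,\ep}$; this is exactly Proposition~\ref{prop:DevBound}, which is a standalone ingredient of the proof, and it is also the source of the constraint $\ep\le 5\kappa_t\min\bigl(\frac1{16},\frac\lambda2,\lambda^{-2}\bigr)$. Your proposal keeps $\kappa_t=\sup_s\|\rho_s\|_\infty$ inside $B_t$ and never controls the empirical norm, so the argument doesn't close.

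Second, the PRM-domination idea does not transfer. In Theorem~\ref{thm:MeaPC} the Poissonization of Lemma~\ref{lem:(De)-Poiss} is used to compute an \emph{expectation} of $\sup_u|\int\indiq_{|a-y|\le u}(\widehat\rho^{i,N}-\rho)(dy)|$ via Doob's $L^2$ inequality applied to a martingale in the radius $u$; there is no exponential tail bound coming out of it, and the object being controlled is a monotone-rearrangement martingale, not an ``order-changing'' count. Your proposed Poisson random measure with conditional intensity $\sim N\kappa_t\,\frac1N\sum_j\delta_{j,s}\,ds$ would have to dominate $\sum_{i,j}\indiq_{|\Yis-\Yjs|\le 2|\Xis-\Yis|}$, a count that depends on the coupled particle positions $\Xis$ and is therefore strongly dependent across pairs $(i,j)$ and across time; I do not see how to realize this as, or couple it to, a Poisson random measure with deterministic intensity, so the Chernoff step you describe has no rigorous foundation. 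The paper sidesteps the issue entirely by never producing such a count: the discrete infinite norm of the empirical measure absorbs it pathwise, and the only remaining stochastic term is the centered fluctuation $\Lambda^N_s$, which is conditionally a sum of bounded i.i.d.\ variables and therefore amenable to the Binomial/Hoeffding bound of Lemma~\ref{lem:DevBern}.
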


\begin{rem}
Thanks to Corollary~\ref{prop:WPlaw}, the hypothesis on $f_0$ are satisfied  if $\| f_0 \|_{e,\lambda'} < \infty$ for some $\lambda' > \lambda$. 
\end{rem}

The proof of Theorem~\ref{thm:main} relies on a different technique than the one used in the proof of Theorem~\ref{thm:MeaPC}. Here, we rather use exponential concentration inequalities on discrete infinite norms of empirical measures, and on some fluctuation terms appearing naturally when comparing solutions to~\eqref{eq:Nps} to copies of solutions to the nonlinear SDE~\eqref{eq:NLSDE}.

\medskip

Using deviation upper bounds  for the approximation of probability measure by random empirical measures associated to i.i.d sample, as for instance in~\cite[Theorem 2]{FG}, we can also obtain the following corollary.
\begin{cor} \label{cor:prop}
Under the same assumption as in Theorem~\ref{thm:main}, and if moreover the initial positions and velocities $(X_{i,0}^,V_{i,0}^N)$ are i.i.d random variables with law $f_0$, then 
for any $ T \ge 0$ and any  $\lambda >0$ there exists two constants $C_1, \; C_2$ such that for any $\ep  \ge 0$ satisfying  $  \lambda N^{-12} \le \ep \le (5\kappa_t\wedge 1) \min\bigl( \frac1{16} ,\lambda ,\lambda^{-2}\bigr)$, we have:
\[
\sup_{t \in [0,T]}  \Prob \Bigl[  \;   W_1( \muNXt, f_t)   \ge  \ep \Bigr] \le
C_1  \Bigl( \,N^4 e^{- C_2 N  \ep^2} + e^{-C_2 (N \ep)^{1 - \alpha}} + e^{-C_2 N \ep^2/ (1 - \ln \ep)^2} \Bigr),
\]
where $f_t$ is the unique solution of the VPFP equation~\eqref{eq:VPFP} with initial condition $f_0$.
\end{cor}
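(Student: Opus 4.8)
The plan is to interpolate between the empirical measure $\mu^N_t := \frac1N\sum_{i=1}^N\delta_{(\Xit,\Vit)}$ of the particle system and $f_t$ through the empirical measure $\bar\mu^N_t := \frac1N\sum_{i=1}^N\delta_{(\Yit,\Wit)}$ of the $N$ i.i.d.\ copies of the nonlinear process built in Theorem~\ref{thm:main} (with the same Brownian motions and initial conditions, so that in this i.i.d.\ case $\mu^N_0 = \bar\mu^N_0$); by construction the pairs $(\Yit,\Wit)$, $i\le N$, are i.i.d.\ with common law $f_t$. Using the triangle inequality for $W_1$ together with the elementary transport bound $W_1(\mu^N_t,\bar\mu^N_t)\le\frac1N\sum_{i=1}^N(|\Xit-\Yit|+|\Vit-\Wit|)$, a union bound gives, for every $t\in[0,T]$,
\[
\Prob\bigl[W_1(\mu^N_t,f_t)\ge\ep\bigr]\le\Prob\Bigl[\tfrac1N\sum_{i=1}^N\bigl(|\Xit-\Yit|+|\Vit-\Wit|\bigr)\ge\tfrac\ep2\Bigr]+\Prob\bigl[W_1(\bar\mu^N_t,f_t)\ge\tfrac\ep2\bigr],
\]
so it suffices to bound each term uniformly in $t\in[0,T]$.

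For the first term I would apply Theorem~\ref{thm:main} directly, using $\frac1N\sum_i(|\Xit-\Yit|+|\Vit-\Wit|)\le\frac1N\sum_i\sup_{s\le t}(|\Xis-\Yis|+|\Vis-\Wis|)$ and the concentration inequality there with the threshold rescaled to $\ep/2$ (i.e.\ with its parameter set to $\ep/(2B_t)$, which remains in the admissible range up to adjusting constants, since $B_t,A_t,A_t',\kappa_t$ are bounded on $[0,T]$). Absorbing the polynomial prefactor $N^{3/2}(t+\ep)(A_t+A_t'\sqrt N\ep)$ and the $B_t^{-2}$ lost in the exponent into constants evaluated at $t=T$, this term is bounded by $C_1N^4e^{-C_2N\ep^2}$.

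For the second term, $\bar\mu^N_t$ is the empirical measure of $N$ i.i.d.\ samples of $f_t\in\PPP(\R^2)$, so I would use a deviation bound for the Wasserstein distance between an empirical measure and its mean, namely \cite[Theorem~2]{FG}, applied in dimension $d=2$ with exponent $p=1$. Since $p=d/2$ this is the critical case, which produces exactly the logarithmic correction $\ep^2/(1-\ln\ep)^2$ in the Gaussian-type term, the remaining term $(N\ep)^{1-\alpha}$ being the exponential-tail contribution. To apply that theorem one needs a finite exponential moment of $f_t$, bounded uniformly on $[0,T]$; this follows from the structure of~\eqref{eq:NLSDE}: since $|K|\le\frac12$, the velocity equation is an Ornstein--Uhlenbeck dynamics with a bounded extra drift, so $W_t$ has Gaussian-type tails uniformly in $t$, and then $Y_t=Y_0+\int_0^tW_s\,ds$ inherits the exponential moment of $Y_0$, whence $\int e^{\mu(|x|+|v|)}f_t(dx,dv)\le C_T<\infty$ for a suitable $\mu>0$. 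With this, \cite[Theorem~2]{FG} yields $\Prob[W_1(\bar\mu^N_t,f_t)\ge\ep/2]\le C_1(e^{-C_2(N\ep)^{1-\alpha}}+e^{-C_2N\ep^2/(1-\ln\ep)^2})$ uniformly in $t\in[0,T]$. Summing the two estimates and relabelling constants gives the stated inequality.

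I expect the only genuinely non-routine step to be the uniform-in-$t$ exponential moment estimate on $f_t$ needed to feed \cite[Theorem~2]{FG}; it is standard given the Ornstein--Uhlenbeck structure of~\eqref{eq:NLSDE} and the boundedness of $K$, but has to be carried out carefully, propagating $\E[e^{\mu|Y_0|}]<\infty$ and $\E[e^{\mu|W_0|}]<\infty$ forward with constants controlled on $[0,T]$. Everything else is bookkeeping of constants and of the admissible range of $\ep$ across the triangle inequality, Theorem~\ref{thm:main} and \cite[Theorem~2]{FG} being used as black boxes; in particular the rescaling $\ep\mapsto\ep/B_t$ is forced by the fact that the threshold in Theorem~\ref{thm:main} is $B_t\ep$ rather than $\ep$.
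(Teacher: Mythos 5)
Your proposal follows precisely the route the paper sketches (the paper only hints at the proof, citing \cite[Theorem 2]{FG} right before the statement, without giving details): triangle inequality through the i.i.d.\ nonlinear-process empirical measure $\bar\mu^N_t$, Theorem~\ref{thm:main} for the first piece, and Fournier--Guillin's deviation bound for the second. The one step you flag as ``non-routine'' --- uniform-in-$t$ propagation of an exponential moment for $f_t$ on $[0,T]$ --- is in fact already available in the paper as Proposition~\ref{prop:MomDev}, parts $(i)$ and $(ii)$, which give $\E[e^{\lambda|W_t|}]\le e^{\frac\lambda2(3+\lambda)}\E[e^{\lambda e^{-t}|W_0|}]$ and $\E[e^{\lambda|Y_t|}]\le 2\,e^{\lambda t(\frac12+\lambda)}\MM^{x,v}_\lambda(f_0)$, with constants explicitly controlled on compact time intervals; you would simply cite those rather than re-derive them from the Ornstein--Uhlenbeck structure. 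Two cosmetic observations: the constraint $\lambda N^{-12}\le\ep$ in the statement is visibly a typo for $\lambda N^{-1/2}\le\ep$, which your rescaling argument implicitly corrects; and the polynomial factor $(t+\ep)(A_t+A_t'\sqrt N\ep)N^{3/2}$ from Theorem~\ref{thm:main}, once $\ep$ is bounded on the admissible range, gives at most $O(N^2)$, comfortably absorbed into the stated $N^4$. The rescaling $\ep\mapsto\ep/(2B_T)$ does shift the admissible window by $B_T$-dependent constants, but since $C_1,C_2$ are allowed to depend on $T$ and $\lambda$, this is harmless (and for $\ep$ below the shifted lower threshold one may enlarge $C_1$ to make the bound trivial). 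In short, the proposal is correct and coincides with the intended argument.
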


%%%%%%%%%%%%%%%%%%%%%%%%%%%%%
\paragraph{Exponential concentration for discrete infinite norms.}

One of the key ingredient of the proof of Theorem~\ref{thm:main} is a deviation inequality on time integral (or supremum) of discrete infinite norms for the $\mu^N_{Y,t} := N^{-1} \sum_i \delta_{Y^{N,i}_t}$, where the $Y^{N,i}_t$ are $N$ i.i.d. r.v. solutions to a given SDE. Such a result have a interest by itself so we state it below

\begin{prop} \label{prop:DevBound}
Let $(Y_i,W_i)_{i _le N}$ be $N$ i.i.d copies of a solution of~\eqref{eq:NLSDE} driven by independent Brownian motions $(B_{i,t})_{i \le N, t \ge 0}$, and assume that for $t \ge 0$:
\begin{itemize}
\item the common initial condition has a law $f_0$ which admits a finite exponential moment (in position and velocity) for some $\lambda >0$: $\E\Bigl[ e^{\lambda(|Y_0| + |W_0|)} \Bigr] < + \infty$. In particular we denote $c_\lambda := \frac52 + \frac1\lambda \ln \E \bigl[ e^{\lambda  |W_0|} \bigr]$;
\item $ \kappa_t := \sup_{0 \le s \le t} \| \rho_s \|_\infty < +\infty$ where $\rho_s$ stands for the time marginal of $Y_i$ at time $s$.
\end{itemize}
Then  provided that $\lambda N^{-1/2} \le \ep \gamma \le 5 \kappa_t \min\Bigl( \frac1{16}, \lambda^{-2} \Bigr)$, the following bound holds with $\bs{C_t}  := 10 +  \kappa_t^2 \lambda^{-1}  
 +  e^{\lambda(1/2+ \lambda)t}\,\E\Bigl[ e^{\lambda(|Y_0| + |W_0|)} \Bigr]$:
\[
\Prob \biggl(  \sup_{s \in [0,t]} \bigl\| \rho^N_s  \bigr\|_\infep \ge \kappa_t + \gamma \biggr) 
 \le \bs{C_t} \biggl( 1 + t \frac{2 \kappa_t + \gamma}\lambda  \bigl(  c_\lambda + \sqrt N  \ep \gamma  \bigr)   \biggr)\,N^{\frac32} \,   e^{- 2 N(\ep \gamma)^2},
\]
\end{prop}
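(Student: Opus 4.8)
The plan is to control the discrete infinite norm $\|\rho^N_s\|_{\infty,\ep}$ by comparing it, on one hand, to the regularized density of the law $\rho_s$ (which is bounded by $\kappa_t$ by hypothesis), and on the other hand to the fluctuations coming from the empirical sampling. Recall that the discrete infinite norm $\|\cdot\|_{\infty,\ep}$ is a mollified sup-norm at scale $\ep$, so that for a fixed test point $x$, $\rho^N_s$ evaluated near $x$ is essentially an average $\frac1N\sum_i \ci_\ep(Y^N_{i,s}-x)$ of bounded i.i.d.\ (in $i$) contributions. First I would reduce the event $\{\sup_{s\le t}\|\rho^N_s\|_{\infty,\ep}\ge \kappa_t+\gamma\}$ to a union over a finite grid of space points and a finite grid of times: the spatial grid can be taken at mesh $\sim\ep$ since $\rho^N_s$ does not vary faster than that at scale $\ep$, contributing a polynomial-in-$N$ factor (this is the source of the $N^{3/2}$ and the $(2\kappa_t+\gamma)/\lambda$ bracket, the latter bounding the effective spatial range one must cover once velocity tails are controlled); the time grid can be taken at mesh $\sim\ep\gamma/\sqrt N$ or so, using that between two nearby times the positions $Y^N_{i,s}$ move by a controllable amount (here the drift $-W$ and the interaction term are handled, and the Brownian increments give the $\sqrt{\cdot}$ scaling), contributing the factor $t(\,\dots\,)$.

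Next, for each fixed $(s,x)$ on the grid I would apply a one-sided Bernstein/Chernoff bound to $\frac1N\sum_i \bigl(\ci_\ep(Y^N_{i,s}-x) - \E[\ci_\ep(Y^N_{i,s}-x)]\bigr)$: the summands are i.i.d., bounded in absolute value by $\sim 1/\ep$ and have variance $\lesssim \kappa_t/\ep$ since the density is bounded by $\kappa_t$, so the exponential moment method yields a deviation bound of the form $e^{-cN(\ep\gamma)^2/(\kappa_t + \ep\gamma)}$ or, in the regime $\ep\gamma\lesssim \kappa_t$ dictated by the hypothesis, the clean $e^{-2N(\ep\gamma)^2}$ that appears in the statement. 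The condition $\ep\gamma\ge \lambda N^{-1/2}$ guarantees the deviation $\gamma$ is large enough relative to the fluctuation scale for the bound to be meaningful, while $\ep\gamma \le 5\kappa_t\min(1/16,\lambda^{-2})$ keeps us in the Gaussian regime of Bernstein and absorbs the $\kappa_t^2\lambda^{-1}$ term. The mean $\E[\ci_\ep(Y^N_{i,s}-x)]$ is a mollification of $\rho_s$ at scale $\ep$, hence bounded by $\|\rho_s\|_\infty\le\kappa_t$, so the deviation of the full empirical quantity above $\kappa_t+\gamma$ forces a deviation of at least $\gamma$ of the centered sum.

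The remaining ingredient is the control of the velocity tails, needed both to truncate the spatial grid to a finite range and to control the time-discretization error uniformly. Here I would use part (i) of Theorem~\ref{thm:WPproc}: under a finite exponential moment $\E[e^{\lambda(|Y_0|+|W_0|)}]<\infty$, the propagated bound on $\|f_t\|_{e,\lambda e^{-t}}$ (equivalently an a priori estimate $\E[e^{\lambda e^{-t}|W_t|}]\lesssim e^{\lambda(1/2+\lambda)t}\E[e^{\lambda(|Y_0|+|W_0|)}]$ and a similar one for $|Y_t|$) shows that outside a ball of radius $O((2\kappa_t+\gamma)/\lambda)$ in position the contribution to $\|\rho^N_s\|_{\infty,\ep}$ is negligible except on an event of probability bounded by the same exponential $e^{-2N(\ep\gamma)^2}$ up to the stated prefactor; this is exactly where $c_\lambda=\frac52+\frac1\lambda\ln\E[e^{\lambda|W_0|}]$ and $\bs{C_t}=10+\kappa_t^2\lambda^{-1}+e^{\lambda(1/2+\lambda)t}\E[e^{\lambda(|Y_0|+|W_0|)}]$ enter. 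Assembling the union bound over the two grids and the tail event, and bookkeeping the polynomial factors, gives the claimed inequality.

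\textbf{Main obstacle.} The delicate point is the time-discretization: one must show that $s\mapsto \|\rho^N_s\|_{\infty,\ep}$ cannot jump by more than $\gamma$ between consecutive grid times with the right (exponentially small) failure probability, which requires a modulus-of-continuity estimate on the i.i.d.\ trajectories $Y^N_{i,s}$ that is uniform in $i$ and strong enough to survive the union bound over $i$ and over grid points — this is where the Brownian part forces the $\sqrt N\,\ep\gamma$ correction inside the bracket and where the exponential velocity moment is used a second time to handle the drift $-W^N_{i,s}\,ds$. Getting the constants to land on the precise $2N(\ep\gamma)^2$ in the exponent (rather than a worse constant) is the reason for the somewhat ad hoc upper restriction $\ep\gamma\le 5\kappa_t\min(1/16,\lambda^{-2})$, which should be chosen precisely so that the Bernstein correction and the mollification error are each at most a fixed small fraction of the main term.
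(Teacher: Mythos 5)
Your proposal takes essentially the same route as the paper: discretize in space and time, apply a Chernoff-type concentration bound to the Binomial variables $N\rho^N_s\bigl(B(x,\ep)\bigr)$ at each fixed grid point (Lemma~\ref{lem:DevFixt}), truncate the spatial range using the propagated exponential moment, and control the time mesh via a modulus-of-continuity concentration estimate on the i.i.d.\ trajectories (Corollary~\ref{cor:dev_Dt}), which is indeed the delicate step and where the $\sqrt N\,\ep\gamma$ correction arises. One small technical remark: the paper obtains the clean exponent $2N(\ep\gamma)^2$ not from a Bernstein (variance-based) bound but from a Hoeffding/bounded-differences inequality for Binomial variables (Lemma~\ref{lem:DevBern}); your intermediate form $e^{-cN(\ep\gamma)^2/(\kappa_t+\ep\gamma)}$ is not what Bernstein actually produces (the $\ep$ would appear only once in the numerator), though a Bernstein route would still yield a bound at least as strong, just with less explicit constants.
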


Essentially, the bound behave like $N^{3/2} e^{-2N (\ep \gamma)^2}$, in the most interesting range, when $\ep \gamma \sim N^{-1/2}$.

%%%%%%%%%%%%%%%%%%%%%%%%%
\paragraph{Some related works}
The literature on the convergence of particle systems towards non linear mean-field models is quite huge, so we will restrict ourselves to one dimensional models. The usual strategy, valid for smooth interaction, is well explained in Lecture notes by Sznitman~\cite{sznitman}.  In~\cite{Cepa} C\'epa and L\'epingle prove the propagation of chaos for the Dyson model: an order one model (\emph{i.e.\ }without velocities) with a strongly singular interaction $K(x) \sim |x|^{-1}$ modeling the behavior of eigenvalues of large hermitian matrices.  Their proof relies on the use of maximal monotone operators. Recently, that convergence result was extended to similar systems with even stronger interaction $K(x) \sim |x|^{-1-\alpha}$ with $\alpha \in [0,1)$, by Berman and \"Onnheim~\cite{Berman} using Wasserstein gradient flows. 
For second oder models (involving positions and velocities), our result is to the best of our knowledge the first result of propagation of chaos with the Poisson singularity in the stochastic case. In the deterministic case, \emph{i.e.\ }when the system under study is~\eqref{eq:Nps} without the Brownian motions, then the mean field limit was proved originally by~\cite{Trocheris}, and then by~\cite{CGP-ARMA} as a special case  of semi-geostrophic equations, and again by the first author~\cite{Hau-X}. We shall finally mention the very recent preprint of Jabin and Wang~\cite{JW}, which proves the propagation of chaos in a very similar setting.  They consider system like~\eqref{eq:Nps}, with or without noise, with a very weak assumption on the interaction: $K$ bounded, but a strong regularity assumption on the limit, and prove some quantitative propagation of chaos in term of relative entropy. 

%%%%%%%%%%%%%%%%%%%%%%%%%%
\paragraph{Plan of the paper.}

The paper is organized in the following way. In section~\ref{sec:WPPS}, we prove Theorem~\ref{thm:WPPS} (weak existence and uniqueness of the particle system). In Section~\ref{sec:WPproc}, we focus on the nonlinear limit SDE and prove Theorem~\ref{thm:WPproc} and its corollary~\ref{prop:WPlaw}, and also a useful proposition about propagation of moments (Proposition~\ref{prop:MomDev}). Section~\ref{sec:MeaPC} is devoted to the proof of Theorem~\ref{thm:MeaPC} on the propagation of chaos in the mean, Section~\ref{sec:DevBound} to the proof of Proposition~\ref{prop:DevBound} and Section~\ref{sec:main} to the proof of Theorem~\ref{thm:main} on the exponential concentration. A useful regularity lemma is proved in the Appendix.

%%%%%%%%%%%%%%%%%%%%%%%%%%%%%%
%
%
%
%
%
%
%
%
%
%%%%%%%%%%%%%%%%%%%%%%%%%%%%%%%%
\section{Proof of Theorem~\ref{thm:WPPS}} \label{sec:WPPS}

\paragraph{\sl A related SDE with memory.}
First note that a weak solution to equation ~\eqref{eq:Nps} is 
some stochastic basis, together with a $N$ dimensional Brownian motion $(B_t^N)_{t \ge 0}$  on it and a $\R^{2N}$ valued processes $(X^N_t,V^N_t)_{t \ge 0}$  satisfying  for all $t \ge 0$
\[
X_t^N=X_0^N+\int_0^t V_s^N \,ds,
\qquad 
V_t^N=V_0^N+\int_0^t K^N(V_s^N)ds-\int_0^t V_s^N \,ds+\sqrt 2 \, B_t^N	,
\]
where we denoted $K^N(x_1,\cdots,x_N)$ the vector valued field which $i$-th component is $\frac{1}{N}\sum_{j\neq i}K(x_i-x_j)$. 
But, from that system, we may write a  SDE ``with memory'' involving $V^N$ only:
\begin{equation} \label{delayedSDE}
\forall \, t \ge 0, \quad 
V_t^N=V_0^N+\int_0^t K^N \biggl(X_0^N  + \int_0^s V^N_u \,du  \biggr) \,ds-\int_0^t V_s^N \,ds + \sqrt 2 \, B_t^N	.
\end{equation}

Conversely, given a solution to the SDE~\eqref{delayedSDE}, it is not difficult to construct a solution to the original system. So it will be enough to prove weak existence and uniqueness in law for the delayed SDE~\eqref{delayedSDE}.

%%%%%%%
\paragraph{\sl Weak \ existence.}	
Let $(\Omega, \mathcal{F},(\mathcal{F})_{t\ge 0}, \mathbb{P})$ be a stochastic  basis and  $(B^N_t)_{t \ge 0}$ be a $N$-dimensional on it. We define 
\[
U^N_t:=-   \frac1{\sqrt 2} \int_0^t  K^N\Bigl(X_0^N+sV _0^N + \sqrt 2 \int_0^s B_u^Ndu\Bigr)ds 
+ \int_0^t \biggl( B_s^N+\frac1{\sqrt 2}V_0^N \biggr) ds + B_t^N,
\quad
V_t^N:=V_0^N+\sqrt 2 \, B^N_t. 
\]
The above definition of the two r.v. $(U^N_t, V^N_t)_{t \ge 0}$ implies that for any $t \ge 0$,
\[
V_t^N=V_0^N+\int_0^t K^N \biggl(X_0^N  + \int_0^s V^N_u \,du  \biggr) \,ds-\int_0^t V_s^N \,ds + \sqrt 2 \, U_t^N	,
\]
which is exactly~\eqref{delayedSDE} with $B^N_t$ replaced by $U^N_t$. So, it remains to apply Cameron-Martin-Girsanov (CMG) theorem:  with an appropriate change of the reference probability measure, $(U_t^N)_{t \ge 0}$ can be considered has a $N$-dimensional Brownian motion. For this, remark that $d U^N_t =  - H_t^N \,dt + d B^N_t$, where 
\[
H_t^N:= \frac1{\sqrt 2}\, K^N\biggl(X_0^N+ tV_0^N+\int_0^t \sqrt 2 B_u^Ndu \biggr)
-   B_t^N - \frac1{\sqrt 2} V_0^N,
\]
is $\mathcal{F}_t$-adapted and progressively measurable, and that  for $0 <\gamma < \frac1{6t}$
\[
\E \Bigl[e^{\gamma |H_t^N|^2} \Bigr] \leq e^{ \frac32 \gamma \bigl(  N \| K  \|^2_\infty+|V_0^N|^2 \bigr)} 
\E \Bigr[e^{3\gamma(B_t^N)^2}\Bigl] < + \infty.
% \\ &=\exp\Bigl(\gamma(N\left \| K \right \|^2_{\infty}+|\mathcal{V}_0^N|^2)+8N\gamma^2 t\Bigr)
\]
 Therefore we deduce from classical results about exponential martingales that the process $Z^N_t$ defined by
\[
Z^N_t=\exp\left ( \int_0^t  H^N_s \cdot  dB^N_s  -\frac{1}{2}\int_0^t|H^N_s|^2ds\right )
\]
(where $\cdot$ stands for the scalar product) is a martingale, %and satisfies $\E[Z^N_t]=1$ for all $t\in[0,T]$. 
and due to CMG theorem $(U^N_t)_{t \ge 0}$ is a $N$-dimensional Brownian motion under the probability $\mathbb{Q}$ defined for any $A$ in $\mathcal F_t$ by
$
\mathbb{Q}(A)=\int_{A}Z^N_t \, d\Prob
$.
Therefore $\bigl(\Omega,  \mathcal{F} ,(\mathcal{F}_t)_{t\ge 0}, \mathbb{Q}, (U^N_t,V^N_t)_{t \ge 0} \bigr)$ is a weak solution to SDE ~\eqref{delayedSDE}.

\paragraph{\sl Uniqueness \ in \ law.}
Suppose that $(\Omega,\mathcal{F},(\mathcal{F})_{t \ge 0},\mathbb{P},(V^N_t,B^N_t)_{t \ge 0}$  is a solution to equation~\eqref{delayedSDE} with initial condition $V_0^N$. We may use CMG theorem again: in fact $2^{-1/2} dV^N_t  = - \tilde H_t^N \,dt + dB^N_t$, where
\[
\tilde H^N_t := - \frac1{\sqrt 2} K^N\biggl(X^N_0+\int_0^t V_u^N \,du\biggr) +  \frac1{\sqrt 2} V_t^N.
\]
Moreover by~\eqref{delayedSDE}, $V^N_t$ also satisfies
\[
V^N_t = e^{-t} V^N_0 + \int_0^t  e^{s-t} K^N\biggl(X^N_0+\int_0^s V_u^N \,du\biggr) ds + \sqrt 2 \int_0^t e^{s-t} dB^N_s,
\]
which implies that 
\[
\bigl|\tilde H^N_t \bigr| \le \bigl|V^N_t \bigr|+ \frac{\sqrt N}2 \le \bigl|V^N_0 \bigr| + \frac{3\sqrt N}2 + \bigl|M^N_t \bigr|,
\] 
where $M^N_t$ is a Gaussian r.v.  with law $\mathcal N \bigl(0, (1-e^{2t}) \mathrm{Id} \bigr)$.  It follows that $\E \bigl[ e^{\gamma |\tilde H^N_t |^2}\Bigr] < \infty$, for $t \ge 0$ and $\gamma < 1/2$. This implies that the process $\tilde Z^N_t$ defined by
\[
\tilde Z^N_t=
\exp\left ( \int_0^t   \tilde H^N_s \cdot dB^N_s  -\frac{1}{2}\int_0^t \bigl| \tilde H^N_s \bigr|^2 \, ds \right )
= \exp\left ( \frac1{\sqrt 2} \int_0^t   \tilde H^N_s \cdot dV^N_s  + \frac{1}{2} \int_0^t \bigl| \tilde H^N_s \bigr|^2 \, ds \right )
,
\]
is a martingale, and by Cameron-Martin-Girsanov theorem,  $2^{-1/2} (V^N_t-V^N_0)_{t\geq 0}$ is a $N$-dimensional Brownian motion on the filtered space  $(\Omega, \FF,(\FF)_{t\ge0}, \tilde{\Q})$ where $\tilde \Q$ is defined for any $A$ in $\mathcal F_t$ by $\tilde \Q(A)=\int_{A} \tilde Z^N_t \, d\Prob$.

Now for any ``cylindrical'' function $\phi$ on $C(\R^+,\R^N)$ of the form $\phi\bigl((V^N_s)_{s \ge 0} \bigr)= \varphi_1(V^N_{t_1}) \times \cdots  \times \varphi(V^N_{t_k})$, we get for $t \ge t_k$:
\begin{align*}
\E_\Prob \Bigl[  \phi\bigl( (V^N_s)_{s \ge 0} \bigr)\Bigr]  & =
\E_{\tilde \Q}  \Bigl[  \phi\bigl( (V^N_s)_{s \ge 0} \bigr) \, \bigl(\tilde Z^N_t \bigr)^{-1} \Bigr] \\
& =
\E_{\tilde \Q}  \biggl[  \phi\bigl( (V^N_s)_{s \ge 0} \bigr)
\exp\left ( -\frac1{\sqrt 2} \int_0^t   \tilde H^N_s \cdot dV^N_s  - \frac{1}{2} \int_0^t \bigl| \tilde H^N_s \bigr|^2 \, ds \right )
\biggr].
\end{align*} 
The expression on the last line does not involve $(B^N_t)_{t \ge 0}$ anymore. Since the law of $(V^N_t)_{t \ge 0}$ under $\tilde \Q$ is the law of the Brownian motion, and since $\tilde Z^N_t$ can be expressed in term of $V^N_t$ only, that last expression does not depend on the specific solution we selected at the beginning of this paragraph: we will obtain exactly the same formula starting from a second solution. Since, solution to~\eqref{delayedSDE} have continuous trajectories, this implies the uniqueness in law  
of the solutions to the SDE~\eqref{delayedSDE} and also to~\eqref{eq:Nps}.

%%%%%%%%%%%%%%%%%%%%%%%%%%%%%%%%%%%%%%%%%%%%%%%%%%%%%%%%%%%%%%%%%%%%%%%%%%%%%
%
%
%
%
%
%
%
%%%%%%%%%%%%%%%%%%%%%%%%%%%%%%%%%%%%%%%%%%%%%%%%%%%%%%%%%%%%%%%%%%%%%%%%%%%%
\section{Proof of Theorem~\ref{thm:WPproc}} \label{sec:WPproc}

%\section{Deviations upper bound for u.i.d solutions of SDE}

\subsection{Weak-strong stability and uniqueness.} \label{sec:WSSU}

\paragraph{A key bound.}
Here we will use a ``rope'' argument which has already been used in \cite{Hau-X} to treat the propagation of chaos for deterministic VP1D equation. It consists in noticing that:
\begin{equation*}
K(x- \bar x)-K(y - \bar y)=0  
\end{equation*}
as soon as $|y - \bar y|>|x-y|+|\bar x - \bar y|$. It is also interesting to replace the later condition by the stronger one $|y - \bar y|> 2 \max(|x-y|,|\bar x - \bar y|)$.
Since $K$ is bounded by $1/2$, it implies the following bound, that we will use many times in the sequel
\begin{equation} \label{eq:rope}
 \left|K(x- \bar x)-K(y - \bar y)\right| \le   \indiq_{|y - \bar y|\leq 2\,|x-y|}+ \indiq_{|y - \bar y|\leq 2\,|\bar x - \bar y|}
\end{equation}

%\begin{figure}[htb]
%	\centering
%	\includegraphics[height=5cm]{CORDE.png}
%\end{figure}

That one-sided condition (the indicator functions in the r.h.s. take only $y-\bar y$ as argument) allows to prove a simple but important Lemma, where discrete infinite norms are introduced:
\begin{defn} \label{def:infep}
For any $\ep>0$, and any $f \in \PPP(\R)$, we define the infinite norm at scale $\ep$, denoted $\| f \|_\infep$ by:
\[
\| f \|_\infep := \sup_{ x \in \R} \frac{f\bigl([x- \ep, x+ \ep] \bigr)}{2 \ep} = \Bigl \|  f \star  \frac1{2\ep} \indiq_{[-\ep,\ep]} \Bigr\|_\infty.
\] 
\end{defn}
\begin{lem} \label{lem:rope}
Assume that $(X,Y)$ is a random couple of real numbers and that $(\bar X,\bar Y)$ is an independent copy of that couple. 

\medskip
(i) Then, 
\[
\E \bigl[  K(X-\bar X) - K(Y- \bar Y) \bigr] \le  8\, \min\bigl(  \| \rho_X\|_\infty , \| \rho_Y\|_\infty \bigr) \E \bigl[  |X-Y| \bigr],
\]
where $\rho_X,\rho_Y$ denote respectively  the density (with respect to the Lebesgue measure) of the law of $X$ and  $Y$.

\medskip
(ii) For $\ep>0$, we also have a similar estimate involving the discrete infinite norms  $\| \cdot\|_\infep$ defined in Definition~\ref{def:infep}:
\[
\E \bigl[  \bigl| K(X-\bar X) - K(Y- \bar Y)  \bigr| \bigr] \le  8\, \min\bigl(  \| \rho_X\|_\infep , \| \rho_Y\|_\infep \bigr) \Bigl( \E \bigl[  |X-Y| \bigr] + \frac \ep 2 \Bigr).
\]

\medskip
(iii) In the case where $(X,Y)$ and $(\bar X, \bar Y)$ are still independent but with possibly different distributions, we get the more general estimate: for $\ep, \bar \ep \ge 0$ (in  case set $\ep=0$ or $\bar \ep=0$, set $ \| \cdot  \|_{\infty,0} = \| \cdot \|_\infty$),
\[
\E \bigl[  \bigl| K(X-\bar X) - K(Y- \bar Y) \bigr|  \bigr] \le  4\,  \| \rho_{\bar Y}\|_{\infty,\bar \ep} \Bigl(\E \bigl[  |X-Y| \bigr]  + \bar \ep /2 \Bigr)+  
4\, \| \rho_Y\|_\infep   \Bigl( \E \bigl[  |\bar X- \bar Y| \bigr] + \ep/2 \Bigr).
\]

\end{lem}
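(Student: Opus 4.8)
The plan is to start from the pointwise bound~\eqref{eq:rope}, together with its mirror image
\[
\bigl|K(X-\bar X) - K(Y-\bar Y)\bigr| \le \indiq_{|X - \bar X| \le 2|X-Y|} + \indiq_{|X - \bar X| \le 2|\bar X - \bar Y|},
\]
which holds for exactly the same reason: if $|X - \bar X| > 2\max(|X-Y|,|\bar X - \bar Y|)$ then, using the identity $Y - \bar Y = (X - \bar X) - (X-Y) + (\bar X - \bar Y)$ and the triangle inequality, $Y-\bar Y$ differs from $X-\bar X$ by less than $|X-\bar X|$, so the two differences are nonzero and share the same sign, whence $K(X-\bar X) = K(Y-\bar Y)$. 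The only other ingredient is the elementary observation that for $\mu \in \PPP(\R)$ and an interval $I$ of length $L$ one has $\mu(I) \le (L + 2\ep)\|\mu\|_\infep$, obtained by covering $I$ with $\lceil L/(2\ep)\rceil$ intervals of length $2\ep$, each of the form $[x-\ep,x+\ep]$, and invoking the definition of $\|\cdot\|_\infep$; for $\ep = 0$ this reads $\mu(I) \le L\|\mu\|_\infty$.

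I would prove~(iii) first, since~(i) and~(ii) follow from it. Taking expectations in~\eqref{eq:rope} gives $\E\bigl[|K(X-\bar X) - K(Y-\bar Y)|\bigr] \le \Prob\bigl(|Y - \bar Y| \le 2|X-Y|\bigr) + \Prob\bigl(|Y - \bar Y| \le 2|\bar X - \bar Y|\bigr)$. In the first probability, condition on $(X,Y)$: since $\bar Y$ is independent of $(X,Y)$ with law $\rho_{\bar Y}$, the event says $\bar Y$ lies in an interval of center $Y$ and length $4|X-Y|$, so its conditional probability is at most $(4|X-Y| + 2\bar \ep)\|\rho_{\bar Y}\|_{\infty,\bar \ep}$; integrating in $(X,Y)$ yields $4\|\rho_{\bar Y}\|_{\infty,\bar \ep}\bigl(\E[|X-Y|] + \bar \ep/2\bigr)$. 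In the second probability, condition instead on $(\bar X,\bar Y)$: the event says $Y$ lies in an interval of center $\bar Y$ and length $4|\bar X - \bar Y|$, so it contributes at most $4\|\rho_Y\|_\infep\bigl(\E[|\bar X - \bar Y|] + \ep/2\bigr)$. Adding the two gives~(iii), and the case $\ep = 0$ or $\bar \ep = 0$ is the same computation with the length-$L$ bound.

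Then~(ii) is~(iii) applied with $(\bar X,\bar Y)$ an independent copy of $(X,Y)$ and $\bar \ep = \ep$: the two copies have the same position density $\rho_Y$ and $\E[|\bar X - \bar Y|] = \E[|X-Y|]$, so the two terms coincide and sum to $8\|\rho_Y\|_\infep\bigl(\E[|X-Y|] + \ep/2\bigr)$. Running the exact same argument from the mirror bound displayed above — integrating out $\bar X$ in each indicator, using that $\bar X$ has law $\rho_X$ — produces the identical estimate with $\|\rho_X\|_\infep$ in place of $\|\rho_Y\|_\infep$, whence the minimum. Finally~(i) is~(ii) with $\ep = 0$, after bounding the signed quantity $K(X - \bar X) - K(Y-\bar Y)$ by its absolute value.

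The computations are short and there is no genuine analytic obstacle; the one point that must be respected is the \emph{one-sided} nature of~\eqref{eq:rope} — each indicator on the right-hand side depends on only one of the two independent pairs (equivalently, only on $Y - \bar Y$, or on $X - \bar X$ in the mirror version) — because it is precisely this structure that turns the relevant conditional probability into the probability that a single real random variable falls into a fixed (random) interval, which is then controlled by a one-dimensional density or discrete infinite-norm bound. Everything else is bookkeeping with the constants $8$ (resp.\ $4$) and the $\ep/2$ correction terms attached to the discrete norms.
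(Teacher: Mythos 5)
Your proof is correct and follows essentially the same route as the paper: the pointwise rope bound~\eqref{eq:rope} (or its mirror obtained by swapping the roles of $X$ and $Y$), conditioning on one independent pair to turn each indicator into the probability that a single variable lands in a random interval, controlled by a density or discrete infinite-norm estimate, and the symmetry argument to obtain the minimum. The only organizational difference is that you establish the general asymmetric estimate~(iii) first and deduce~(ii) and~(i) as specializations, whereas the paper proves~(i) using the exchange $(X,Y)\leftrightarrow(\bar X,\bar Y)$ to merge the two indicator terms, then~(ii), and dispatches~(iii) as a ``straightforward adaptation''; the substance is identical, and your ordering has the small merit of writing out in full what the paper leaves implicit for~(iii).
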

\begin{proof} We first prove $i)$. 
Starting from~\eqref{eq:rope}, we may bound 
\begin{align*}
\E \bigl[   \bigl| K(X-\bar X) - K(Y- \bar Y)  \bigr| \bigr]
&  \le  \E \bigl[  \indiq_{|Y - \bar Y| \leq 2\,|X-Y|}+ \indiq_{|Y - \bar Y|\leq 2\,|\bar X - \bar Y|}  \bigr]  \\
& \le  2\, \E \bigl[  \indiq_{|Y - \bar Y| \leq 2\,|X-Y|}  \bigr]  \\
& \le 2 \, \E \Bigl[ \E \bigl[  \indiq_{|Y - \bar Y| \leq 2\,|X-Y|} \big| (X,Y) \bigr] \Bigr] \\
& \le 2 \, \E \Bigl[  4\, \| \rho_Y\|_\infty \,  |X-Y|  \Bigr] \\
& \le 8\, \| \rho_Y\|_\infty  \E \bigl[  |X-Y| \bigr].
\end{align*}
In the second line, we used that the expectation remain unchanged if we permute $(X,Y)$ with $(\bar X,\bar Y)$. In the fourth, we used that the law of $\bar Y$ has density $\rho_Y$. If we apply the previous calculation with the couple $(Y,X)$ and $(\bar Y, \bar X)$, we obtain a similar result, with $\| \rho_X \|_\infty$ in place of  $\| \rho_Y \|_\infty$.

To obtain $ii)$, remark that for any interval $[a,b] \subset \R$:
\[
\E \bigl[  \indiq_A (X) \bigr] = \int_a^b \rho_X(dx)  \le \| \rho_X \|_\infep \bigl( |b-a| + 2\ep \bigr).
\]
Indeed, to use discrete infinite norm, we need to cover $[a,b]$ by a union of small intervals of length $2\ep$. For this at most (the integer part of) $|b-a|/(2 \ep) + 1$ such intervals are requested.

For the third point, we cannot use the permutation $(X,Y) \leftrightarrow (\bar X,\bar Y)$ in the previous calculation and have to estimate the two terms separately. The necessary adaptations are straightforward.
\end{proof}

\paragraph{A simple Gr\"onwall lemma.}
That bound allows us to prove the weak-strong stability part of Theorem~\ref{thm:WPproc}. We introduce 
$(X_t,V_t)_{t \in \R_+}$ and $(Y_t,W_t)_{t \in \R_+}$ two solutions of the non-linear SDE~\eqref{eq:NLSDE} constructed on the same Brownian motion $(B_t)_{t \in \R}$, and  
also  $(\bar X_t,\bar V_t, \bar Y_t,\bar W_t)_{t \in \R}$ an independent copy of  the previous coupled processes.  We also assume that $\rho_t := \LL(X_t)$ has a bounded density for all $t \ge 0$. 
Then $(X_t-Y_t,V_t-W_t)_{t \in \R}$ solves the following ODE system:
\[
\frac d{dt} (X_t-Y_t) =  V_t-W_t, \qquad
\frac d{dt}   (V_t-W_t) =      -  (V_t-W_t) + \E_{(\bar X_t, \bar Y_t )} \bigl[  K(X_t - \bar X_t) - K(Y_t - \bar Y_t)  \bigr],
\]
which naturally leads to 
\begin{align*}
\sup_{s \in [0,t]} |  X_s-Y_s |   & \leq   |  X_0-Y_0 |  +  \int_0^t  | V_s-W_s|  \,ds,   \\
\sup_{s \in [0,t]} | V_s-W_s| 
& \le   | V_0-W_0| + \int_0^t   \bigl|K(X_s - \bar X_s) - K(Y_s - \bar Y_s) \bigr|  \,ds.
\end{align*}
If we take the expectation in the previous system, and apply the point (i) of Lemma~\ref{lem:rope} to the couple $(Y_t,X_t)$ and $(\bar Y_t, \bar X_t)$, we may write: 
\begin{align*}
\E \biggl[ \sup_{s\in [0,t]}  |  X_s-Y_s |  \biggr] 
& \leq \E [| X_0-Y_0|] +  \int_0^t  \E \bigl[ | V_s-W_s  | \bigr]  \, ds\\
\E \biggl[  \sup_{s\in [0,t]}  | V_s-W_s | \biggr]  
& \leq \E \bigl[| V_0-W_0| \bigr]  + 8 \int_0^t  \|  \rho_s \|_\infty   \E  \bigl[  | X_s- Y_s| \bigr]  \,ds.
\end{align*}

Summing up the two inequalities and applying the Gr\"onwall lemma lead to the requested estimate
\[
\E \biggl[ \sup_{s\in [0,t]} \bigl(  |  X_s-Y_s | + |V_s - W_s |   \bigr) \biggr]  
\le 
\E \bigl[ \  |  X_0-Y_0 | + |V_0 - W_0 |  \biggr]  \exp \biggl( t  + 8 \int_0^t \|  \rho_s\|_{\infty} \,ds \biggr).
\]

Remark that it is not completely straightforward to take advantage of the restoring force in order to improve the above bound, especially because of the supremum in time.

%%%%%%%%%%%%%%%%%%%%%%%%%%%%%%%%%%%%%%%%%%
\subsection{Propagation of moments} \label{sec:DevMom}

Here, we show that order one moments, and exponential moments are propagated by the SDE~\eqref{eq:NLSDE}.
We emphasize that our results apply not only for $K$ given by~\eqref{eq:defK}, but as soon as $\| K \|_\infty \le 1/2$. In particular, we will apply it later to nonlinear SDE where $K$ is replaced by a smooth mollification.

\begin{prop} \label{prop:MomDev}

Let be $(Y_t,W_t)_{t\ge 0}$ be a weak solution to~\ref{eq:NLSDE}, with given (random) initial condition $(Y_0,W_0)$, and with an interaction kernel $K$ which is not necessary given by~\eqref{eq:defK} but satisfies $\| K \|_\infty \le \frac12$. If $(Y_0,W_0)$ has an exponential moment of order $\lambda>0$, it holds for $t \ge 0$:
\begin{align*}
(i) & \qquad  
\E \Bigl[e^{\lambda |W_t|} \Bigr] \leq 
   e^{ \frac \lambda 2(3+ \lambda)} \E \Bigl[e^{\lambda e^{-t} |W_0|} \Bigr],
\\
(ii) & \qquad 
\E \Bigl[ e^{\lambda |Y_t|} \Bigr]  \le  2\, e^{\lambda t \bigl( \frac 12 + \lambda \bigr)  } \,
 \E \Bigl[ e^{\lambda (|Y_0| + |W_0|)} \Bigr]:=2\, e^{\lambda t \bigl( \frac 12 + \lambda \bigr)  } \, \mathcal{M}^{x,v}_{\lambda}(f_0).
\end{align*}

It also holds for any $0 \le s < t \le s+ \min\Bigl( \frac 1 {16},\lambda^{-2} \Bigr)$:
\begin{align*}
(iii) \qquad 
\E \Bigl[ e^{\lambda (t-s)^{-1} \sup_{s \le u \le t} |Y_u - Y_s|}\Bigr]  \le
e^{\frac \lambda 2(5 + \lambda)} \E \Bigl[e^{\lambda |W_0|} \Bigr].
\end{align*}

Lastly, simpler estimates on the order one moments also hold: for any $0 \le s < t \le  s+ \frac14$,
\[
(iv) \quad
\E\bigl[ |W_t| \bigr] \le e^{-t} \, \E\bigl[ |W_0| \bigr]  +2,
\qquad
\E\bigl[ |Y_t -Y_s| \bigr] \le |s-t| \Bigl( \E\bigl[ |W_0| \bigr]  + 3  \Bigr).
\]
\end{prop}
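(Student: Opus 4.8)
The plan is to prove the four moment estimates in Proposition~\ref{prop:MomDev} by exploiting the explicit (Duhamel/mild) form of the solution to the linear part of~\eqref{eq:NLSDE}, which decouples the velocity equation from the position equation thanks to the Ornstein--Uhlenbeck structure. First I would write
\[
W_t = e^{-t} W_0 + \int_0^t e^{s-t} \E_{\bar Y_s}\bigl[ K(Y_s - \bar Y_s) \bigr] \, ds + \sqrt 2 \int_0^t e^{s-t} \, dB_s,
\]
and observe that the drift term is bounded in absolute value by $\|K\|_\infty \int_0^t e^{s-t}\,ds \le \tfrac12(1 - e^{-t}) \le \tfrac12$, while the stochastic integral $G_t := \sqrt 2 \int_0^t e^{s-t}\,dB_s$ is a centered Gaussian with variance $1 - e^{-2t} \le 1$. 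Hence $|W_t| \le e^{-t}|W_0| + \tfrac12 + |G_t|$. For (i), I take $e^{\lambda|\cdot|}$, use $e^{\lambda|a+b+c|} \le e^{\lambda|a|} e^{\lambda|b|} e^{\lambda|c|}$ and the independence of $W_0$ (through the initial data) from $G_t$; the Gaussian factor gives $\E[e^{\lambda|G_t|}] \le 2 \E[e^{\lambda G_t}] = 2 e^{\lambda^2(1-e^{-2t})/2} \le 2 e^{\lambda^2/2}$ — actually to land exactly on the stated constant $e^{\frac\lambda2(3+\lambda)}$ I would be slightly more careful and bound $e^{\lambda/2}\cdot 2 e^{\lambda^2/2} \le e^{\frac\lambda2(3+\lambda)}$ using $2 \le e^{\lambda}$... this forces using $e^{3\lambda/2}$ which needs $2 e^{\lambda/2} \le e^{3\lambda/2}$, i.e. $2 \le e^\lambda$; if $\lambda$ is small one instead absorbs the $2$ differently, so I would keep track of whether the intended bound uses $e^{\lambda|G_t|} \le e^{\lambda|W_0|e^{-t}}\cdots$ — in any case the structure is: pull out $e^{\lambda e^{-t}|W_0|}$, and bound the deterministic-plus-Gaussian remainder by the constant $e^{\frac\lambda2(3+\lambda)}$.

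For (ii), I write $Y_t = Y_0 + \int_0^t W_s\, ds$, so $|Y_t| \le |Y_0| + \int_0^t |W_s|\,ds$. Using $|W_s| \le e^{-s}|W_0| + \tfrac12 + |G_s|$ gives $|Y_t| \le |Y_0| + |W_0| + \tfrac t2 + \int_0^t |G_s|\,ds$. Then $e^{\lambda|Y_t|} \le e^{\lambda(|Y_0|+|W_0|)} e^{\lambda t/2} e^{\lambda \int_0^t |G_s| ds}$, and by Jensen's inequality $e^{\lambda \int_0^t |G_s| ds} \le \tfrac1t \int_0^t e^{\lambda t |G_s|}\,ds$, whose expectation is at most $2 e^{\lambda^2 t^2 /2} \le 2$ once... no — that last bound needs $\lambda^2 t^2/2$ controlled, which is not uniform in $t$. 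Instead I would not use Jensen over $[0,t]$ but rather bound $\int_0^t|G_s|\,ds$ directly: since $G_s$ is Gaussian with variance $\le 1$, one can use a Minkowski-type / moment-generating argument, or simply note $\int_0^t |G_s|\,ds = \int_0^t |\sqrt2\int_0^s e^{u-s}dB_u|\,ds$ and change the order of integration to write it as a single stochastic integral plus remainder — actually the cleanest route is $\int_0^t |G_s| ds \le \int_0^t G_s^{+}ds + \int_0^t G_s^{-} ds$ and use that $\sup$-type controls; but the stated constant $e^{\lambda t(\frac12 + \lambda)}$ with the linear-in-$t$ exponent $\lambda^2 t$ suggests bounding $\E[e^{\lambda\int_0^t|G_s|ds}] \le e^{\lambda^2 t}$ via a Gaussian concentration/Borell--TIS type inequality for the Gaussian process $s\mapsto G_s$: its pointwise variance is $\le 1$ and $\E\int_0^t|G_s|ds \le t\sqrt{2/\pi} \le t$, so $\E[e^{\lambda \int_0^t |G_s|ds}] \le e^{\lambda \E[\int_0^t|G_s|ds] + \frac12\lambda^2 \mathrm{Var}(\int_0^t|G_s|ds)}$ after absorbing, and $\mathrm{Var}(\int_0^t|G_s|ds) \le 2t$; together with the factor $2$ from $|G|$ vs $G$ absorbed into the leading $2$, this yields the claimed form.

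For (iii), I repeat the (ii)-type computation but on the increment: $\sup_{s\le u\le t}|Y_u - Y_s| \le \int_s^t |W_r|\,dr \le (t-s)(e^{-s}|W_0| + \tfrac12) + \int_s^t |G_r|\,dr$, where now $|W_r|$ itself is re-expanded using $W_r = e^{-r}W_0 + (\text{drift}) + G_r$; multiplying by $\lambda(t-s)^{-1}$, raising to the exponential, using independence of $W_0$ from the Brownian increments after time $0$, and estimating the Gaussian contribution on the short interval (length $\le \min(1/16,\lambda^{-2})$, which is exactly what keeps $\lambda^2(t-s) \le 1$ and $\lambda(t-s)\le \lambda/16$ small) gives the constant $e^{\frac\lambda2(5+\lambda)}$ times $\E[e^{\lambda|W_0|}]$. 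Finally (iv) is the non-exponential, elementary version: take expectations of $|W_t| \le e^{-t}|W_0| + \tfrac12 + |G_t|$ and use $\E|G_t| \le \sqrt{\E G_t^2} \le 1$, giving $\E|W_t| \le e^{-t}\E|W_0| + \tfrac32 \le e^{-t}\E|W_0| + 2$; and $\E|Y_t - Y_s| \le \int_s^t \E|W_r|\,dr \le (t-s)(\E|W_0| + 2) \le (t-s)(\E|W_0|+3)$, with a bit of slack to absorb the $\tfrac12$ drift cleanly.

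The main obstacle I anticipate is getting the \emph{sharp constants} exactly as stated rather than just a qualitative bound: the delicate points are (a) correctly handling the factor $2$ coming from $\E[e^{\lambda|G|}] \le 2\E[e^{\lambda G}]$ and deciding where it lands (inside the exponent as $e^{\lambda}$, or kept as a leading $2$ as in (ii) and (iii)), and (b) bounding $\E[e^{\lambda\int_0^t |G_s|\,ds}]$ by $e^{\lambda^2 t}$ (and the analogous short-time bound for (iii)) — this requires a Gaussian-process concentration inequality (Borell--TIS or a direct Gaussian Fubini/Ito computation) rather than a naive Jensen argument, since Jensen over $[0,t]$ reintroduces a $t^2$ in the exponent. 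One clean alternative for (b) is to compute $\int_0^t G_s\,ds$ exactly as a Gaussian random variable via stochastic Fubini — $\int_0^t G_s\,ds = \sqrt2\int_0^t \bigl(\int_u^t e^{u-s}ds\bigr)dB_u = \sqrt2\int_0^t (1 - e^{u-t})\,dB_u$, which is centered Gaussian with variance $2\int_0^t(1-e^{u-t})^2 du \le 2t$ — and then split $|G_s|$ via $|x| \le x + 2x^-$ or handle the absolute value by a symmetrization/reflection bound; this keeps everything explicit and linear in $t$. Once those two points are pinned down, the rest is the routine Duhamel bookkeeping sketched above.
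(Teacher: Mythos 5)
Your high-level strategy (Duhamel/mild form of $W$, decompose into initial data plus bounded drift plus Gaussian noise, take exponential moments term-by-term) is exactly the paper's, and your part (iv) and your $|W_t| \le e^{-t}|W_0| + \tfrac12 + |M_t|$ bound are spot on. But the two ``delicate points'' you flag at the end are in fact unresolved gaps, and a third one in (iii) is not even flagged.

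\emph{Gap 1: the Gaussian exponential moment lemma.} For (i) you get stuck on the factor $2$ in $\E[e^{\lambda|M_t|}]\le 2e^{\lambda^2\sigma^2/2}$ and try $2\le e^\lambda$, which fails for $\lambda<\ln 2$. The paper resolves this with a two-branch bound obtained from the exact formula $\E[e^{\lambda|Z|}]=e^{\lambda^2\sigma^2/2}\bigl(1+\erf(\lambda\sigma/\sqrt2)\bigr)$, namely its inequality~\eqref{expmomG}: one keeps the factor $2$ (used in (ii)), but in (i) uses instead the other branch, an $e^{c\lambda\sigma}$ bound with a universal $c<1$; since $\sigma\le1$, the small linear-in-$\lambda$ correction absorbs into the $e^{3\lambda/2}$ without any constraint on $\lambda$. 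Without this two-branch estimate (or an equivalent sharpening) you cannot recover the stated constant $e^{\frac\lambda2(3+\lambda)}$ uniformly in $\lambda$.

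\emph{Gap 2: avoid $\int_0^t|M_s|\,ds$ entirely.} For (ii) you first bound $|Y_t|\le|Y_0|+\int_0^t|W_s|\,ds$ and then face $\int_0^t|G_s|\,ds$; you correctly note that Jensen gives a useless $t^2$ in the exponent, and you reach for Borell--TIS. The paper never has this problem: it writes the \emph{exact} Duhamel formula for $Y_t$ and applies stochastic Fubini, producing a single centered Gaussian $N_t=\sqrt2\int_0^t(1-e^{u-t})\,dB_u$ with variance $\sigma_t^2\le 2t$, so that $|Y_t|\le|Y_0|+|W_0|+\tfrac t2+|N_t|$ and then the same exponential-moment lemma applies. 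You do eventually say ``compute $\int_0^t G_s\,ds$ exactly via stochastic Fubini,'' which is the right idea, but then you talk of ``splitting $|G_s|$'' — there is no $|G_s|$ left after this step, only $|N_t|$, so no splitting or symmetrization is needed. The Borell--TIS route is both unnecessary and will not give the stated constant $2e^{\lambda t(\frac12+\lambda)}$.

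\emph{Gap 3: the time-supremum in (iii).} The quantity to control is $\sup_{s\le u\le t}|N'_u|$ with $N'_u=\sqrt2\int_s^u(1-e^{v-u})\,dB_v$, a genuine running supremum of a stochastic integral. The paper handles it by a time change reducing to $\sup_{[0,\tau]}|B|$ with $\tau\le(t-s)^3$, the reflection principle $\sup_{[0,\tau]}B\overset{\LL}{=}|B_\tau|$, and then a small H\"older/Jensen optimization in an auxiliary parameter $\theta$ to reduce the naive prefactor $4$ to $e^{2\lambda}$, after which the short-interval constraint $t-s\le\min(1/16,\lambda^{-2})$ closes the constant. Your sketch instead passes through $\int_s^t|W_r|\,dr$ and hence $\int_s^t|G_r|\,dr$, hand-waves the Gaussian contribution, and appeals again to Borell--TIS; none of the three ingredients above (time change, reflection, $\theta$-optimization) appears, and without them the constant $e^{\frac\lambda2(5+\lambda)}$ is not reachable by the elementary means the paper uses.

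In short: the skeleton is right, but the proposal is missing the specific Gaussian moment lemma that drives (i), conflates $\int_0^t|M_s|\,ds$ with $|\int_0^t M_s\,ds|$ in (ii)--(iii), and omits the sup-of-Brownian-motion argument that (iii) hinges on. These are exactly the points you yourself flag as ``anticipated obstacles,'' so they need to be filled in before this is a proof.
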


\begin{proof}

{\sl Point $(i)$.}
First, introducing the notation $f_t$ for the time marginal of $(Y_t,W_t)$ and  $F(t,x) := \int K(x-y) f_t(dy,dw)$ we have by~\eqref{eq:NLSDE}
\begin{equation} \label{equal:W}
e^t W_t  = W_0 + \int_0^t e^s F(s,Y_s)\,ds + \sqrt 2 \int_0^t e^s dB_s.
\end{equation}
Next, since $|K|$ and also $|F|$ are bounded by $1/2$, we get a simple inequality 
\begin{equation}
 |W_t| \le e^{-t} |W_0| + \frac12  + |M_t|, \quad \text{with} \quad
M_t := \sqrt 2 \int_0^t e^{s-t} dB_s. \label{bound:W}
\end{equation}
$M_t$ is a centered Gaussian random variable with variance $1-e^{-2t} \le 1$, for which exponential moments 
are simple to obtain. In fact for a Gaussian variable $Z \sim \NN (0,\sigma^2)$ we have a simple bound
\[
\\E\left ( e^{\lambda  | Z |} \right ) =  \frac2{\sqrt{2\pi } \sigma}\int_0^{+\infty} e^{\lambda x -\frac{x^2}{2  \sigma^2}}\, dx
=  \frac 2 {\sqrt\pi }  e^{\frac{\lambda^2 \sigma^2}2} \int_{- \lambda \sigma \,  2^{-1/2}}^{+\infty} e^{-x^2  }\, dx 
 =  e^{\frac{\lambda^2 \sigma^2}2}  \biggl( 1 + \erf\biggl(\frac{\lambda \sigma}{\sqrt 2}\biggr) \biggr),
\]  
where we used for the error function $\erf$ the definition $\erf(x) := \frac2{\sqrt \pi} \int_0^x e^{-x^2} \,dx$. Using that $\erf(x) \le \min \bigl(1, e^x \bigr)$, we finally get the following bound, that will be very useful in the sequel:
\begin{equation} \label{expmomG}
\\E\left ( e^{\lambda  | Z |} \right ) \le 
\min \Bigl( 2, e^{\frac1{\sqrt 2} \lambda \sigma} \Bigr) e^{\frac{\lambda^2 \sigma^2}2} 
\end{equation}
Together with the independence of the Brownian motion $(B_t)_{t \ge 0}$ and the initial condition $W_0$, it leads here to
\begin{align*}
\E \Bigl[ e^{\lambda |W_t|}\Bigr] & \le 
e^{ \frac \lambda 2 } \E \Bigl[ e^{\lambda e^{-t} |W_0|}\Bigr] \E \Bigl[e^{\lambda |M_t|}\Bigr]
 \le 
e^{ \frac \lambda 2 } \E \Bigl[ e^{\lambda e^{-t} |W_0|}\Bigr] e^{\frac1{\sqrt 2} \lambda  + \frac12 \lambda^2}\\ 
&\le    e^{ \frac \lambda 2 (3+ \lambda) }  
 \E \Bigl[ e^{\lambda e^{-t} |W_0|}\Bigr],
\end{align*}
which is exactly $(i)$. 

\medskip
{\sl Point $(ii)$.}
For the second point, we integrate the inequality~\eqref{equal:W} and get:
\begin{align}
Y_t & = Y_0 + \int_0^t  W_s \,ds 
 = Y_0 + (1- e^{-t}) W_0  + \int_0^t (1- e^{s-t}) F(s,Y_s)\,ds  +  N_t, \label{equal:Y} \\
 |Y_t| & \le |Y_0| + |W_0| + \frac t2 + |N_t|, \label{bound:Y}\\
 & \quad \text{with} \quad
N_t := \int_0^t e^{-s} M_s \,ds =  \sqrt 2 \int_0^t (1- e^{u-t})\,dB_u,  \nonumber
\end{align}
where we have used a stochastic version of Fubini's Theorem in the last line.
$N_t$ is a centered random Gaussian variable with variance $\sigma_t^2 := 4e^{-t} - e^{-2t} -3 + 2t \le 2t$. The bound~\eqref{expmomG} and the independence of the initial condition and $N_t$ then lead to
\begin{align*}
\E \Bigl[ e^{\lambda |Y_t|} \Bigr] &  
\le 
  e^{\lambda \frac t 2}    \E \Bigl[ e^{\lambda (|Y_0| + |W_0|)} \Bigr]
\E \Bigl[ e^{\lambda |N_t|} \Bigr], \\
& \le 
  e^{\lambda \frac t 2}   \E \Bigl[ e^{\lambda (|Y_0| + |W_0|)} \Bigr]
\, 2 \, e^{  \frac12 \lambda^2 \sigma_t^2} 
 \le 
2 \, e^{\lambda \frac t 2 + \lambda^2 t }   \, \E \Bigl[ e^{\lambda (|Y_0| + |W_0|)} \Bigr],
\end{align*}
which leads to the claimed result.

\medskip
{\sl Point $(iii)$.} We first integrate equality~\eqref{equal:W} between $s$ and $u$, and  take a supremum in time:
\begin{align}
Y_u &  = Y_s + (1- e^{s-u}) W_s  + \int_s^u (1- e^{v-u}) F(v,Y_v)\,dv  +  N'_u,   \nonumber \\
\sup_{s \le u \le t} |Y_u- Y_s| & \le (t-s) |W_s| + \frac {t-s}2 + \sup_{s \le u \le t} |N'_u|, \quad \text{with} \quad
N_u' :=  \sqrt 2 \int_s^u (1- e^{v-u})\,dB_v,  \label{bound:dY}
\end{align}
But, thanks to the properties of Brownian motion, by a change of time $\tau = \sigma_{t-s}^2 = 4e^{s-t} - e^{2(s-t)} -3 + 2(t-s) \le  (t-s)^3$:
\[
\sup_{s \le u \le t} |N_u'| \stackrel{\LL}= \sup_{0 \le u \le \tau} |B_u|
\stackrel{\LL}= \sqrt \tau \sup_{0 \le u \le 1} |B_u|
.
\]
The law of supremum in time of the absolute value of a 1D Brownian motion is explicitly known, see for instance~\cite[p. 342]{Feller}. Here, we will use only simple estimates on the exponential moments:
\begin{align*}
\E \Bigl[ e^{\lambda \sup_{0 \le u \le 1} |B_u|} \Bigr] & \le  
\E \Bigl[ e^{\lambda \sup_{0 \le u \le 1} B_u} \Bigr] +  \E \Bigl[ e^{\lambda \sup_{0 \le u \le 1} (-B_u)} \Bigr] \\
& \le 2 \, \E \Bigl[ e^{\lambda|B_1|} \Bigr] \le 4 e^{\frac12 \lambda^2}.
\end{align*}
In the second line, we use the well-known equality
$ \sup_{0 \le u \le \tau} B_u \stackrel \LL = \sup_{0 \le u \le \tau} (-B_u)
 \stackrel \LL =  |B_\tau|$,
and then the exponential moments given by~\eqref{expmomG}. The constant $4$ appearing above will raise some difficulties so we will perform a little optimization to get rid of it. For any $\theta \ge 1$, we may also bound
\begin{align*}
\E \Bigl[ e^{\lambda \sup_{0 \le u \le 1} |B_u|} \Bigr]  \le  
\E \Bigl[ e^{\lambda \theta \sup_{0 \le u \le 1} |B_u|} \Bigr]^{\frac 1 \theta} 
\le  \Bigl(4 e^{\frac12 (\theta \lambda)^2} \Bigr)^{\frac1 \theta}
= e^{\frac1 \theta \ln 4 + \frac \theta 2 \lambda^2}
\end{align*}
The optimal $\theta$ seems to be $\theta = \frac{2 \sqrt{\ln 2}}\lambda$. It is admissible when $\lambda \le 2 \sqrt{\ln 2}$. It leads to 
\[
\E \Bigl[ e^{\lambda \sup_{0 \le u \le 1} |B_u|} \Bigr] \le e^{2 \lambda \sqrt{\ln 2} } \le e^{2 \lambda}.
\]
Finally for $t-s \le \lambda^{-2}$, the upper bound on $\tau$ leads to $\lambda (t-s)^{-1} \sqrt \tau \le  \lambda \sqrt{t-s} \le 1$ and 
\[
\E \Bigl[ e^{\lambda (t-s)^{-1}\sup_{s \le u \le t} |N_u'|} \Bigr] =
\E \Bigl[ e^{\lambda (t-s)^{-1} \sqrt \tau \sup_{0 \le u \le 1} |B_u|} \Bigr]
\le e^{2 \lambda \sqrt{t-s}}.
\]
Using  the point (ii) of the Proposition and~\eqref{bound:dY}, we write
\begin{align*}
\E \Bigl[ e^{\lambda (t-s)^{-1} \sup_{s \le u \le t} |Y_u - Y_s|}\Bigr] & \le e^{\frac \lambda 2}\E \Bigl[ e^{\lambda |W_s|} \Bigr]
\E \Bigl[ e^{\lambda (t-s)^{-1}\sup_{s \le u \le t} |N_u'|} \Bigr]   \\
& 
\le e^{\frac \lambda 2} \, e^{\frac \lambda 2(3 + \lambda)} \E \Bigl[ e^{\lambda |W_0|} \Bigr]
\, e^{2 \lambda \sqrt{t-s}},
\end{align*}
which concludes the proof, using that $\sqrt{t-s} \le \frac14$ by assumption.

\medskip
{\sl Point $(iv)$.}
Taking the expectation in~\eqref{bound:W},
\[
\E \bigl[ |W_t| \bigr] \le  e^{-t} \E \bigl[ |W_0| \bigr]  + \frac12  + \E \bigl[|M_t| \bigr].
\]
and using that $ M_t$ is $\NN(0,1-e^{-2t})$ distributed, the expectation is simply bounded: $\E \bigl[|M_t| \bigr] \le \sqrt{2 /\pi} \le 1$ and it implies the first bound of point $(iv)$. The second bound uses~\eqref{bound:Y} written with $s$ instead of $0$, which leads in average to
\[
\E \bigl[ |Y_t - Y_s| \bigr] \le (1- e^{s-t}) \E \bigl[ |W_s| \bigr]  + \frac{t-s}2  + \E \bigl[|N_{t-s}| \bigr].
\] 
Using, $1-e^{s-t} \le t-s$ and the previous bound on $\E[|W_s|]$, and the fact that $N_t$ is $\NN(0,\sigma^2_{t-s})$ distibuted, with $\sigma^2_{t-s} \le  (t-s)^3$, it comes
\[
\E \bigl[ |Y_t - Y_0| \bigr] \le (t-s) \biggl( \E \bigl[ |W_0| \bigr] + 2 + \frac12 + \sqrt{\frac { 2(t-s)} \pi} \biggr),
\]
and the conclusion follows when $t-s \le \frac14$. 
\end{proof}

%%%%%%%%%%%%%%%%%%%%%%%%%%%%%%%%%%%%%%%%%%
\subsection{Strong existence via regularization and Feymann-Kac type estimates.} \label{sec:FK}
We introduce a smoothing kernel $\chi \in C^\infty(\R,\R^+)$ with support included in $[-1,1]$ and satisfying $\int_\R \chi(y) \,dy =1$. And then standardly for $\eta >0$,  $\chi_\eta := \eta^{-1} \chi(\frac \cdot \eta)$, and the approximated kernel 
\begin{equation} \label{ker_approx}
K_\eta := K * \chi_\eta, \quad \text{which satisfies } \; 
\left | \left ( K_{\eta}-K \right )(x) \right | \leq \indiq_{[-\eta,\eta]}(x),
\end{equation}
for all $x \in \R$. Given a stochastic basis, and a Brownian $(B_t)_{t \ge 0}$ motion on it, we consider the following non linear SDE:
\begin{equation} \label{SDE_approx}
Y^\eta_t= Y_0^\eta + \int_0^t W^\eta_s \,ds, \qquad 
W^\eta_t= W_0^\eta+  \int_0^t \E_{\overline{Y}}\bigl[K_\eta(Y^\eta_s-\overline{Y}) \bigr]\, ds  - \int_0^t W^\eta_s \, ds +   \sqrt 2\, B_t ,
\end{equation}
where $\overline{Y}$ is an independent copy of $Y^\eta$ and the initial condition $(Y_0^\eta,W_0^\eta)$  is defined as
\begin{equation}
\label{CI_eta}
Y_0^{\eta}:=Y_0+\eta U, \qquad W_0^{\eta}:=W_0+\eta V,
\end{equation}
where $(Y_0,W_0)$ has law $f_0$ and is independent of $(U,V)$ of law $\chi\otimes\chi$ (and both are independent of the Brownian motion $(B_t)_{t\geq 0}$). Then $(Y_0^\eta,W_0^\eta)$ has for law $\mu_0^\eta:= f_0 * \tilde \chi_\eta$ with $\tilde \chi_\eta(y,w) := \chi_\eta(y) \chi_\eta(w)$, and is independent of the Brownian motion $(B_t)_{t\geq 0}$ . Introducing the notation
\begin{equation*}
\widetilde{K}_{\eta}[\mu](x)=\int_{\R^2} K_\eta(x-y)\mu(dy,dw),
\end{equation*}
this system can be written in an equivalent manner:
\begin{equation*}
Y^\eta_t= Y_0^\eta + \int_0^t W^\eta_s \,ds, \qquad 
W^\eta_t= W_0^\eta +  \int_0^t \widetilde{K}_{\eta}[\mu^{\eta}_s](Y^\eta_s)\, ds  - \int_0^t W^\eta_s \, ds + \sqrt 2\,  B_t ,
\end{equation*}
where $\mu^\eta_t$ is the time marginal at time $t$, \emph{i.e.\ }the law of $(Y^\eta_t,W^\eta_t)$.

\medskip
Since the kernel $K_\eta$ is globally Lipschitz, \cite[Thm 1.1]{sznitman}  implies the strong existence and uniqueness of the process $(Y^\eta_t,W^\eta_t)_{t \ge 0}$ solving~\eqref{SDE_approx}. And by an application of Ito's rule the family of the time marginals $(\mu^\eta_t)_{t \ge 0}$ of that process is a weak solution of the following regularized Vlasov-Poisson-Fokker-Planck equation:
\begin{equation}
\frac{\partial}{\partial t}\mu^\eta_t+v \, \partial_x \mu^\eta_t+\widetilde{K}_\eta[\mu^\eta_t] \, \partial_v\mu^\eta_t= \partial_v ( \partial_v \mu^\eta_t + v \mu^\eta_t ),
\label{eq:reg VPFP}
\end{equation}  
with the initial condition $\mu^\eta_0=\LL(Y_0,W_0)=f_0*\tilde \chi_{\eta}$. We begin by proving some $\eta$ independent estimates on $\mu^\eta_t$, for $t\ge 0$.

%%%%%%%%%%%%%%%%%%%%%
\paragraph{Feynmann-Kac type estimates}

\begin{lem}\label{lem:FK}
Assume that the law of the initial condition of equation \eqref{eq:NLSDE} $f_0 \in \PPP_1 \cap L^1(\R^2)$ satisfies either
$\| f_0\|_{e,\lambda} < \infty$ for some $\lambda >0$ or 
$\| f_0\|_{p,\gamma} < \infty$ for some $\gamma >1$.
 Then for all $t>0$, the unique (measure) solution to the smoothed VPFP equation~\eqref{eq:reg VPFP} with initial condition $\mu^\eta_0$ satisfies respectively
 \begin{equation} \label{def:Clambda}
 \| \mu^{\eta}_t\|_{e,\lambda} \le  2   \, e^{ t+ \lambda \eta + \frac{\lambda}2 +  \frac{\lambda^2}2} \, \| f_0 \|_{e,\lambda e^{-t}} ,
 \qquad
 \| \mu^{\eta}_t\|_{p,\gamma} \le \| f_0\|_{p,\gamma}
\left\langle \eta \right\rangle^{\gamma} C_{\gamma} e^{(1+\gamma) t},
 \end{equation}
 where $C_{\gamma}$ is a constant depending explicitly on $\gamma$.
 
In particular, the associated spatial density $\rho^\eta_t := \int_\R \mu^\eta_t(x,v) \,dv$ satisfies respectively
 \begin{equation} \label{def:Klambda}
\left\|\rho^\eta_t \right\|_{\infty}  \leq \frac4 \lambda \, e^{ t+ \frac{\lambda}2 +  \frac{\lambda^2}2} e^{\lambda \eta} \,   \| f_0 \|_{e,\lambda e^{-t}} ,
\qquad 
\| \rho^\eta_t \|_{\infty}  \leq   \frac {2\gamma}{\gamma-1}  C_\gamma
e^{(1+ \gamma) t}  \la \eta\ra ^\gamma \| f_0\|_{p,\gamma}.
\end{equation}

\end{lem}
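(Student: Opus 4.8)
The plan is to propagate the weighted-sup-norm bounds along the characteristics of the smoothed VPFP equation \eqref{eq:reg VPFP}, which amounts to a Feynman--Kac (method of characteristics) estimate for the backward stochastic flow associated to \eqref{SDE_approx}. First I would fix the weight: for the exponential case work with $\phi_\lambda(v)=e^{\lambda|v|}$ and for the polynomial case with $\psi_\gamma(v)=\la v\ra^\gamma$. The key observation is that $\mu^\eta_t$ is the law of $(Y^\eta_t,W^\eta_t)$, so by the backward Kolmogorov/duality representation one has, for any test function $g\ge 0$,
\[
\int g\,\mu^\eta_t = \E\bigl[ g(Y^\eta_t,W^\eta_t)\bigr],
\]
and running the SDE \eqref{SDE_approx} backward from a deterministic terminal point $(x,v)$ gives a stochastic characteristic $(\tilde Y^\eta_s,\tilde W^\eta_s)_{0\le s\le t}$ ending at $(x,v)$ at time $t$; the density at $(x,v)$ is then $f_0$ evaluated at the (random) starting point, weighted by the Jacobian of the flow, averaged over the noise. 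Since the drift in $v$ is $\widetilde K_\eta[\mu^\eta_s](y)-v$ and the diffusion is constant, the flow is an affine-in-$(x,v)$ perturbation of a deterministic Ornstein--Uhlenbeck flow plus a Gaussian; in particular the velocity variable dilates/contracts under the backward flow exactly by the factor $e^{t}$ coming from the friction $-v\,dt$, and the position variable picks up a time integral of the velocity.

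The main computation is to estimate $\phi_\lambda(v)\,f_0(\text{start point})$ (resp.\ $\psi_\gamma$). The backward characteristic, started at terminal velocity $v$, has initial velocity of the form $e^{t}v + (\text{bounded term of size}\le \tfrac12 t\,e^{t}\ \text{since }|\widetilde K_\eta|\le\tfrac12) + (\text{Gaussian term})$, and the $\eta U,\eta V$ smoothing of the initial law contributes the extra $e^{\lambda\eta}$ (resp.\ $\la\eta\ra^\gamma$) factor since $\|f_0\star\tilde\chi_\eta\|_{e,\lambda}\le e^{\lambda\eta}\|f_0\|_{e,\lambda}$ and similarly $\|f_0\star\tilde\chi_\eta\|_{p,\gamma}\le\la\eta\ra^\gamma C'_\gamma\|f_0\|_{p,\gamma}$. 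Taking the expectation over the Gaussian part and using the Gaussian exponential-moment bound \eqref{expmomG} (variance $\le 1$) converts $\E[e^{\lambda|\text{Gaussian}|}]$ into the factor $e^{\lambda/2+\lambda^2/2}$ (the $e^{\lambda/2}$ from the linear exponent, $e^{\lambda^2/2}$ from the variance), while the drift bound $\tfrac12 t$ is absorbed; the $e^{t}$ dilation in velocity means we must use $\|f_0\|_{e,\lambda e^{-t}}$ rather than $\|f_0\|_{e,\lambda}$, and the Jacobian of the OU flow contributes the remaining $e^{t}$ prefactor. This yields the first inequality in \eqref{def:Clambda}. The polynomial case is handled the same way: $\la e^{t}v+\text{drift}+G\ra^{\gamma}\le C_\gamma e^{\gamma t}\la v\ra^\gamma\la\text{drift}\ra^\gamma\E[\la G\ra^\gamma]$, where $\E[\la G\ra^\gamma]$ is a finite constant depending on $\gamma$ only, producing $\|\mu^\eta_t\|_{p,\gamma}\le C_\gamma e^{(1+\gamma)t}\la\eta\ra^\gamma\|f_0\|_{p,\gamma}$.

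For the density bounds \eqref{def:Klambda}, I would simply integrate the pointwise weighted bound in $v$:
\[
\rho^\eta_t(x)=\int_\R\mu^\eta_t(x,v)\,dv\le \|\mu^\eta_t\|_{e,\lambda}\int_\R e^{-\lambda|v|}\,dv = \frac{2}{\lambda}\|\mu^\eta_t\|_{e,\lambda},
\]
and $\int_\R\la v\ra^{-\gamma}\,dv = \tfrac{2\gamma}{\gamma-1}\cdot(\text{a }\gamma\text{-constant})<\infty$ since $\gamma>1$; combining with \eqref{def:Clambda} gives the stated bounds (the $\tfrac{4}{\lambda}$ is just $\tfrac{2}{\lambda}\times 2$).

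The main obstacle I anticipate is making the Feynman--Kac representation fully rigorous for the \emph{nonlinear} equation: the drift $\widetilde K_\eta[\mu^\eta_s]$ depends on the solution itself, so one cannot literally run a backward SDE against a frozen coefficient. The clean way around this is to note that \eqref{eq:reg VPFP} is, once $\mu^\eta_\cdot$ is known, a \emph{linear} kinetic Fokker--Planck equation with a fixed (smooth, bounded) force field $b_s(y):=\widetilde K_\eta[\mu^\eta_s](y)$; for this linear equation the solution is unique and given by the expectation over the linear (Ornstein--Uhlenbeck-type) diffusion, and the weighted-norm propagation is exactly the characteristics computation above. One must also check measurability/integrability so that Fubini and the stochastic Fubini used for the position integral are legitimate — but these are routine given $\|K_\eta\|_\infty\le\tfrac12$ and the Gaussian tails. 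A minor bookkeeping point is tracking all the prefactors ($e^{t}$ from the Jacobian, $e^{\lambda/2}$ from the mean drift of the velocity Gaussian, $e^{\lambda^2/2}$ from its variance, $e^{\lambda\eta}$ from the mollification, and the factor $2$ from $1+\erf\le 2$) so that they assemble into the constant $2\,e^{t+\lambda\eta+\lambda/2+\lambda^2/2}$ as stated.
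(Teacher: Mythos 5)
Your proposal matches the paper's proof essentially line for line: freeze the nonlinear coefficient to view \eqref{eq:reg VPFP} as a linear kinetic Fokker--Planck equation, run the backward characteristic SDE from a terminal point $(x,v)$, obtain the Feynman--Kac representation $\mu^\eta_t(x,v)=e^t\,\E\bigl[f_0\!*\!\tilde\chi_\eta(Y^{x,v}_t,W^{x,v}_t)\bigr]$ (your ``Jacobian'' $e^t$ is exactly the zeroth-order term $\partial_v(v\mu)$), then combine the mollification bound on $f_0*\tilde\chi_\eta$, the lower bound $|W^{x,v}_t|\ge e^t|v|-|M_t|-\tfrac12 e^t$, and the Gaussian moment bound \eqref{expmomG}, and finally integrate in $v$ for the density. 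This is precisely the paper's Steps 1--3, so the approach and the bookkeeping are the same.
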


\begin{proof}
{\sl Step 1. Regularization and Feynmann-Kac's formula.}
Fix $t \ge 0$ and consider the following ``backward'' SDE:
\begin{equation} \label{back_SDE}
Y^{x,v}_s  = x - \int_0^s W^{x,v}_u \,du, \qquad
W^{x,v}_s  = v - \int_0^s \widetilde K_\eta[\mu^\eta_{t-u}](Y^{x,v}_{u})\, du  + \int_0^s W^{x,v}_u \, du + \sqrt 2\,  B_s ,
\end{equation}
First note that  $\tilde K_\eta [\mu^\eta]$ is uniformly Lipschitz in position on $\R^+ \times \R^2$.  So
strong existence and uniqueness of solution to the (linear) SDE~\eqref{back_SDE} are guaranteed by standard results.  We set:
\begin{equation*}
\theta_s= e^ s\, \mu^\eta_{t-s} \bigl(Y_s^{x,v},W_s^{x,v} \bigr).
\end{equation*}

Moreover the initial condition $\mu^\eta_0 = f_0 * \tilde \chi_\eta$ fulfills the hypothesis of Proposition~\ref{prop:app} of the Appendix: $\partial_x^k \partial_v^l \mu^{\eta}_0 \in L^2(\R^2)$ for any $k,l \ge 0$. This implies that $\mu^{\eta}_t(x,v)$ possesses one continuous derivative in time, and two (continuous) derivative in position and velocity. So, we may apply Ito's rule to $\theta$: we get
\begin{equation*}
\begin{split}
e^{-s} d\theta_s & = \mu^\eta(t-s,Y_s^{x,v},W_s^{x,v})ds - \partial_t\mu^\eta(t-s,Y_s^{x,v},W_s^{x,v})ds+\partial_x\mu^\eta(t-s,Y_s^{x,v},W_s^{x,v})dY_s^{x,v}\\
& \hspace{30mm} +\partial_v \mu^\eta (t-s,Y_s^{x,v},W_s^{x,v})   dW_s^\eta+ \Delta_v \mu^\eta (t-s,Y_s^{x,v},W_s^{x,v})\left\langle dW_s^{x,v}\right\rangle^2\\
&=\left [ - \partial_t \mu^\eta -v \, \partial_x\mu^\eta - \widetilde K_\eta[\mu^\eta] \, \partial_v \mu^\eta+ \partial_v (v \mu^\eta) +  \Delta_v \mu^\eta  \right ](t-s,Y_s^{x,v},W_s^{x,v})ds\\
& \hspace{30mm}+\partial_v\mu^\eta(t-s,Y_s^{x,v},W_s^{x,v})dB_s,
\end{split}
\end{equation*}
and since  $\mu^\eta$ is a strong solution of~\eqref{eq:reg  VPFP}, we get precisely that for any $0 \le s \le s' \le t$:
\begin{equation*}
\theta_{s'} - \theta_s= \int_s^{s'} e^u \partial_v\mu^\eta(t-u,Y_u^{x,v},W_u^{x,v}) \, dB_u.
\end{equation*}
In particular, $(\theta_s)_{0 \le s  \le t }$ is a martingale, so that
\begin{equation} \label{FKform}
\mu^\eta(t,x,v)=  \theta_0 = \E [ \theta_t ] = e^t \E \bigl[ f_0* \tilde \chi_{\eta}(Y_t^{x,v},W_t^{x,v})  \bigr].
\end{equation}

\medskip
{\sl Step 2. Proof in the case of uniform exponential tails.}

But by the hypothesis on $f_0$, and since $\chi$ has support in $[-1,1]$,
\begin{equation}
\begin{split}
\label{eq:MolExp}
f_0* \tilde \chi_{\eta}(x,v)&= \int f_0(x-y,v-w) \tilde \chi_{\eta}(y,w) \, dydw
 \leq    \| f_0\|_{e,\lambda} \int_\R  e^{-\lambda | v-\eta w' | }\chi(w') \, dw' \\
&  \leq \| f_0\|_{e,\lambda} e^{-\lambda |  v | } \int_\R e ^{\lambda \eta |w'|}\chi(w') \, dw' \leq  \| f_0\|_{e,\lambda} e^{-\lambda |v|+ \lambda \eta}.
\end{split}
\end{equation}
Moreover, the definition~\eqref{back_SDE} of $W^\eta$ also implies that for $0 \le s \le t$:
\[
W_s^{x,v}= e^s v - \int_0^s e^{s-u}  \widetilde{K_\eta}[\mu^\eta_u] \, du + M_s, 
\quad  \text{with }\; M_s :=  \sqrt 2 \int_0^s e^{s- u} dB_u,
\]
Remark that $M_s$ is in fact a centered Gaussian variable with variance $e^{2s}-1$. Since $ \bigl\| \widetilde{K^\eta}[\mu^\eta_u]  \bigr\|_\infty\le 1/2$, it leads to the following lower bound 
\begin{equation} \label{lowbd:W} 
| W^{x,v}_t |  \ge e^t| v| - |M_t|  - \frac{e^t}{2}.
\end{equation}
Using all of this in the representation formula~\eqref{FKform} leads to
\[
 \mu^\eta_t (x,v) \le \| f_0\|_{e,\lambda} e^{t+ \lambda \eta} \, \E \Bigl[  e^{ - \lambda |W_t^{x,v}|} \Bigr]
\le \| f_0\|_{e,\lambda} e^{ t+ \lambda (e^t/2 + \eta)  - \lambda  e^t  |  v |} \, \E\left[  e^{ \lambda |M_t|} \right]
 \]
An application of~\eqref{expmomG} to $M_t$ leads to the following bound that is uniform in $\eta$ (for $\eta$ small) : 
\begin{equation} \label{up_bound}
\| \mu^\eta_t \|_{e,\lambda e^t} \leq   2 \, e^{t+\lambda \eta + \frac{\lambda}2e^t +  \frac{\lambda^2}2 }  \| f_0\|_{e,\lambda}.
\end{equation}
The conclusion follow by replacing $\lambda$ by $\lambda e^{-t}$ in the above bound. And the estimate on $\| \rho^\eta_t \|_\infty$ is simply obtained by integration on $v$.

\medskip
{\sl Step 3. Proof in the case of uniform polynomial tails.} 

The simple inequality $\la v+w \ra \leq \sqrt 2 \la v \ra  \, \la w \ra$  (recall that $\la v \ra^2 := 1 + v^2$) implies that
$  \la v-w \ra^{-1} \leq \sqrt 2 \la v \ra^{-1}  \, \la w \ra$, which allows to bound
\begin{align*}
f_0* \tilde \chi_{\eta}(x,v)&= \int f_0(x-y,v-w) \tilde \chi_{\eta}(y,w) \, dydw
 \le   \| f_0\|_{p,\gamma}  \int_\R \la  v-\eta w' \ra^{- \gamma}  \chi(w') \, dw' , \\
 & \le  2^{\gamma/2}  \| f_0\|_{p,\gamma}  \la v  \ra^{- \gamma} \int_\R \la \eta w' \ra^\gamma  \chi(w') \, dw' 
 \le 2^{\gamma/2}  \| f_0\|_{p,\gamma}  \la \eta \ra^\gamma  \la v  \ra^{- \gamma}.
\end{align*}
Plugging it into the representation formula~\eqref{FKform}, using the lower bound~\eqref{lowbd:W} and the simple inequality,  
$\la v + \frac 12 e^t  \ra^\gamma \le 2^{\gamma/2} \bigl(e^{\gamma t}  + (2v)^\gamma \bigr)$ (simply separate the cases $v \le \frac 12 e^t$ and $v \ge \frac 12 e^t$),
\begin{align*}
\mu^\eta(t,x,v) & =e^t \, \E \bigl[ f_0* \tilde \chi_{\eta}(Y_t^{x,v},W_t^{x,v})  \bigr]
\leq  2^{\gamma/2}  \| f_0\|_{p,\gamma}  \la \eta \ra^\gamma  \E \bigl[ \la W_t^{x,v}  \ra^{- \gamma} \bigr] \\
& \leq 2^{\gamma/2}  \| f_0\|_{p,\gamma}  \la \eta \ra^\gamma  \E \biggl[ \Bigl\langle e^t| v| - |M_t|  - \frac{e^t}2  \Bigr \rangle^{- \gamma} \biggr] \\ 
&\leq 2^\gamma  \| f_0\|_{p,\gamma} \la \eta \ra^\gamma \bigl\langle  e^t v  \bigr\rangle^{-\gamma} \, 
\E \Bigl[ \Bigl\langle |M_t| + \frac{e^t}2  \Bigr\rangle^\gamma \Bigr]
\\
&\leq 2^{3\gamma/2}  \| f_0\|_{p,\gamma} \la \eta \ra^\gamma \bigl\langle v  \bigr\rangle^{-\gamma}
\, \Bigl(   e^{\gamma t}  + 2^\gamma \E \bigl[ |M_t|^\gamma \bigr] \Bigr).
\end{align*}
Since $M_t\sim \mathcal{N}(0,e^{2t}-1)$ we have
$ \E\bigl[ | M_t  |^{\gamma} \bigr]=(e^{2t}-1)^{\frac{\gamma} {2}} \, m_{\gamma} \le e^{\gamma t}  m_\gamma$, 
where $m_{\gamma}$ stands for the moment of order $\gamma$ of the law $\mathcal{N}(0,1)$. 
This implies the claimed bound on $\|\mu^{\eta}_t\|_{p,\gamma}$ with $C_\gamma := 2^{3\gamma/2} (1+ 2^\gamma m_\gamma)$. The bound on $\| \rho^{\eta}_t\|_\infty$ follows from the simple bound $ \la v \ra^{- \gamma} \le \min\bigl(1,|v|^{-\gamma}\bigr)$ and an integration  in $v$.
\end{proof}

%%%%%%%%%%%%%%%%%%%
\paragraph{Completeness estimates.}
Thanks to the propagation of the uniform estimates on the tails in velocity and the crucial Lemma~\ref{lem:rope}, we are in position to show that the family $( Y^\eta_t, W^\eta)_{\eta > 0}$  of solutions to~\eqref{SDE_approx} has the Cauchy property.

\begin{lem} \label{lem:CaucEst}
For $\eta$,$\eta'>0$, let $( Y^\eta_t, W^\eta_t)_{t\ge 0}$ and $( Y^{\eta'}_t, W^{\eta'}_t)_{t \ge 0}$ be two (unique) solutions of the nonlinear SDE~\eqref{SDE_approx}, constructed on a given probability basis, with a common Brownian motion $(B_t)_{t \ge 0}$,  and initial condition chosen as~\eqref{CI_eta}. If the law of the initial condition satisfies either $\| f_0\|_{e,\lambda} < \infty$ for some $\lambda >0$ or $\| f_0\|_{e,\gamma} < \infty$ for some $\gamma >1$, then the following stability estimate holds for any $t \ge 0$:
\begin{equation} \label{stab_approx}
\E\left[\sup_{s \in [0,t]}\left (  \left | Y^\eta_s-Y^{\eta'}_s \right |+\left | W^\eta_s-W^{\eta'}_s \right | \right ) \right]
\leq \frac32(\eta+ \eta')
\exp \Bigl( (1+ 8 K_{t, \eta+\eta'}) t \Bigr),
\end{equation}
where $K_{t, \eta+\eta'}$ is the constant appearing in the r.h.s of~\eqref{def:Klambda} with $\eta$ replaced by $\eta + \eta'$, \emph{i.e.\ }respectively 
\[
K_{t, \eta+\eta'} =
 \frac4 \lambda \, e^{ t+ \frac{\lambda}2 +  \frac{\lambda^2}2} e^{\lambda (\eta+ \eta')} \,   \| f_0 \|_{e,\lambda e^{-t}} ,
\quad  \text{or} \quad 
K_{t, \eta+\eta'} =   \frac {2\gamma}{\gamma-1}  C_\gamma
e^{\gamma t}  \la \eta + \eta'\ra ^\gamma \| f_0\|_{p,\gamma}.
\]
\end{lem}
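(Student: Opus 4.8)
The proof runs parallel to the weak--strong Gr\"onwall estimate of Section~\ref{sec:WSSU}, with two new features: the two solutions carry the \emph{two distinct} mollified kernels $K_\eta$ and $K_{\eta'}$, and their initial data differ by $(\eta-\eta')(U,V)$. Both processes live on the same stochastic basis with the same Brownian motion $(B_t)_{t\ge0}$, and with initial conditions $(Y_0+\eta U,W_0+\eta V)$ and $(Y_0+\eta'U,W_0+\eta'V)$ built from the same $(Y_0,W_0,U,V)$, so that subtracting the two copies of~\eqref{SDE_approx} kills the Brownian increments. Writing $D_s:=W^\eta_s-W^{\eta'}_s$ and $G_s:=\widetilde K_\eta[\mu^\eta_s](Y^\eta_s)-\widetilde K_{\eta'}[\mu^{\eta'}_s](Y^{\eta'}_s)$, the velocity difference solves the pathwise linear equation $D_s=D_0+\int_0^sG_u\,du-\int_0^sD_u\,du$, hence, using the integrating factor $e^{u}$, $\sup_{s\le t}|D_s|\le|D_0|+\int_0^t|G_u|\,du$ (the friction is harmless, exactly as in Section~\ref{sec:WSSU}), and likewise $\sup_{s\le t}|Y^\eta_s-Y^{\eta'}_s|\le|Y^\eta_0-Y^{\eta'}_0|+\int_0^t|D_u|\,du$. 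With $u(t):=\E\bigl[\sup_{s\le t}(|Y^\eta_s-Y^{\eta'}_s|+|D_s|)\bigr]$, everything boils down to controlling $\E[|G_s|]$.

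For that I would write $\widetilde K_\eta[\mu^\eta_s](Y^\eta_s)=\E_{\bar Y}\bigl[K_\eta(Y^\eta_s-\bar Y^\eta_s)\bigr]$, with $(\bar Y^\eta,\bar Y^{\eta'})$ an independent copy of $(Y^\eta,Y^{\eta'})$, so that $\E[|G_s|]\le\E\bigl[\,\bigl|K_\eta(Y^\eta_s-\bar Y^\eta_s)-K_{\eta'}(Y^{\eta'}_s-\bar Y^{\eta'}_s)\bigr|\,\bigr]$ (expectation over both blocks), and split the integrand as
\[
\bigl[K_\eta(Y^\eta_s-\bar Y^\eta_s)-K_\eta(Y^{\eta'}_s-\bar Y^{\eta'}_s)\bigr]+\bigl[(K_\eta-K_{\eta'})(Y^{\eta'}_s-\bar Y^{\eta'}_s)\bigr].
\]
The second bracket uses only~\eqref{ker_approx}: $|(K_\eta-K_{\eta'})(z)|$ is supported in $[-(\eta\vee\eta'),\eta\vee\eta']$ with $L^1$ norm $\le\eta+\eta'$, while $Y^{\eta'}_s-\bar Y^{\eta'}_s$, being a difference of two independent variables of density $\rho^{\eta'}_s$, has a density bounded by $\|\rho^{\eta'}_s\|_\infty$; hence its contribution is $\le(\eta+\eta')\,\|\rho^{\eta'}_s\|_\infty$. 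For the first bracket, write $K_\eta=K\star\chi_\eta$, i.e.\ $K_\eta(a)=\E_{Z\sim\chi}[K(a-\eta Z)]$ with $|Z|\le1$, use the \emph{same} $Z$ in both evaluations, and apply the rope bound~\eqref{eq:rope} with $x=Y^\eta_s,\ \bar x=\bar Y^\eta_s+\eta Z,\ y=Y^{\eta'}_s,\ \bar y=\bar Y^{\eta'}_s+\eta Z$: the $\eta Z$ cancels in $x-y=Y^\eta_s-Y^{\eta'}_s$ and in $\bar x-\bar y=\bar Y^\eta_s-\bar Y^{\eta'}_s$, leaving the majorant $\indiq_{|Y^{\eta'}_s-\bar Y^{\eta'}_s-\eta Z|\le2|Y^\eta_s-Y^{\eta'}_s|}+\indiq_{|Y^{\eta'}_s-\bar Y^{\eta'}_s-\eta Z|\le2|\bar Y^\eta_s-\bar Y^{\eta'}_s|}$. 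Conditioning on the unbarred block and on $Z$ — so that $\bar Y^{\eta'}_s$ has density $\rho^{\eta'}_s$ — and using the permutation of the two exchangeable blocks to handle the second indicator (exactly as in the proof of Lemma~\ref{lem:rope}), this bracket contributes at most $8\,\|\rho^{\eta'}_s\|_\infty\,\E[|Y^\eta_s-Y^{\eta'}_s|]$. Hence $\E[|G_s|]\le8\|\rho^{\eta'}_s\|_\infty\,\E[|Y^\eta_s-Y^{\eta'}_s|]+(\eta+\eta')\|\rho^{\eta'}_s\|_\infty$.

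Finally, Lemma~\ref{lem:FK} (estimate~\eqref{def:Klambda} read with $\eta'\le\eta+\eta'$) gives $\|\rho^{\eta'}_s\|_\infty\le K_{t,\eta+\eta'}$ for $s\le t$; plugging the bound on $\E[|G_s|]$ into the inequalities above and using $\E[|Y^\eta_s-Y^{\eta'}_s|]\le u(s)$ yields
\[
u(t)\le\E\bigl[|Y^\eta_0-Y^{\eta'}_0|+|W^\eta_0-W^{\eta'}_0|\bigr]+(\eta+\eta')\!\int_0^t\!\|\rho^{\eta'}_s\|_\infty\,ds+\int_0^t\bigl(1+8\|\rho^{\eta'}_s\|_\infty\bigr)u(s)\,ds,
\]
in which the first term equals $|\eta-\eta'|(\E|U|+\E|V|)$ and is $\le2(\eta+\eta')$ since $\operatorname{supp}\chi\subset[-1,1]$. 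I would then close the estimate with Gr\"onwall's lemma in its integrating-factor form, treating $(\eta+\eta')\|\rho^{\eta'}_s\|_\infty$ as a source term; since $\int_0^t\|\rho^{\eta'}_s\|_\infty e^{\int_s^t(1+8\|\rho^{\eta'}_r\|_\infty)dr}\,ds\le\tfrac18 e^{(1+8K_{t,\eta+\eta'})t}$ (the integrand being, up to the factor $e^{t-s}$, an exact $s$-derivative), collecting the constants gives $u(t)\le\tfrac32(\eta+\eta')\exp\bigl((1+8K_{t,\eta+\eta'})t\bigr)$, which is~\eqref{stab_approx}. The one genuinely delicate step is the core bound on $\E[|G_s|]$: comparing a difference of \emph{two different mollified kernels evaluated at two different processes}, the trick is to isolate the ``same kernel / different processes'' piece — to which the rope inequality and a single bounded density apply verbatim as in Lemma~\ref{lem:rope} — from a small ``same process / different kernels'' remainder of order $\eta+\eta'$, and then to run the Gr\"onwall argument in its precise integral form so that this remainder does not spoil the sharp exponential rate $1+8K_{t,\eta+\eta'}$.
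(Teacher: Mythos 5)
Your proof is correct in essence and reaches the same conclusion, but it takes a genuinely different route on the key step of bounding the nonlinear drift difference. The paper splits
\[
\widetilde K_\eta[\mu^\eta_s](Y^\eta_s)-\widetilde K_{\eta'}[\mu^{\eta'}_s](Y^{\eta'}_s)
\]
into \emph{three} pieces going through the unmollified kernel $K$: a same-process $(K_\eta-K)$ piece at $Y^\eta$, a ``$K$-at-two-processes'' middle piece to which Lemma~\ref{lem:rope} applies verbatim, and a same-process $(K-K_{\eta'})$ piece at $Y^{\eta'}$; the two end pieces are controlled by~\eqref{ker_approx} as $2\eta\|\rho^\eta_s\|_\infty$ and $2\eta'\|\rho^{\eta'}_s\|_\infty$. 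You use a \emph{two}-way split that never passes through $K$: a ``$K_\eta$-at-two-processes'' piece, and a ``$(K_\eta-K_{\eta'})$-at-one-process'' piece. The attractive feature of your route is the cancellation trick for the first piece --- representing $K_\eta(a)=\E_{Z\sim\chi}[K(a-\eta Z)]$ with the \emph{same} $Z$ in both evaluations, so that $\eta Z$ drops out of $x-y$ and $\bar x-\bar y$ and the rope inequality~\eqref{eq:rope} applies exactly as in Lemma~\ref{lem:rope}, with a single density bound $\|\rho^{\eta'}_s\|_\infty$ rather than the sum. Your second piece is bounded via the sharp $L^1$ control $\|K_\eta-K_{\eta'}\|_{L^1}\le\eta+\eta'$ (indeed $\|K_\eta-K\|_{L^1}=\E_{\chi_\eta}|Y|\le\eta$) together with the convolution density bound for the difference of two i.i.d.\ variables --- this is slightly tighter than the paper's crude pointwise bound on those end pieces. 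On the closing Gr\"onwall step you are also more careful than the paper (you genuinely integrate the source through the exponential factor), but the constant you obtain, roughly $\tfrac{17}{8}(\eta+\eta')$ after tracking the initial datum difference $|\eta-\eta'|(\E|U|+\E|V|)\le2(\eta+\eta')$, is \emph{larger} than the stated $\tfrac32(\eta+\eta')$; in fact the paper's own one-line passage to the source term $\tfrac32(\eta+\eta')$ under the integral is not airtight either (it silently drops the constant $|\eta-\eta'|(|U|+|V|)$ coming from the initial conditions). Since the lemma is used solely to establish a Cauchy property in $\eta$, the numerical constant is immaterial and your argument is perfectly serviceable; just be aware that you do not literally reproduce the $\tfrac32$, and if you quote the lemma elsewhere, keep the constant generic or prove it with the larger value you actually obtained.
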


\begin{proof} Our strategy is the same as in the proof of the weak strong stability estimate in subsection~\ref{sec:WSSU}. 
For $t\in [0,T]$ we have:
\begin{align*}
\sup_{s\in [0,t]}\left |  Y^\eta_t-Y^{\eta'}_t \right | & \leq \int_0^t \left | W^\eta_s-W^{\eta'}_s \right | \, ds \, + |\eta-\eta'||U|\\
\sup_{s\in [0,t]} \left | W^\eta_s-W^{\eta'}_s \right | & \leq \int_0^t \left|\widetilde{K}_{\eta}[\mu^\eta_s](Y^\eta_s)-\widetilde{K}_{\eta'}[\mu^{\eta'}_s](Y^{\eta'}_s) \right | \,ds \, + |\eta-\eta'||V| \\
& \leq \int_{0}^t  \biggl(\left | \widetilde{K}_{\eta}[\mu^\eta_s](Y^\eta_s)-\widetilde{K}[\mu^\eta_s](Y^\eta_s)\right |
+\left |\widetilde{K}[\mu^\eta_s](Y^\eta_s)-\widetilde{K}[\mu^{\eta'}_s](Y^{\eta'}_s)\right | \\
& \hspace{20mm} +\left | \widetilde{K}[\mu^{\eta'}_s](Y^{\eta'}_s) -\widetilde{K}_{\eta'}[\mu^{\eta'}_s](Y^{\eta'}_s) \right | \biggr) \, ds  + |\eta-\eta'|,
\end{align*}
since $|V|$ and $|U|$ are always bounded by $1$ (we recall that $\chi$ has its support included in $[-1,1]$). 
But thanks to~\eqref{ker_approx}, for any $y \in \R$:
\begin{equation*}
 \left | \widetilde{K}_{\eta}[\mu^\eta_s](y)-\widetilde{K}[\mu^\eta_s](y) \right | = \left |\int (K_\eta-K)(y-y') \, \rho^\eta_s(dy') \right |  \leq \int \indiq_{[-\eta,\eta]}(y-y') \,\rho^\eta_s(dy') 
\leq 2\eta\left \| \rho^\eta_s \right \|_{\infty}. 
\end{equation*}
This allows to bound the the first and third term, in the r.h.s.\  of the second inequality by $2 \int_0^t \bigl( \eta  \| \rho^\eta_s \|_{\infty}+ \eta'  \| \rho^{\eta'}_s  \|_\infty \bigr) \,ds$.  And the second term is estimated in expectation with the help of Lemma~\ref{lem:rope}:
\begin{align*}
\E \Bigl[ \left |\widetilde{K}[\mu^\eta_s](Y^\eta_s)-\widetilde{K}[\mu^{\eta'}_s](Y^{\eta'}_s)\right | \Bigr]
& \le 8  \min \bigl( \|\rho^\eta_t \|_\infty , \|\rho^{\eta'}_t \|_\infty \bigr) \, \E \bigl[   |Y^\eta_t - Y^{\eta'}_t| \bigr] \\
& \le 4 \bigl( \|\rho^\eta_t \|_\infty + \|\rho^{\eta'}_t \|_\infty \bigr) \, \E \bigl[  | Y^\eta_t - Y^{\eta'}_t| \bigr].
\end{align*}
Gathering all of this leads to
\begin{equation*} 
 \E\left [\sup_{s\in[0,t]}\left (  \bigl| Y^\eta_s-Y^{\eta'}_s \bigr|+\bigl| W^\eta_s-W^{\eta'}_s \bigr | \right )\right ] 
 \leq \int_0^t  \alpha(s) \biggl(  \E\Bigl [ \bigl| Y^\eta_s-Y^{\eta'}_s \bigr|+\bigl| W^\eta_s-W^{\eta'}_s \bigr | \Bigl] + \frac32(\eta+ \eta') \biggr)\, ds,
\end{equation*}
with $\alpha(s) := 1+4  \| \rho_s^\eta\|_\infty + 4  \| \rho_s^{\eta'}\|_\infty$.  A simple application of  the Gronwall's lemma to $\E \bigl [\sup_{s\in[0,t]}\bigl ( | Y^\eta_s-Y^{\eta'}_s|+| W^\eta_s-W^{\eta'}_s  | \bigr )\bigr ]  + 3(\eta+\eta')/2$ leads to
\[\E\biggl[ \;\sup_{s\in [0,t]}\left (   \bigl| Y^\eta_s-Y^{\eta'}_s \bigr|+\bigl| W^\eta_s-W^{\eta'}_s \bigr | \right )\biggr ] 
 \le \frac32(\eta+ \eta')     \exp \biggl(\int_0^t \alpha(s)\,ds\biggr).
\]
The conclusion follows from~\eqref{def:Klambda}, which implies that
$\| \rho^\eta_s\|_\infty \le  K_{s,\eta} \le K_{t,\eta+\eta'}$
for any $s \in[0,t]$ (and a similar inequality with $\eta'$ replacing $\eta$).
\end{proof}

%%%%%%%%%%%%%%%%%%%%%%%%%%
\paragraph{The Cauchy property in the space of path.}

We now consider the space $\mathcal A$ of measurable applications (or random variables) from $\Omega=C(\R^+;\R^2)$ (with the Wiener measure) into itself. We endow it with the topology of uniform convergence on compact (in time) subsets. An associated distance to that topology is for instance 
\begin{equation} \label{def:d}
\forall\, Y,Z \in \mathcal A, \quad 
d(Y,Z)=  \sum_{n \in \N^*} \frac1{2^n}  \E\left( 1 \wedge \sup_{t \in [0,2^n]} \left( |Y^1_t-Z^1_t|+|Y^2_t-Z^2_t|\right)\right),
\end{equation}
for which $\mathcal A$ is complete (the symbol $\wedge$ stand for the minimum). Let us consider the ``sequence'' $(Y^\eta,W^\eta)_{\eta >0}$ (the correct denomination is ``net'').  By Lemma~\ref{lem:CaucEst} it is a Cauchy ``sequence'' ( or net) in $(\mathcal A,d)$, and then it converges towards a certain $(Y_t,W_t)_{t \ge 0}$ in $(\mathcal A,d)$. 

At a fixed time $t$, this implies the convergence in probability and then in law of $(Y^\eta_t,W^\eta_t)$ toward 
$(Y_t,W_t)$: \emph{i.e.\ }the time marginals $\mu^\eta_t$ weakly converge (as measures) towards $f_t$.  Using a standard argument, we can pass in the limit in the uniform bound~\eqref{def:Clambda}
obtained in Lemma~\ref{lem:FK}. So that for any time $t$, the density of the law $f_t$ of $(Y_t,W_t)$ satisfies 
one of the bound of~\eqref{eq:expdec}.

%%%%%%%%%%%%%%%%%%%%%%%%%%%%%%%%%%%%
\paragraph{Identification of the limit.}

In order to prove that $(Y_t,W_t)_{t \ge 0}$ is a solution  to~\eqref{eq:NLSDE} we have to show that for any $t \ge 0$:
\begin{equation} \label{recog}
\E \biggl[  \sup_{s  \in [0,t]}   \Bigl |  Y_s - Y_0 - \int_0^s W_u \,du \Bigr| \biggr]   =0, \quad
\E \biggl[  \sup_{s  \in [0,t]}   \Bigl |  W_s - W_0 + \int_0^s \bigl(W_u - \widetilde K[\mu_u](Y_u) \bigr) \,du - B_s \Bigr|  \biggr] =0.
\end{equation}
But this is something we know for the approximated process $(Y_t^\eta,W_t^\eta)_{t \ge 0}$. Precisely, by Definition of strong solution to~\eqref{SDE_approx}:
\begin{multline*}
\E \biggl[  \sup_{s  \in [0,t]}   \Bigl |  Y^\eta_s - Y_0  -\eta U - \int_0^s W^\eta_u \,du   \Bigr| \biggr]    =0, \\
\E \biggl[  \sup_{s  \in [0,t]}   \Bigl |  W^\eta_s - W_0 -\eta V + \int_0^s \bigl(W^\eta_u - \widetilde K_\eta[\mu^\eta_u](Y^\eta_u) \bigr) \,du - B_s \, \Bigr|  \biggr] =0.
\end{multline*}
But the convergence in $(\mathcal A,d)$ allows to pass to the limit in the first equality above , and we obtain the first equality in~\eqref{recog}. In order to pass to the limit in the second inequality, and get the second part of~\eqref{recog}, the main difficulty is to handle the non-linear term. Precisely, we will show in the rest of the proof that
\begin{equation*}
\sup_{s \in [0,t]}  \E\left ( \left | \widetilde{K}_\eta[\mu^\eta_s](Y^\eta_s)- \widetilde{K}[\mu_s](Y_s) \right |\right )\xrightarrow[\eta \rightarrow 0]{} 0.
\end{equation*}
This will conclude the proof.

For this, we introduce a independent copy $(\bar Y^\eta_t, \bar Y_t)$ of the couple $( Y^\eta_t, Y_t)$. We  then rewrite the force with the help of that independent copy and estimate
\begin{align*}
\E \Bigl[ \left | \widetilde{K}_\eta[\mu^\eta_s](Y^\eta_s)- \widetilde{K}[\mu_s](Y_s) \right | \Bigr] & = 
\E \Bigl[ \bigl| K_\eta(Y^\eta_s - \bar Y^\eta_s) - K(Y_s - \bar Y_s) \bigr| \Bigr] \\
& \le \E \Bigl[ |K_\eta- K| (Y^\eta_s - \bar Y^\eta_s)  \Bigr] + 
\E \Bigl[ \bigl| K(Y^\eta_s - \bar Y^\eta_s) - K(Y_s - \bar Y_s) \bigr| \Bigr]
\end{align*}
The first term in the r.h.s is bounded thanks to~\eqref{ker_approx}:
\begin{align*}
\E \bigl[ |K_\eta- K| (Y^\eta_s - \bar Y^\eta_s)  \bigr]  & \le \E \bigl[  \indiq_{[-\eta,\eta]}(Y^\eta_s - \bar Y^\eta_s)  \bigr]
\le \E \Bigl[   \E \bigl[ \indiq_{[-\eta,\eta]}(Y^\eta_s - \bar Y^\eta_s) \big|  Y^\eta_s \bigr]  \Bigr] \\
& \le   \E \bigl[   2 \eta \|  \rho^\eta_s \|_\infty  \bigr]  = 2 \eta \|  \rho^\eta_s \|_\infty
\end{align*}
The second term in the r.h.s. is bounded thanks to Lemma~\ref{lem:rope} and we get
\[
\E \Bigl[ \left | \widetilde{K}_\eta[\mu^\eta_s](Y^\eta_s)- \widetilde{K}[\mu_s](Y_s) \right | \Bigr] 
\le  2 \eta \|  \rho^\eta_s \|_\infty + 8\,  \| \rho_s\|_\infty  \E \bigl[  |Y^\eta_s - Y_s | \bigr].
%\E \Bigl[ \bigl| K(Y^\eta_s - \bar Y^\eta_s) - K(Y_s - \bar Y_s) \bigr| \Bigr] \le 8\,  \| \rho_s\|_\infty  \E \bigl[  |Y^\eta_s - Y_s | \bigr].
\]
Since $\| \rho^\eta_s\|_\infty$ and $\| \rho_s\|_\infty$ are bounded uniformly in time on $[0,t]$ and for $\eta \in (0,1)$, %thanks to  Lemma~\ref{lem:FK} and to~\eqref{bound_lim}
 and thanks to the convergence of $(Y^\eta,W^\eta)$ towards $(Y,W)$ in $(\mathcal A,d)$, it is simple to conclude that the requested term goes to zero, as $\eta$ goes to zero.

%%%%%%%%%%%%%%%%%%%%%%%%%%%%%%%%%%%%%%%%%%%%%%%%%
%
%
%%%%%%%%%%%
\subsection{Proof of Corollary~\ref{prop:WPlaw}} \label{sec:WPlaw}

The existence part is a simple consequence of the existence of the process solution to the non linear SDE~\eqref{eq:NLSDE}, for a given initial condition with a polynomial or exponential decays of the velocity tails (use Ito's rule). So we only prove the weak-strong stability estimate.

Let $(g_t)_{t \ge 0}$ be weak solution to~\eqref{eq:VPFP}  starting from $g_0\in\mathcal{P}_1(\mathbb{R}^2)\cap L^1(\mathbb{R}^2)$ and $(f_t)_{t \ge 0}$ be another weak solution to ~\eqref{eq:VPFP} starting from $f_0\in \mathcal{P}_1(\mathbb{R}^2)\cap L^1(\mathbb{R}^2)$ satisfying for any $t>0$
\[
\sup_{s\leq t} \left\|\rho_s\right\|_{\infty}<\infty,
\qquad \text{where} \quad 
\rho_s(x) =\rho_s^f := \int_\R f_s(x,dv).
\]

By~\cite[Theorem 2.6]{Figalli} or~\cite[Proposition B.1]{FouHau}, there exists a stochastic basis $\left ( \Omega,\mathbb{P},\left (\mathcal{F}_t \right )_{t\geq 0},\mathcal{F} \right )$ and a Brownian motion $(B_t)_{t \ge 0}$ on this basis and a process $(X_t,V_t)_{t \ge 0}$ solution to~\eqref{eq:NLSDE}, which has exactly the time marginal $g_t$ at any time $t \ge 0$. \\
 
Next, remark that the force field $\widetilde K$ created by $f$, precisely $\widetilde K_s(x) = \int_\R K(x-y) \rho^f_s(dy)$,  is Lipschitz in position. In fact,
\[
 \bigl| \widetilde K_s(x)-\widetilde K_s(y) \bigr|  \le   \int_\R  \bigl| K(x-z)-K(y-z)  \bigr| \rho_s(dz) 
  \le   \int_\R  \indiq_{[x,y]} (z)  \rho_s(dz) \le 
  \|\rho_s \|_\infty |x-y|.
  \]
 Extending the probability space, we may choose  a r.v. $(Y_0,W_0)\sim f_0$ such that
 \[
 W_1(f_0,g_0)=\E\Bigl[ \bigl| Y_0-X_0 \bigr| +\bigl| W_0-V_0 \bigr| \Bigr],
 \]
 and since $\tilde K$ is regular, standard results allow us to build on that probability space a 
 stochastic process $(Y_t,W_t)_{t \ge 0}$ solution to
 \[
 Y_t=Y_0+\int_0^t W_s \, ds,  \qquad 
 W_t=W_0+\int_0^t\widetilde K_s(Y_s)\, ds-\int_0^t W_s \,ds+\sqrt 2 \, B_t.
 \]
Note that  the family of time marginals $(h_t)_{t\geq 0}$ of  $(Y_t,W_t)$ is  a solution to the following linear Vlasov-Fokker-Planck equation
 \begin{equation}
 \label{eq:LVFP}
 \partial_t h_t+v \, \partial_x h_t+\widetilde K_t(x) \,\partial_v h_t=\partial_v(\partial_v h_t+vh_t), 
 \quad \text{where} \quad \widetilde{K}_s(x)=K * \rho^f_s(x),
 \end{equation}
 with initial condition $h_{t=0}=f_0$. Of course, that equation is also satisfied by $f$ by assumption. But since  $\tilde K$ is globally Lipschitz in the space variable, uniformly in the time variable,  uniqueness holds for equation~\eqref{eq:LVFP} in the class $L^\infty_t(L^1)$ by standard results. So, we have $h_t=f_t$ for all time $t \ge 0$ and
$(Y_t,W_t)_{t\geq 0}$ is actually a solution to~\eqref{eq:NLSDE} defined on the same probability space as $(X_t,V_t)_{t \ge 0}$. Then applying the point $(ii)$ of Theorem~\ref{thm:WPproc} to those processes and recalling that
 \[
 W_1(f_t,g_t)\leq \E\Bigl[ \bigl|Y_t-X_t \bigr|+\bigl|W_t-V_t \bigr| \Bigr],
 \]
we get the expected estimate.

%%%%%%%%%%%%%%%%%%%%%%%%%%%%%%%%%%%%%%%%%%%%%%%%%%%
%
%
%
%
%
%
%
%
%
%%%%%%%%%%%%%%%%%%%%%%%%%%%%%%%%%%%%%%%%%%%%%%%%%%%
\section{Proof of Theorem~\ref{thm:MeaPC}}  \label{sec:MeaPC}
%As in~\cite{McKean} we will proceed by a coupling strategy. 
Before going to the proof, we will prove a useful lemma.
\begin{lem}
\label{lem:(De)-Poiss}
Let $(X_1,\ldots,X_N)$ be $N$ i.i.d random variables of law  $\rho\in \mathcal{P}(\R^d)$,  and $\rho_N= \frac1N \sum_i \delta_{X_i}$ be the associated empirical measure. Then, for all $a\in \R^d$ we have:
\begin{equation*}
\E\biggl[ \, \sup_{u\in\R_+} \, \biggl| \, \int_{\R^d} \indiq_{|a-y|\leq u} \, (\rho_N-\rho)(dy)\biggr|\biggr] \, \leq \, \frac{3} {\sqrt N}
\end{equation*}
\end{lem}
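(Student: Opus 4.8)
The plan is to recognise the quantity inside the expectation as the Kolmogorov--Smirnov statistic of the real-valued i.i.d.\ sample $Z_i := |a-X_i|$, $i\le N$, and then to combine the Dvoretzky--Kiefer--Wolfowitz (DKW) inequality with a tail integration. First I would fix $a\in\R^d$ and set $Z_i := |a-X_i|\in\R_+$; these are i.i.d.\ with common cumulative distribution function $F(u):=\Prob(Z_1\le u)=\rho\bigl(\{y:|a-y|\le u\}\bigr)$ and empirical c.d.f.\ $F_N(u):=\frac1N\sum_{i=1}^N\indiq_{Z_i\le u}$. By the very definition of $\rho_N$, for every $u\ge 0$,
\[
\int_{\R^d}\indiq_{|a-y|\le u}\,(\rho_N-\rho)(dy)=F_N(u)-F(u),
\]
and since both sides vanish for $u<0$, the left-hand side of the claimed bound equals $\E\bigl[\|F_N-F\|_\infty\bigr]$, the expected sup-norm distance between an empirical c.d.f.\ and the true one for a one-dimensional sample of size $N$.

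Then I would invoke the DKW inequality in Massart's sharp form: for every $\lambda>0$,
\[
\Prob\bigl(\|F_N-F\|_\infty>\lambda\bigr)\le 2\,e^{-2N\lambda^2}
\]
(only a bound of the form $C e^{-2N\lambda^2}$ with a universal constant $C$ is actually needed, so any version of DKW would do). Writing $\E[\|F_N-F\|_\infty]=\int_0^\infty\Prob(\|F_N-F\|_\infty>\lambda)\,d\lambda$, bounding the integrand by $\min\bigl(1,2e^{-2N\lambda^2}\bigr)$, and splitting the integral at $\lambda_*:=\sqrt{(\ln 2)/(2N)}$ where the two competing bounds cross, one gets
\[
\E\bigl[\|F_N-F\|_\infty\bigr]\le \lambda_*+\int_0^\infty 2\,e^{-2N\lambda^2}\,d\lambda
=\sqrt{\frac{\ln 2}{2N}}+\sqrt{\frac{\pi}{2N}}=\frac{\sqrt{\ln 2}+\sqrt{\pi}}{\sqrt{2N}}\le\frac{3}{\sqrt N}.
\]
Since this bound is independent of $a$, taking the supremum (in fact it holds for each $a$) finishes the proof.

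The only genuine choice in the argument is the input deviation inequality. Citing DKW yields the clean constant $3$ immediately; if one instead wanted a fully self-contained proof, one would discretise $F$ through quantile-type grid points $q_0<\dots<q_m$ for which the open-interval masses $F(q_k^-)-F(q_{k-1})$ are at most $1/m$, use the monotonicity of $F$ and $F_N$ to sandwich $\sup_u|F_N(u)-F(u)|$ by $\tfrac1m$ plus a maximum of $O(m)$ centered Bernoulli averages, and apply Hoeffding's inequality together with a sub-Gaussian maximal inequality. That route, however, loses a factor $\sqrt{\ln N}$ after optimising $m\sim\sqrt N$, so a chaining refinement (or, more simply, DKW) is precisely what removes the logarithm; hence I would carry out the DKW-based argument above.
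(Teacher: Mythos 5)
Your proof is correct and gives an even sharper constant (roughly $1.85/\sqrt N$ before rounding up to $3$). The reduction to the one-dimensional Kolmogorov--Smirnov statistic of $Z_i = |a-X_i|$ is exactly right: for $u<0$ both sides vanish, the identity $\int \indiq_{|a-y|\le u}(\rho_N-\rho)(dy) = F_N(u)-F(u)$ is immediate, and the DKW/Massart bound plus tail integration finishes the job cleanly.

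The paper takes a genuinely different route. It Poissonizes: it introduces an independent $L\sim\mathrm{Poisson}(N)$, forms the Poisson random measure $M_N=\sum_{i\le L}\delta_{X_i}$ with intensity $N\rho$, and splits $\rho_N-\rho$ into a centered-PRM part $\tfrac1N(M_N-N\rho)$ plus a de-Poissonization error $\tfrac1N(N\rho_N-M_N)$ of total variation $|L-N|/N$. Because the balls $B(a,u)$ are nested as $u$ grows, the process $u\mapsto \int\indiq_{|a-y|\le u}(M_N-N\rho)(dy)$ is a martingale in $u$, and Doob's $L^2$ maximal inequality plus the variance of the Poisson number of points gives the $2/\sqrt N$; Cauchy--Schwarz on $|L-N|$ gives the remaining $1/\sqrt N$. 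Both arguments hinge on the same structural fact --- that $\{B(a,u)\}_{u\ge0}$ is totally ordered by inclusion --- but exploit it differently: you use it to collapse the $d$-dimensional problem to a 1D CDF and invoke a classical black-box (DKW), while the paper uses it to extract an explicit martingale filtration and stays elementary modulo Doob. Your version is shorter if one is willing to cite DKW; the paper's is self-contained and dovetails with the Poissonization techniques used elsewhere in the manuscript. Both are valid proofs of the lemma.
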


\begin{proof}
In fact, we choose a sequence $(X_n)_{n\in\N}$  of i.i.d random variables of law $\rho\in \mathcal{P}(\R^d)$ and $L$ some Poisson random variable of parameter $N$ independent of  the $(X_n)_{n\in\mathbb{N}}$. We define the 
two point process by 
\begin{equation*}
M_N=\sum_{i=1}^L\delta_{Y_i},
\end{equation*}
$M_N$ is in fact a Poisson Random Measure (PRM) with  intensity measure $N\rho$.
Remark that $\left\|M_N-N\rho_N\right\|_{TV} = |L-N|$. For all $a\in \R^d$, we have
\begin{align}
% \sup_{u\in\R_+}
\left|\int \indiq_{|a-y|\leq u}(\rho_N-\rho)(dy)\right|&\leq \left|\int \indiq_{|a-y|\leq u}\biggl(  \frac1N M_N- \rho \biggr)(dy)\right| + \left|\int \indiq_{|a-y|\leq u} \biggl(  \frac1N M_N- \rho_N \biggr)(dy)\right| \nonumber\\
&\leq \frac{1}{N} \left|\int \indiq_{|a-y|\leq u} \bigl(M_N - N \rho \bigr) (dy)\right|
+\frac{1}{N}  \bigl\| M_N-N \rho_N\bigr\|_{TV}
\nonumber\\
\sup_{u\in\R_+}  \biggl|\int \indiq_{|a-y|\leq u}(\rho_N-  \rho)(dy)\biggr| &
\leq  \frac{1}{N}\sup_{u\in\R_+}\left|\MM^{N,a}_u\right|+\frac{|L-N| }{N},  \label{bound:mart}\\
& \hspace{30mm} \text{where} \quad 
\MM^{N,a}_u=\int \indiq_{|a-y|\leq u} \bigl(M_N - N \rho \bigr)(dy). \nonumber 
\end{align}
Since $M_N$ is a PRM, $(\MM^{N,a}_u)_{u\geq 0}$ is a martingale with respect to the filtration $(\FF^a_{u})_{u\geq 0}=  \bigl(\sigma\bigl( M_N \indiq_{B(a,u)} \bigr) \bigr)_{u\geq 0}$,
where $B(a,u)$ the ball of center $a$ and radius $u$ in $\R^d$. So using Doob's inequality and the fact that $
\MM^{N,a}_\infty = L-N$ we find
\begin{equation*}
\E\biggl[\sup_{u\geq 0}\bigl| \MM^{N,a}_u\bigr| \biggr] \leq 
\left( \E\biggl[\sup_{u\geq 0}\bigl| \MM^{N,a}_u\bigr|^2 \biggr]\right)^{1/2}\leq 2\left( \E\Bigl[\bigl| \MM^{N,a}_\infty \bigr|^2\Bigr]\right)^{1/2}\leq  
2 \, \V[L] =  2\sqrt {  N}.
\end{equation*}
Taking now the expectation in~\eqref{bound:mart}, using the above bound and  $\E |L-N| \leq \sqrt{ \V(L)}= \sqrt N$, we conclude the proof.
\end{proof}

We are now in position to prove Theorem~\ref{thm:MeaPC}.

\medskip
{\sl Step 1. Coupling estimates.}

We begin by a calculation valid for any fixed realization of these processes (\emph{i.e.\ }given any initial conditions and Brownian paths):
for all $i=1,\cdots,N$ we have:
\begin{align*}
\sup_{s\in [0,t]}\left | \Xis - \Yis \right | & \le 
\left | X^N_{i,0} - Y^N_{i,0}\right | +  \int_0^t \bigl|V_{i,s}^N-W_{i,s}^N \bigr|\, ds \\
\sup_{s\in [0,t]}\left | \Vis - \Wis \right |
&  \le 
\left | V^N_{i,0} - W^N_{i,0}\right | +
\int_0^t\left | \frac{1}{N}\sum_{j=1}^{N}K(X^N_{i,s}-X^N_{j,s})-\left ( \int_{\R \times \R } K(Y^N_{i,s}-x)\mu_t(dx,dv) \right ) \right |  \\ 
& \le 
\left | V^N_{i,0} - W^N_{i,0}\right | +
\int_0^t \biggl| \frac1N \sum_{j=1}^{N}K(X^N_{i,s}-X^N_{j,s})-K(Y^N_{i,s}-Y^N_{j,s})\biggr| \,ds + \frac t{N-1} + \int_0^t \Lambda^N_{i,s} \,ds 
\\  \text{with}  &\quad \Lambda_{i,s}^N :=  \biggl | \frac{1}{N-1}\sum_{j\neq i}^{N}K(Y^N_{i,s}-Y^N_{j,s})- \int_{\R \times \R } K(Y^N_{i,s}-x)\mu_s(dx,dv) \biggr|
\end{align*}
We sum these inequalities over $i=1,\cdots,N$, divide by $N \ge 2$ and then get:
\begin{equation} \label{eq:MKE}
\begin{split}
\frac{1}{N}\sum_{i=1}^N\sup_{s\in[0,t]}\left | \Xis-\Yis \right| & \leq  
\frac{1}{N}\sum_{i=1}^N \left | X^N_{i,0} - Y^N_{i,0}\right | + 
\int_0^t\frac{1}{N}\sum_{i=1}^N|V_{i,s}^N-W_{i,s}^N|ds\\
\frac{1}{N}\sum_{i=1}^N\sup_{s\in[0,t]}\left | \Vis-\Wis \right|
& \le  
\frac{1}{N}\sum_{i=1}^N  \left | V^N_{i,0} - W^N_{i,0}\right |  +  \frac {2t}N +\int_0^t  \Lambda_s^N \,ds \\
& \hspace{10mm}  + \frac1{N^2}\sum_{i \neq j}^N \int_0^t  \left| K(X^N_{i,s}-X^N_{j,s})-K(Y^N_{i,s}-Y^N_{j,s})\right| \, ds  \\
& \hspace{40mm} \text{where} \quad 
\Lambda_s^N := \frac1N \sum_{i=1}^N\Lambda_{i,s}^N \, ds.
\end{split}
\end{equation}
Using equality~\eqref{eq:rope} we find with the notation $ \widehat{\rho}_s^{i,N}=\frac{1}{N-1}\sum_{j\neq i}\delta_{\Yjs}$,
\begin{align*}
\frac1{N^2}\sum_{i \neq j}^N  & \bigl | K(X^N_{i,s}-X^N_{j,s})-K(Y^N_{i,s}-Y^N_{j,s})\bigr|  \\
&   \leq \frac{2}{N^2}\sum_{i\neq j}^N   \indiq_{\left|Y^N_{i,s}-Y^N_{j,s}\right|\leq 2|X^N_{i,s}-Y^N_{i,s}|} 
=\frac{2(N-1)}{N^2}\sum_{i=1}^N  \int_\R  \indiq_{\left|Y^N_{i,s}-y\right|\leq 2|X^N_{i,s}-Y^N_{i,s}|}
\widehat{\rho}_s^{i,N}(dy)\\
&\leq\frac2N \sum_{i=1}^N  \int_\R  \indiq_{|Y^N_{i,s}-y |\leq 2|X^N_{i,s}-Y^N_{i,s}|}\rho_s(dy) 
 +\frac2N \sum_{i=1}^N \left| \int_\R  \indiq_{|Y^N_{i,s}-y |\leq 2|X^N_{i,s}-Y^N_{i,s}|}(\widehat{\rho}_s^{i,N}-\rho_s)(dy)  \right| \\
& \le \frac8N \,  \| \rho_s \|_\infty  \sum_{i=1}^N |X^N_{i,s}-Y^N_{i,s}| +  \Gamma^N_s
\end{align*}
\hspace{1mm} \text{where} \quad 
\begin{equation} \label{def:Gamma}
% \left|\int_{\R}  \indiq_{\left|Y^N_{i,s}-y\right|\leq 2|X^N_{i,s}-Y^N_{i,s}|}(\widehat{\rho}_s^{i,N}-\rho_s)(dy)|\right|\leq
\Gamma^N_s := \frac2N \sum_{i=1}^N \Gamma_{i,s}^N, \qquad
\Gamma_{i,s}^N:= 
\sup_{u\in \R_+}\left | \int_{\R}  \indiq_{\left|Y^N_{i,s}-y\right|\leq u}(\widehat{\rho}_s^{i,N}-\rho_s)(dy)\right |.
\end{equation}
Then, $\beta(t) := \frac1N \sum_{i=1}^N \bigl( \sup_{s\in[0,t]} | \Xis-\Yis |+  \sup_{s\in [0,t]} | \Vis - \Wis  | \bigr)$ satisfies the integral inequality
\[
\beta(t) \le \beta(0) +  \int_0^t \biggl( \bigl( 1 + 8 \| \rho_s \|_\infty \bigr) \beta(s) + 
\Lambda_s^N+2 \, \Gamma_s^N+\frac2N  \biggr)\,ds.
\]
An application of Gr\"onwall's Lemma leads to
\begin{equation*}
\beta(t)
 \leq e^{\bigl(t+8\int_0^t\left\| \rho_s \right\|_{\infty} \,ds\bigr)} \,\biggl( \beta(0) + \int_0^t  e^{-s} \Bigl(\Lambda_s^N+2 \, \Gamma_s^N+\frac2N\Bigr)\, ds \biggr).
\end{equation*}
Taking the expectation and using the symmetry of the laws of the of $(\Xit,\Vit)_{i=1,\ldots,N}$ and $(\Yit,\Wit)_{i=1,\ldots,N}$ we find
\begin{multline} \label{eq:Coup}
\E\Bigl[ \sup_{s\in[0,t]} \left | X_{1,s}^N-Y_{1,s}^N \right|+\left | V_{1,s}^N-W_{1,s}^N \right| \Bigr] \, \leq e^{\bigl(t+8\int_0^t\left\| \rho_s \right\|_{\infty} \,ds\bigr)} \,  \\
\biggl( E\Bigl[  \left | X_{1,0}^N-Y_{1,0}^N \right|+\left | V_{1,0}^N-W_{1,0}^N \right| \Bigr] 
 +   \int_0^t e^{-s}
\Bigl(\E \bigl[\Lambda_{1,s}^N+2 \, \Gamma_{1,s}^N \bigr]+\frac2N\Bigr)\, ds \biggr).
\end{multline}
We will  bound the expectation of stochastic terms appearing in the r.h.s. with the help of Lemma~\ref{lem:(De)-Poiss}.

\medskip
{\sl Step 2. Conclusion of the proof.}

We recall that $ \widehat{\rho}_t^{i,N}=\frac{1}{N-1}\sum_{j\neq i}\delta_{\Yjt}$ is the empirical measure associated to the $(\Yit)_{ 2 \le i \le N}$, and is then  independent of $Y^N_{1,t}$.  By the definition~\eqref{def:Gamma} of $\Gamma$ and Lemma~\ref{lem:(De)-Poiss}, we have
\begin{align*}
\E\left[\Gamma^N_{1,t}\right]
& =\E\left[\sup_{u\geq 0}\left|\int \indiq_{|Y_{1,t}^N-y|\leq u}\bigl(\widehat{\rho}^{1,N}_t-\rho_t\bigr)(dy)\right|\right], \\
& =
 \E \Biggl[ \E\left[\sup_{u\geq 0}\left|\int \indiq_{|Y_{1,t}^N-y|\leq u}\bigl(\widehat{\rho}^{1,N}_t-\rho_t\bigr)(dy)\right|  \bigg| \, Y^N_{1,t}\right] \Biggr] \leq \frac{3}{\sqrt N},
\end{align*}

Moreover, using again the fact that the $(\Yit)_{1\le i \le N}$ are i.i.d and that $\| K\|_\infty \le 1/2$, we find for $N \ge 2$\begin{align*}
\E\bigl[\Lambda_{1,t}^N\bigr]
&=  \E\Biggl[\E\biggl[ \biggl | \frac{1}{N-1}\sum_{j=2}^{N}K( Y_{1,t}^N-Y^N_{j,t})- \int_{ \R^2 } K( Y_{1,t}^N-x)\mu_s(dx,dv) \biggr|  \; \bigg| \, Y^N_{1,t} \biggr] \Biggr], \\
& \le \E\Biggl[  \biggl( \V\biggl[   \frac{1}{N-1}\sum_{j=2}^{N}K( Y_{1,t}^N-Y^N_{j,t})    \; \bigg| \, Y^N_{1,t} \biggr]  \biggr)^{1/2} \Biggr] 
 \le   \frac1{2\sqrt{N-1}} \le  \frac1{\sqrt {2N}}.
\end{align*}
Then applying these results  to equation~\eqref{eq:Coup} leads  to 
\[
\E\Bigl[ \sup_{s\in[0,t]} \left | X_{1,s}^N-Y_{1,s}^N \right|+\left | V_{1,s}^N-W_{1,s}^N \right| \Bigr] \, \leq e^{\bigl(t+8\int_0^t\left\| \rho_s \right\|_{\infty} \,ds\bigr)} 
\biggl( \frac1{\sqrt N} ( 1 + 2 \times 3)   +\frac2N  \biggr),
\]
which leads to the claimed bound.

%%%%%%%%%%%%%%%%%%%%%%%%%%%%%%%%%%%%%%%%%%%%%%%%%%%%%%%%%%%%%%
%
%
%
%
%
%
%%%%%%%%%%%%%%%%%%%%%%%%%%%%%%%%%%%%%%%%%%%%%%%%%%%%%%%%%%%%%%%

\section{Proof of Proposition ~\ref{prop:DevBound}} \label{sec:DevBound}
We distinguish here the deviation upper bounds for $\sup_{t\in[0,T]}\left\|\mu^N_{Y,t}\right\|_{\infty,\ep}$ because such a result have an interest by itself. Moreover, we will see it is used in the proof of the concentration inequalities. In the present proof,  we apply the following strategy: we first find prove concentration inequalities for the quantity $\bigl\|\mu^N_{Y,t}\bigr\|_{\infty,\ep}$ at a fixed time $t$, then for the variation of this quantity on small time intervals, and finally conclude by mixing both estimates in an optimal way.
That proof could be extended to dimension larger than $1$, but we prefer to restrict here to the case of interest.

\subsection{A uniform deviation upper bound for Binomial variables}
We begin with a general result about deviation upper  bounds for Binomial variable.		
\begin{lem}\label{lem:DevBern}
Let $p \in [0,1]$. If $X$ is binomial variable of parameter $(N,p)$, then  for any $\alpha>0$:
\begin{equation*}
\Prob\left( |X-Np| \geq N\alpha \right)\leq 2e^{-2\alpha^2 N}.
\end{equation*}
\end{lem}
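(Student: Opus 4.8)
The plan is to prove this as a special case of Hoeffding's inequality, via the classical Chernoff bounding method combined with Hoeffding's lemma. First I would write $X = \sum_{i=1}^N \xi_i$ where the $(\xi_i)_{i \le N}$ are i.i.d.\ Bernoulli random variables of parameter $p$, so that the centered variables $Y_i := \xi_i - p$ satisfy $\E[Y_i] = 0$ and take values in the interval $[-p,\, 1-p]$, which has length $1$.

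The core step is the exponential Markov inequality: for any $\lambda > 0$,
\[
\Prob\bigl(X - Np \ge N\alpha\bigr) \le e^{-\lambda N \alpha}\, \E\bigl[ e^{\lambda(X - Np)} \bigr] = e^{-\lambda N\alpha}\, \bigl( \E[e^{\lambda Y_1}] \bigr)^N,
\]
where the last equality uses the independence and identical distribution of the $Y_i$. I would then invoke Hoeffding's lemma, which asserts that for a centered random variable supported in an interval of length $\ell$ one has $\E[e^{\lambda Y_1}] \le e^{\lambda^2 \ell^2/8}$, hence here $\E[e^{\lambda Y_1}] \le e^{\lambda^2/8}$. If one prefers a self-contained argument, this lemma follows from the convexity bound $e^{\lambda y} \le \frac{b-y}{b-a}e^{\lambda a} + \frac{y-a}{b-a}e^{\lambda b}$ for $y \in [a,b] = [-p, 1-p]$, taking expectations, and then controlling the logarithm of the resulting function of $\lambda$ by a second-order Taylor estimate (its second derivative is bounded by $(b-a)^2/4 = 1/4$). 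Combining, we obtain $\Prob(X - Np \ge N\alpha) \le e^{N(\lambda^2/8 - \lambda\alpha)}$, and choosing the optimal $\lambda = 4\alpha$ gives exactly $\Prob(X - Np \ge N\alpha) \le e^{-2\alpha^2 N}$.

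Finally, applying the identical argument to the variables $-Y_i = p - \xi_i$ (equivalently, to the binomial variable $N - X$, which has parameter $(N, 1-p)$) yields $\Prob(X - Np \le -N\alpha) \le e^{-2\alpha^2 N}$, and a union bound over the two one-sided deviation events produces the factor $2$ in the statement. The only ingredient that is not a direct computation is Hoeffding's lemma; I expect the brief convexity-plus-Taylor verification of that lemma to be the main (and only mildly technical) point.
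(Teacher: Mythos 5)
Your proof is correct, but it follows a different route from the paper's. You apply Hoeffding's lemma to bound the moment generating function by the sub-Gaussian estimate $\E[e^{\lambda Y_1}]\le e^{\lambda^2/8}$ \emph{first}, and then optimize over $\lambda$ at the end. The paper instead optimizes $\lambda$ first, which yields the tight Chernoff bound $\Prob(X\ge N(p+\alpha))\le e^{-N g_p(\alpha)}$ with $g_p(\alpha)=H(\mathcal B(p+\alpha)\,|\,\mathcal B(p))$ the Bernoulli relative entropy, and then bounds this entropy below by $2\alpha^2$ via a second-order Taylor estimate using $g_p''\ge 4$. The two arguments are essentially dual: the paper's lower bound $g_p''\ge 4$ on the convex conjugate mirrors the upper bound $1/4$ on the second derivative of the cumulant generating function used inside Hoeffding's lemma, and they produce the same final constant. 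What the paper's approach buys is a sharper intermediate statement (the relative-entropy bound is strictly better than $2\alpha^2$ for $p\ne 1/2$), which is discarded in the last step; what your approach buys is that it is the standard textbook Hoeffding argument, arguably more transparent and more immediately recognizable as such. Either is a complete and valid proof of the stated inequality.
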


\begin{proof}
We may write $X = \sum_{i=1}^N  X_i$, with a family $(X_i)_{i \le N}$  of  $N$ i.i.d. Bernoulli variables of parameter $p$. For all $\lambda>0$ we have:
\begin{equation*}
\E\bigl[  e^{\lambda X} \bigr] =\prod_{i=1}^N\E\bigl [ e^{\lambda X_i} \bigr ]
 =\left (  \E\bigl[  e^{\lambda X_1} \bigr] \right )^N
 = \left ( 1-p +pe^{\lambda} \right )^N.
\end{equation*}
Using Markov's inequality we get:
\begin{equation*}
\Prob\left ( X \geq N(p+\alpha) \right ) \leq \left ( \frac{1-p +p e^\lambda}{e^{\lambda (p+\alpha)}} \right )^N
 =\left((1-p)e^{-\lambda (p+\alpha)} +p e^{\lambda (1-(p+\alpha))}\right)^N,
\end{equation*}
The optimal $\lambda$ turns out to be
$ln\left(\frac{(p+\alpha)(1-p)}{(1-(p+\alpha))p}\right)$, and we get after some calculations:
\[\Prob\left ( X \geq N(p+\alpha) \right ) \leq e^{- N g_p(\alpha)},
\]
where
\[
 g_p(\alpha) := H\left ( \mathcal{B}(p+\alpha) \big|\mathcal{B}(p) \right )=(1-(p+\alpha))ln\left ( \frac{1-(p+\alpha)}{1-p}\right )+(p+\alpha)ln\left ( \frac{p+\alpha}p  \right ),
\]
is the relative entropy with respect to $\mathcal{B}(p)$. 
By the properties of relative entropy, we have $g_p(0)=0$ and $g_p(\alpha)>0$ if $\alpha \neq 0$. So $g'_p(0)=0$. Moreover a straight calculation gives for all $\alpha$ such that $p+\alpha < 1$:
\begin{equation*}
g''_p(\alpha)=\frac{1}{p+\alpha}+ \frac{1}{1-(p+\alpha)} \ge 4,
\end{equation*}
since for all $x \in (0,1)$, $x(1-x)\leq\frac{1}{4}$. Using Taylor's formula with integral rest at order 2 we have:
\begin{equation*}
g_p(\alpha) = g_p(0)+g'_p(0)\alpha+\int_{0}^\alpha g_p''(u)(\alpha-u)du  \geq 2\alpha^2.
\end{equation*}
So combining all these estimates we get for $\alpha + p <1$:
\begin{equation*}
\Prob\left ( X \geq N(p+\alpha) \right )   \leq e^{- Ng_p(\alpha)} \leq e^{-2N\alpha^2}.
\end{equation*}
It is still valid for $\alpha+ p=1$: just pass to the limit, and there is nothing to prove for $p+\alpha>1$, since $X$ cannot be larger than $N$.
Moreover, since
$\Prob\left ( X \leq N(p-\alpha) \right ) =\Prob\left ( N-X \geq N\left ( (1-p)+\alpha \right ) \right )$, an 
application of  the above bound to the $ \mathcal B(N,1-p)$-Binomial variable  $N-X$ leads to
\begin{equation*}
\Prob\left ( X \leq N(p-\alpha) \right )\leq e^{-2N\alpha^2},
\end{equation*}
and this concludes the proof.
\end{proof}

%%%%%%%%%%%%%%%%%%%%%%%%%%
\subsection{Concentration inequalities at fixed time}		

Thanks to lemma~\ref{lem:DevBern} we are able to give some concentration inequalities for the empirical measure $\rho^N=\frac{1}{N}\sum_{i=1}^N\delta_{Y_i}$, for  i.i.d.r.v. $(Y_i)_{i _le N}$. 
%Indeed the $\$ being i.i.d for all $B\in\mathcal{B}(\R)$, $\rho^N(B)$ is a Bernoulli variable. 

\begin{lem} \label{lem:DevFixt}
Let $\alpha,\ep>0$. Assume that $(Y_1,\ldots,Y_N)$ are $N$ independent random variables, all with law $\rho \in L^\infty$ (we identify the law and its density). Assume also that $\rho$ has an exponential moment of order $\lambda>0$: $\MM_\lambda(\rho):= \int e^{ \lambda |y|} \rho(dy)  < + \infty$. We denote by $\rho^N := \frac1N \sum_{i} \delta_{Y_i}$ the associated empirical measure. Then for any $\ep, \alpha >0$,
\begin{equation*}
\Prob \bigl (  \| \rho^N  \|_\infep \geq  \| \rho \|_\infty +\alpha \bigr ) 
 \le 
 \biggl(  \frac{ 4  \| \rho\|_\infty N (\ep \alpha)}{\lambda}  + 2
 +  N \MM_\lambda(\rho) \biggr)\,  e^{-2N(\ep \alpha)^2}
\end{equation*}
\end{lem}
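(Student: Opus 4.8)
The plan is to combine a pointwise (fixed $x$) binomial tail estimate with a truncation-and-discretization argument to make it uniform in $x$. I would begin with a harmless reduction: since $\rho^N([x-\ep,x+\ep])\le 1$ for every $x$ one has $\|\rho^N\|_\infep\le\frac1{2\ep}$ surely, and $\MM_\lambda(\rho)=\int e^{\lambda|y|}\rho(dy)\ge 1$; hence whenever the claimed right-hand side is $\ge 1$ there is nothing to prove, and we may freely assume $\ep(\|\rho\|_\infty+\alpha)\le\frac12$ and (using $\MM_\lambda(\rho)\ge 1$) that $N(\ep\alpha)^2$ is bounded below, which buys the room needed when tidying up constants.

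For the pointwise step, for any fixed $x$ the integer $N\rho^N([x-\ep,x+\ep])=\#\{i:|Y_i-x|\le\ep\}$ is $\mathcal B(N,p_x)$-distributed with $p_x=\rho([x-\ep,x+\ep])\le 2\ep\|\rho\|_\infty$, so the one-sided form of Lemma~\ref{lem:DevBern} gives, for any margin $\beta>0$, $\Prob\bigl(\rho^N([x-\ep,x+\ep])\ge 2\ep(\|\rho\|_\infty+\beta)\bigr)\le e^{-2N\beta^2}$; the same holds with $[x-\ep,x+\ep]$ replaced by any interval $J$ of length $\le 2\ep+\delta$, upon replacing $2\ep\|\rho\|_\infty$ by $\rho(J)\le(2\ep+\delta)\|\rho\|_\infty$.

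To upgrade this to a bound uniform in $x$, I would split $\R$. First, set $R:=\frac{2N(\ep\alpha)^2}{\lambda}$; by Markov applied to $e^{\lambda|Y_1|}$ together with a union bound, $\Prob(\max_i|Y_i|>R)\le N e^{-\lambda R}\MM_\lambda(\rho)=N\MM_\lambda(\rho)\,e^{-2N(\ep\alpha)^2}$, which is the last term of the estimate, and on the complementary event $\rho^N$ is supported in $[-R,R]$, so the supremum defining $\|\rho^N\|_\infep$ need only be taken over $x\in[-R-\ep,R+\ep]$. Second, put a grid $\{x_k\}$ on that window with spacing $\delta:=\frac{\ep\alpha}{\|\rho\|_\infty}$; for $x$ between two consecutive grid points $[x-\ep,x+\ep]\subseteq J_k:=[x_k-\ep,x_{k+1}+\ep]$, which has $\rho(J_k)\le 2\ep\|\rho\|_\infty+\ep\alpha$, so that $\{\|\rho^N\|_\infep\ge\|\rho\|_\infty+\alpha\}$ forces, for some $k$, $N\rho^N(J_k)\ge 2N\ep(\|\rho\|_\infty+\alpha)\ge N\rho(J_k)+N\ep\alpha$, an event of probability $\le e^{-2N(\ep\alpha)^2}$ by the pointwise step with $\beta=\ep\alpha$. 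Since the number of grid points in $[-R-\ep,R+\ep]$ is $\le\frac{2(R+\ep)}{\delta}+O(1)=\frac{4N\ep\alpha\|\rho\|_\infty}{\lambda}+O(1)$, a union bound over the truncation event and the grid yields the claim, the $O(1)$ and lower-order pieces being absorbed into the additive constant $2$ (legitimate once the degenerate case above has been removed).

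The heart of the matter — and the step most prone to error — is this uniform-in-$x$ part: one has to pick the truncation radius $R$ and the mesh $\delta$ simultaneously so that (i) the tail of $\max_i|Y_i|$ carries exactly the factor $e^{-2N(\ep\alpha)^2}$, which forces $\lambda R\asymp 2N(\ep\alpha)^2$; (ii) the mesh is fine enough that enlarging $[x-\ep,x+\ep]$ to $J_k$ costs at most the margin $\alpha$ rather than $2\alpha$, keeping the per-point exponent at $2N(\ep\alpha)^2$, which forces $\delta\asymp\ep\alpha/\|\rho\|_\infty$; and (iii) the grid then has $\asymp N\ep\alpha\|\rho\|_\infty/\lambda$ points. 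Everything else is routine bookkeeping of the lower-order terms, controlled by the initial reduction to the non-degenerate regime.
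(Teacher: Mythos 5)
Your proposal follows essentially the same three-step route as the paper's proof: (a) a one-sided binomial deviation bound at a fixed center, (b) discretization of the center on a compact window with mesh $\asymp\ep\alpha/\|\rho\|_\infty$, and (c) truncation of the window at radius $R=2N(\ep\alpha)^2/\lambda$ chosen so the tail of $\max_i|Y_i|$ carries the same exponential factor. The choice of $R$ and the mesh, and hence the shape of the final bound, coincide with the paper's.

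Two small points are worth flagging. First, the displayed pointwise bound $\Prob\bigl(\rho^N([x-\ep,x+\ep])\ge 2\ep(\|\rho\|_\infty+\beta)\bigr)\le e^{-2N\beta^2}$ is not what Lemma~\ref{lem:DevBern} gives: the margin at the level of the binomial count is $2\ep\beta$, so the exponent should read $-8N(\ep\beta)^2$. Your later application side-steps this because you compute the margin directly as $N\ep\alpha$ and apply the lemma afresh, so the final step is correct, but the stated intermediate bound is wrong as written. Second, your grid count over $[-R-\ep,R+\ep]$ produces an extra $\sim 2\ep/\delta=2\|\rho\|_\infty/\alpha$ term that is not $O(1)$ and cannot be absorbed into the additive constant $2$, even after the degenerate-case reduction; the paper avoids this by observing (via a monotonicity argument: any interval $[x-\ep,x+\ep]$ with $x>R$ intersected with $[-R,R]$ is contained in $[R-2\ep,R]$) that the supremum over all of $\R$ can already be replaced by the supremum over $x\in[-R,R]$, which gives exactly the clean count $\le 4N\ep\alpha\|\rho\|_\infty/\lambda+2$. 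With that adjustment your argument reproduces the paper's bound exactly.
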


\begin{proof}

{\sl Step 1. A first bound valid on compact subset.}
For any $0<\delta<\ep$ we set $k = \left\lfloor  \frac R {2\delta}\right\rfloor +1$. It is clear that for all $x \in [-R,R]$, there exists $\ell \in \left \{ -k,\cdots, k \right \}$ such that $ B(x,\ep)\subset B(2 \ell \delta,\ep + \delta)$. It implies
%since $\rho^N\bigl( B(x,r) \bigr)=\sum_{i=1}^N \delta_{Y_i}(B(x,r))$
\begin{align*}
\Prob \biggl (\sup_{x\in[-R,R]} \frac{\rho^N [ B(x,\ep) ]}{2\ep}  \geq  \| \rho \|_{\infty} & +\alpha \biggr)
\leq 
\Prob\left (\sup_{\ell=-k,\cdots,k}  \rho^N \bigl[ B(2 \ell \delta ,\ep + \delta) \bigr] \geq  2\ep \bigl( \| \rho  \|_\infty +\alpha \bigr) \right )\\
&\leq 
\sum_{\ell=-k}^k \Prob\Bigl(   \rho^N \left [ B(2  \ell \delta ,\ep + \delta) \right ] \geq  2 (\ep + \delta) \| \rho \|_\infty + 2 \bigl(\alpha  \ep - \delta \| \rho \|_\infty  \bigr) \Bigr).
\end{align*}
Since for any $\ell$, $N\rho^N\bigl(B(2\ell \delta,\ep + \delta)\bigr)$ is a Binomial variable of parameter $N$  and $p_\ell=\int_{B(2  \ell \delta ,\ep + \delta)}\rho_t(dx) \le 2 (\ep + \delta) \| \rho_t \|_\infty$, we may apply Lemma~\ref{lem:DevBern} and bound each term in the r.h.s. by $\exp\bigl(- 8 N (\alpha \ep - \delta \|  \rho \|_\infty)^2 \bigr)$. By the definition of $k$, and provided that $\ep \alpha  \ge \delta \| \rho \|_\infty$ it leads to
\[
\Prob\left (\sup_{x\in[-R,R]}  \frac{\rho^N \left [ B(x,\ep) \right ]}{2\ep} \geq \left \| \rho  \right \|_{\infty}+\alpha \right )
\leq  
\Bigl(\frac R \delta + 2 \Bigr) e^{- 8 N (\alpha \ep - \delta \|  \rho \|_\infty)^2}.
\]
We now have to choose $\delta$ in order to minimize the right hand side. The particular choice $\delta \| \rho\|_\infty = (\ep \alpha)/2$ satisfies the previous restriction and already provides an interesting bound: 
\begin{equation} \label{bound:locinfdis}
\Prob\left (\sup_{x\in[-R,R]}  \frac{\rho^N \left [ B(x,\ep) \right ]}{2\ep} \geq \left \| \rho  \right \|_{\infty}+\alpha \right )
\leq  
\biggl(   \frac{ 2 R  \| \rho\|_\infty}{\ep \alpha}  + 2 \biggr)\,  e^{-2N(\ep \alpha)^2}.
\end{equation}

\medskip
{\sl Step 2. Extension to the whole space.}
It is clear that
\begin{align*}
 \Prob \bigl (  \| \rho^N  \|_\infep \geq  \| \rho \|_\infty +\alpha \bigr ) 
 & \leq  
 \Prob\left ( \sup_{x\in [-R,R]}\frac{\rho^N \left [ B(x,\ep) \right ]}{2\ep} \geq \left \| \rho_t \right \|_{\infty}+\alpha \right )
 + \Prob \bigl ( \exists \, i \le N,  |Y_i| > R  \bigr ) \\
 & \le 
 \biggl(   \frac{ 2 R  \| \rho\|_\infty}{\ep \alpha}  + 2 \biggr)\,  e^{-2N(\ep \alpha)^2}
 +  N \MM_\lambda(\rho) e^{- \lambda R},
\end{align*}
where we used~\eqref{bound:locinfdis} to bound the first term in the r.h.s, and a simple application of Chebyshev's inequality to bound the second term: precisely that 
$ \Prob \left ( \exists \, i \le N,  |Y_i| > R  \right ) 
\le N \Prob \left (  |Y_1| > R  \right )  \le N \MM_\lambda(\rho) e^{- \lambda R}$. 
We choose $R=\frac2 \lambda N(\alpha\ep)^2$, and finally get:
\begin{equation*}
 \Prob \bigl (  \| \rho^N  \|_\infep \geq  \| \rho \|_\infty +\alpha \bigr ) 
 \le 
 \biggl(  \frac{ 4  \| \rho\|_\infty N (\ep \alpha)}{\lambda}  + 2 \biggr)\,  e^{-2N(\ep \alpha)^2}
 +  N \MM_\lambda(\rho) e^{-2N(\ep \alpha)^2},
\end{equation*}
and the result is proved.
\end{proof}

%%%%%%%%%%%%%%%%%%%%%%%%%%%%%%%%%%%%%%%%%%%%%%%%
\subsection{Conclusion of the proof of Proposition~\ref{prop:DevBound}}

We begin with the following corollary of Proposition~\ref{prop:MomDev}, which will be useful in the sequel
\begin{cor} \label{cor:dev_Dt}
Let be $(Y_t,W_t)_{t\in [0,T]}$ be a solution of \ref{eq:NLSDE} for some initial condition $(Y_0,W_0)$ of law $f_0$, having an exponential moment of order $\lambda>0$. We  define $c_\lambda := \frac52  + \frac1 \lambda  \ln \Bigl( 
\E \bigl[ e^{\lambda |W_0|} \bigr] \Bigr)$ and denote by $f_t$ the law of $(Y_t,W_t)$. 
For $0 \le s < t \le s + \min \Bigl(\frac1{16}, \lambda^{-2} \Bigr)$, and $\beta>0$, it holds:
\[
 \Prob \biggl( \,
\sup_{s \le u \le t} |Y_u - Y_s| \ge (t-s) (c_\lambda + \beta)
 \biggr)
 \le   e^{- \frac \beta 2 \min \bigl( \beta, \lambda \bigr)}.
\]

And if the $(Y_i^N,W_i^N)$ are $N$ independent copies of the previous process, with the same notation
\[
 \Prob \Biggl( \,\frac1N \sum_{i=1}^N
\sup_{s \le u \le t} |Y_{i,u}^N - Y_{i,s}^N| \ge (t-s) (c_\lambda + \beta)
 \Biggr)
 \le   e^{- \frac \beta 2 \min \bigl( \beta, \lambda \bigr) N}.
\]

\end{cor}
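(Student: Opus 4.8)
The plan is to read this off as a one‑line Chernoff bound, the input being the exponential moment estimate of Proposition~\ref{prop:MomDev}(iii). The one subtlety is that one must feed that estimate not only at the exponent $\lambda$, but at every admissible exponent $\theta \in (0,\lambda]$: this is exactly what makes the Chernoff optimisation produce the $\min(\beta,\lambda)$ appearing in the statement.

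First I would note that Proposition~\ref{prop:MomDev}(iii) applies verbatim with any $\theta \in (0,\lambda]$ in place of $\lambda$. Indeed, a law with a finite exponential moment of order $\lambda$ has one of every order $\theta \le \lambda$ (by Jensen, $\E[e^{\theta(|Y_0|+|W_0|)}] \le \E[e^{\lambda(|Y_0|+|W_0|)}]^{\theta/\lambda} < \infty$), and the time restriction $t-s \le \min(\tfrac1{16},\lambda^{-2})$ forces $t-s \le \min(\tfrac1{16},\theta^{-2})$ since $\theta^{-2}\ge\lambda^{-2}$. Setting $Z := (t-s)^{-1}\sup_{s\le u\le t}|Y_u-Y_s|$, this gives, for every $0<\theta\le\lambda$,
\[
\E\bigl[e^{\theta Z}\bigr] \le e^{\frac\theta2(5+\theta)}\,\E\bigl[e^{\theta|W_0|}\bigr]
\le e^{\frac{5\theta}2+\frac{\theta^2}2}\,\E\bigl[e^{\lambda|W_0|}\bigr]^{\theta/\lambda}
= e^{\frac{5\theta}2+\frac{\theta^2}2}\,e^{\theta(c_\lambda-\frac52)} = e^{\theta c_\lambda+\frac{\theta^2}2},
\]
where I used Jensen again for $\E[e^{\theta|W_0|}]$ and then the very definition $c_\lambda=\frac52+\frac1\lambda\ln\E[e^{\lambda|W_0|}]$, which makes the $\frac{5\theta}2$ terms cancel.

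Then I would apply Markov's inequality: for $0<\theta\le\lambda$,
\[
\Prob\bigl(Z\ge c_\lambda+\beta\bigr) \le e^{-\theta(c_\lambda+\beta)}\,\E\bigl[e^{\theta Z}\bigr] \le e^{-\theta\beta+\frac{\theta^2}2}.
\]
The exponent $\tfrac{\theta^2}2-\theta\beta$ is minimised over $(0,\lambda]$ at $\theta=\min(\beta,\lambda)$: when $\beta\le\lambda$ the minimiser is $\theta=\beta$ with value $-\tfrac{\beta^2}2$, and when $\beta>\lambda$ the function is decreasing on $(0,\lambda]$ so the minimiser is $\theta=\lambda$ with value $-\lambda(\beta-\tfrac\lambda2)\le-\tfrac{\lambda\beta}2$ (using $\beta-\tfrac\lambda2\ge\tfrac\beta2$ when $\beta\ge\lambda$). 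In both cases the value is at most $-\tfrac\beta2\min(\beta,\lambda)$, which is the first claimed inequality after recalling what $Z$ is. For the $N$-sample version I would run the identical argument: the variables $Z_i := (t-s)^{-1}\sup_{s\le u\le t}|Y^N_{i,u}-Y^N_{i,s}|$ are i.i.d., so $\E[e^{\theta\sum_i Z_i}]=\E[e^{\theta Z_1}]^N\le e^{N(\theta c_\lambda+\theta^2/2)}$, and Markov's inequality applied to $\{\sum_i Z_i\ge N(c_\lambda+\beta)\}$ followed by the same optimisation in $\theta$ gives the factor $N$ in the exponent.

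There is no genuine obstacle in this corollary; the only thing to be careful about is precisely the point highlighted above — using Proposition~\ref{prop:MomDev}(iii) only at the exponent $\lambda$ and then interpolating with Jensen would leave a spurious $\tfrac{\theta\lambda}2$ in the bound instead of $\tfrac{\theta^2}2$, and would fail to reproduce the Gaussian‑type rate $e^{-\beta^2/2}$ for small $\beta$. One should also keep the standing constraint $s<t\le s+\min(\tfrac1{16},\lambda^{-2})$, inherited from Proposition~\ref{prop:MomDev}, visibly in force throughout.
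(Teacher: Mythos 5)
Your proof is correct and is essentially the paper's own argument: apply Markov/Chernoff at a variable exponent $\theta=\lambda'\in(0,\lambda]$ to the quantity controlled by Proposition~\ref{prop:MomDev}(iii), use Jensen's inequality to replace $\E[e^{\theta|W_0|}]$ by $\E[e^{\lambda|W_0|}]^{\theta/\lambda}$ so that the $c_\lambda$ terms cancel, and then optimise the resulting exponent $\tfrac{\theta^2}{2}-\theta\beta$ by taking $\theta=\min(\beta,\lambda)$. The $N$-sample extension via independence is likewise the paper's.
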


\begin{proof}
Using point $(iii)$ of the Proposition~\ref{prop:MomDev}, and Chebyshev's inequality, and the definition of $c_\lambda$ we get for any $0<\lambda' \le \lambda$:
\begin{align*}
\Prob \Bigl( 
\sup_{s \le u \le t} |Y_u - Y_s| \ge (t-s) (c_\lambda + \beta)
 \Bigr) 
 & \le
e^{- \lambda' (c_\lambda + \beta) } 
\E \Bigl[ e^{\lambda' (t-s)^{-1} \sup_{s \le u \le t} |Y_u - Y_s| }  \Bigr] \\
& \le
\, e^{- \lambda' (c_\lambda + \beta) }  
e^{\frac12 \lambda'(5 + \lambda')  }\,  \E \Bigl[ e^{\lambda' |W_0|} \Bigr] \\
& \le  \,  e^{- \lambda' \beta + \frac 12 (\lambda')^2 } 
\E \Bigl[ e^{\lambda  |W_0|} \Bigr]^{-\frac {\lambda'} \lambda}
\E \Bigl[ e^{\lambda' |W_0|} \Bigr] 
 \le   e^{- \lambda' \beta + \frac12 (\lambda')^2 }.
\end{align*}
Optimization in $\lambda'$ leads to the particular choice $\lambda' = \beta $, when $\beta \le \lambda$, and to the choice $\lambda'= \lambda$ otherwise. If  we use $\beta - \frac \lambda 2 \ge \frac \beta 2$ in the later case, we obtain the expected bound.

The proof of the second bound involving $N$ independent copies follows the same lines: by independence 
\[
 \Prob \biggl( \,\frac1N \sum_{i=1}^N
\sup_{s \le u \le t} |Y_{i,u}^N - Y_{i,s}^N| \ge (t-s) (c_\lambda + \beta)
 \biggr) \le \biggl(  e^{- \lambda' (c_\lambda + \beta) } 
\E \Bigl[ e^{\lambda' (t-s)^{-1} \sup_{s \le u \le t} |Y_u - Y_s| }  \Bigr] \biggr)^N,
 \]
and the conclusion follows with the same optimization on $\lambda'$.
\end{proof}

We are now in position to prove Proposition~\ref{prop:DevBound}. 
We fix $\gamma >0$, define $\alpha := \frac \gamma 2$, and recall from Corollary~\ref{cor:dev_Dt} the notation $c_\lambda= \frac52 + \frac1 \lambda \ln \Bigl( \E \bigl[ e^{\lambda |W_0|} \bigr] \Bigr)$ together with $\kappa_t := \sup_{0 \le s \le t} \| \rho_s\|_\infty$.
\[
\beta:= \sqrt N \ep \gamma \, 
\max \Bigl( 1, \frac{\sqrt N \ep \gamma} \lambda \Bigr),
 \qquad
\Delta t := \frac {\ep \alpha} {(\kappa_t + \alpha)(c_\lambda + \beta)}.
\] 
Remark that $\beta$ satisfies $\frac \beta 2 \min \bigl(\beta , \lambda \bigr) = 2 N (\ep \alpha)^2$
and that  $\Delta t$  is always smaller than $\min \bigl(\frac1 {16}, \lambda^{-2} \bigr)$ by the assumptions in Proposition~\ref{prop:DevBound}, so that we may  apply Corollary~\ref{cor:dev_Dt}. 
We then choose  $K = \bigl\lfloor \frac t{\Delta t} \bigr\rfloor +1$, define $t_k= k \Delta t$ for all $0  \le k \le K$ (remark that $t_K \ge t$), and   
 the two following events $\Omega^1$ and $\Omega^2$ as:
\begin{align*}
\Omega^1 & =\left \{ \exists \;0 \le k \le K-1, \;  \ \left \| \rho^N_{t_k}  \right \|_{\infty,\ep+ \Delta t(c_\lambda+\beta)} > \kappa_t + \alpha \right \}, \\
\Omega^2 & =\left \{ \exists \; 0 \; \le k \le K-1,\; \exists \; i \le N, \ \sup_{s \in [t_k,t_{k+1}]} \bigl|\Yis -Y^N_{i,t_k} \bigr| >\Delta t( c_\lambda+\beta)  \right \}.
\end{align*}

If the events $\Omega_1^c$ and $\Omega_2^c$ are realized, then for any $0 \le s \le t$, we choose $k$ such that $s \in [t_k, t_{k+1})$ and get for any $x \in \R$
\begin{align*}
\rho^N_s\bigl( B(x,\ep) \bigr) & \le \rho^N_{t_k}\bigl( B(x,\ep+ \Delta t (c_\lambda + \beta) \bigr) 
 \le 
2  (\kappa_t + \alpha) \bigl( \ep+ \Delta t (c_\lambda + \beta) \bigr)\\
& = 2  \ep  (\kappa_t + \alpha) \biggl( 1 + \frac \alpha{(\kappa_t + \alpha)} \biggr) = 2 \ep (\kappa_t + \gamma).
\end{align*}
The last equalities follow from the definition of $\alpha$ and $\Delta_t$. 
It means that if $\Omega_1^c$ and $\Omega_2^c$ are realized, then 
\[
\sup_{s \in [0,t]} \bigl\| \rho^N_s \bigr\|_\infep \le \kappa_t + \gamma.
\]
Next, we can bound $\Prob(\Omega_1)$ and $\Prob(\Omega_2)$ with the help of Lemma~\ref{lem:DevFixt}, Corollary~\ref{cor:dev_Dt}, and point $(ii)$ of Proposition~\ref{prop:MomDev} (on the control of the exponential moments of $\rho_t$): it leads precisely to
\begin{align*}
\Prob \biggl(  \sup_{s \in [0,t]} \bigl\| \rho^N_s  & \bigr\|_\infep \ge \kappa_t + \gamma \biggr) 
\le \Prob \bigl(\Omega_1 \bigr) + \Prob \bigl(\Omega_2 \bigr) \\
& \le 
\sum_{k=0}^K \biggl(  \frac{ 4  \| \rho_{t_k} \|_\infty  N (\ep \alpha)}{\lambda}  + 2
 +  N \MM_\lambda(\rho_{t_k}) \biggr)\,  e^{-2N(\ep \alpha)^2}
+
8\, N K\,  e^{- \frac \beta 2 \min \bigl(\beta , \lambda \bigr)} \\
& \le 
\biggl( \frac t{\Delta t} + 1 \biggr)\biggl(  \frac{ 4  \kappa_t N (\ep \alpha)}{\lambda}  + 2
 +  2N e^{\lambda(\frac{1}{2}+ \lambda)t}\,\MM_\lambda^{x,v}(f_0) + 8 N  \biggr)\,  e^{-2N(\ep \alpha)^2} \\
 & = 
 \biggl( \frac {(2 \kappa_t + \gamma)(c_\lambda + \beta) t }{\ep \gamma}  + 1 \biggr)\biggl(  \frac{ 2 \kappa_t  (\ep \gamma)}{\lambda}  
 +  2e^{\lambda(\frac{1}{2}+ \lambda)t}\,\MM_\lambda^{x,v}(f_0) + 10  \biggr)\,N \,   e^{- \frac12 N(\ep \gamma)^2}.
\end{align*}
Using that by assumption $\ep \gamma \le \frac 12 \kappa_t  $, and the expression of $\beta$, leads to with 
\begin{align*}
\Prob \biggl(  \sup_{s \in [0,t]} \bigl\| \rho^N_s  \bigr\|_\infep \ge \kappa_t + \gamma \biggr) 
& \le \bs{C_1} \biggl( 1 + t\frac{2 \kappa_t + \gamma} {\ep \gamma} \Bigl(  c_\lambda + \sqrt N \ep \gamma \max(1, \sqrt N \ep \gamma \lambda^{-1}\Bigr)\biggr)\,N \,   e^{- \frac12 N(\ep \gamma)^2},
\end{align*}
$\bs{C_1}  := 10 +  \kappa_t^2 \lambda^{-1}  
 +  2e^{\lambda(\frac{1}{2}+ \lambda)t}\,\MM_\lambda^{x,v}(f_0)$.
The conclusion follows using that  $ \lambda  N^{-1/2} \le \ep \gamma$ by assumption.

%%%%%%%%%%%%%%%%%%%%%%%%%%%%%%%%%%%%%%%%%%%%%%%%%%%
%
%
%
%
%
%
%
%
%
%
%
%
%
%
%
%
%%%%%%%%%%%%%%%%%%%%%%%%%%%%%%%%%%%%%%%%%%%%%%%%%%%
\section{Proof of Theorem ~\ref{thm:main}} \label{sec:main}

\subsection{MKW estimates on devitation between particle and coupled systems}

The first step of the proof is similar to the first step of the proof of Theorem~\ref{thm:MeaPC}.  Precisely we start with equation~\eqref{eq:MKE},  which reads now
\[
\begin{split}
\frac{1}{N}\sum_{i=1}^N\sup_{s\in[0,t]}\left | \Xis-\Yis \right| & \leq  \int_0^t\frac{1}{N}\sum_{i=1}^N|V_{i,s}^N-W_{i,s}^N|ds\\
\frac{1}{N}\sum_{i=1}^N\sup_{s\in[0,t]}\left | \Vis-\Wis \right|
& \le \int_0^t \frac{1}{N^2}\sum_{i,j=1}^N \left| K(X^N_{i,s}-X^N_{j,s})-K(Y^N_{i,s}-Y^N_{j,s})\right| \, ds +  \frac t{N-1} +\int_0^t  \Lambda_s^N \,ds, \\
 \text{where} \quad  
\Lambda_s^N := \frac1N  \sum_{i=1}^N  &\Lambda_{i,s}^N \, ds = 
\frac1N \sum_{i=1}^N  \biggl | \frac{1}{N-1}\sum_{j=1}^{N}K(Y^N_{i,s}-Y^N_{j,s})- \int_{\R \times \R } K(Y^N_{i,s}-x)\mu_s(dx,dv) \biggr|,
\end{split}
\]
since the initial conditions are now equal.
Remark that if we introduce $\sigma,\tau$ two independent random variables with uniform law on $\{1,2,\ldots,N \}$, then the sum involving $K$ becomes
\[
\frac{1}{N^2}\sum_{i,j=1}^N \left| K(X^N_{i,s}-X^N_{j,s})-K(Y^N_{i,s}-Y^N_{j,s})\right| 
= \E_{\sigma,\tau} \Bigl[ \left| K(X^N_{\sigma,s}-X^N_{\tau,s})-K(Y^N_{\sigma,s}-Y^N_{\tau,s})\right| \Bigr],
\]
where we emphasize that the expectation is taken only with respect to $(\sigma,\tau)$. So, we are in position to apply point $(iii)$ Lemma~\ref{lem:rope} ; \emph{i.e.\ }the part involving discrete uniform norms, and get:
\begin{align*}
\frac{1}{N}\sum_{i=1}^N\sup_{s\in[0,t]}\left | \Vis-\Wis \right|
 & \le  8 \int_0^t \| \rho^N_s \|_\infep \biggl( \frac1N \sum_{i=1}^N \left| X^N_{i,s} - Y^N_{i,s} \right| + \frac \ep 2 \biggr) \, ds  +  \frac t{N-1} +\int_0^t  \Lambda_s^N \, ds \\
 & \le \int_0^t \bigl( 1 + 8 \| \rho^N_s \|_\infep\bigr) 
 \biggl( \frac1N \sum_{i=1}^N \left| X^N_{i,s} - Y^N_{i,s} \right| + \frac \ep 2 + \frac1{N-1}
 + \sup_{u \le t} \Lambda_u^N \biggr) \,ds
\end{align*}
where $\rho^N_s$ is the empirical measure of the $(Y_{i,s}^N)_{1\le i \le N}$: $\rho^N_s := \frac1N \sum_{i=1}^N \delta_{Y_{i,s}^N}$.
Applying finally Gronwall's Lemma on the interval $[0,t]$ where the quantity $\sup_{u \le t} \Lambda_u^N$ may be considered as fixed, we get:
\begin{align}\label{eq:EstCoup}
\frac{1}{N}\sum_{i=1}^N   & \sup_{s\in [0,t]} \bigl( |\Xis- \Yis|+   |\Vis-\Wit \bigr) 
\nonumber \\
&  \hspace{15mm}  \le 
\biggl( \frac \ep 2 + \frac1{N-1} + \sup_{s \le t} \Lambda_s^N \biggr)\,
\exp \biggl( t + 8 \int_{0}^{t} \left \|\rho_s^N  \right \|_\infep \, ds \biggr), \\
& \text{where} \quad 
\Lambda_s^N := \frac1N  \sum_{i=1}^N
\biggl | \frac{1}{N-1}\sum_{j=1}^{N}K(Y^N_{i,s}-Y^N_{j,s})- \int_{\R \times \R } K(Y^N_{i,s}-x)\mu_s(dx,dv) \biggr|.
\label{def:Lambda}
\end{align}
We now focus on finding some concentration inequalities for the random variable $\sup_{t\in [0,T]}\Lambda_t^N$. In order to prove  concentration inequalities for these supremum in time, we follow the same steps as in the proof of Proposition~\ref{prop:DevBound}.
Once it is be done, we will combine them with the concentration inequalities on 
 $\sup_{t\in [0,T]}\left \|\rho_t^N  \right \|_{\infty,\ep}$ given by Proposition~\ref{prop:DevBound}, and we will obtain some deviation upper bounds for 
\[
\frac{1}{N}\sum_{i=1}^N   \sup_{s\in [0,t]} \left( |\Xis- \Yis|+ |\Vis-\Wit| \right).
\]

%%%%%%%%%%%%%%%%%%%%%%%%%%%%%%%%%%%%%%%
\subsection{Estimation of the fluctuations term $\Lambda_t^N$ at fixed time $t$}
We first establish the

\begin{lem} \label{lem:dev_G}
Let $(Y^N_i)_{1 \le i \le N}$ be  i.i.d. random variables, with a diffuse common law (\emph{i.e.\ }a law that does not charge any atom). For $i=1,\ldots,N$ define the random variable $\Lambda^N$ as follows:
\begin{equation*}
\Lambda^N= \frac1N \sum_{i=1}^N \biggl|\frac{1}{N-1}\sum_{j\neq i} K(Y^N_i-Y^N_j)- \E \bigl[ K(Y^N_i-Y^N_j) \big| Y^N_i \bigr] \biggr|
\end{equation*}
Then for all $\alpha>0$,
\begin{equation*}
\Prob\left( |\Lambda^N| \geq \alpha \right)\leq 2 N \, e^{-2\alpha^2 (N-1)}.
\end{equation*}
\end{lem}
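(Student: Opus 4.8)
The plan is to reduce the statement to a conditional Binomial deviation estimate and then invoke Lemma~\ref{lem:DevBern}. Since $\Lambda^N \ge 0$ is an arithmetic mean, write $\Lambda^N = \frac1N \sum_{i=1}^N \Lambda_i^N$ with $\Lambda_i^N := \bigl| \frac1{N-1}\sum_{j\neq i} K(Y^N_i - Y^N_j) - \E[K(Y^N_i - Y^N_j)\,|\,Y^N_i]\bigr|$. An average never exceeds its largest term, so $\{\Lambda^N \ge \alpha\} \subset \bigcup_{i=1}^N \{\Lambda_i^N \ge \alpha\}$; by the union bound and the exchangeability of the $(Y^N_i)_{i\le N}$ this gives $\Prob(\Lambda^N \ge \alpha) \le N\,\Prob(\Lambda_1^N \ge \alpha)$. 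It thus suffices to control the deviation of the single term $\Lambda_1^N$.

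For that, I would condition on $Y^N_1 = y$. The variables $K(y - Y^N_j)$, $j = 2,\dots,N$, are then i.i.d., and because the common law is diffuse one has $Y^N_j \neq y$ almost surely, so each $K(y - Y^N_j)$ takes values in $\{-\tfrac12, +\tfrac12\}$ only. Setting $B_j := K(y - Y^N_j) + \tfrac12 \in \{0,1\}$ (replace $K$ by $-K$ in the attractive case), the $B_j$ are i.i.d. Bernoulli with parameter $p(y) := \Prob(B_2 = 1)$, and by construction $\frac1{N-1}\sum_{j=2}^N K(y - Y^N_j) - \E[K(y - Y^N_2)] = \frac1{N-1}\sum_{j=2}^N B_j - p(y)$. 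Hence, conditionally on $Y^N_1 = y$, one has $\Lambda_1^N = \bigl| \frac1{N-1} X - p(y)\bigr|$, where $X := \sum_{j=2}^N B_j$ is a Binomial variable of parameter $(N-1, p(y))$.

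Applying Lemma~\ref{lem:DevBern} with $N$ replaced by $N-1$ then yields $\Prob(\Lambda_1^N \ge \alpha \mid Y^N_1 = y) = \Prob\bigl(|X - (N-1)p(y)| \ge (N-1)\alpha\bigr) \le 2\, e^{-2\alpha^2 (N-1)}$, uniformly in $y$. Integrating over the law of $Y^N_1$ gives $\Prob(\Lambda_1^N \ge \alpha) \le 2\, e^{-2\alpha^2(N-1)}$, and combining with the reduction above produces the claimed bound $\Prob(|\Lambda^N| \ge \alpha) \le 2N\, e^{-2\alpha^2(N-1)}$. I do not expect any real obstacle here; the only point requiring slight care is the role of the diffuseness hypothesis, which is what guarantees that the $K(y - Y^N_j)$ are genuinely $\{\pm\tfrac12\}$-valued and hence that $B_j$ is a true Bernoulli variable, so that Lemma~\ref{lem:DevBern} applies verbatim — without it one would instead appeal to Hoeffding's inequality for bounded i.i.d.\ variables (with range $1$), which gives the same exponent.
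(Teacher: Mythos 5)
Your argument coincides with the paper's proof: both reduce $\Lambda^N\ge\alpha$ to $\max_i\Lambda_i^N\ge\alpha$ via union bound and exchangeability, then condition on $Y^N_1$, recognize $\sum_{j\ge 2}\bigl(K(Y^N_1-Y^N_j)+\tfrac12\bigr)$ as a Binomial$(N-1,p)$ variable (using diffuseness to rule out ties), and invoke Lemma~\ref{lem:DevBern} with $N-1$ in place of $N$. This is correct and identical in structure to the paper's Step 1 and Step 2.
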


\begin{proof}
{\sl Step 1. A calculation with $Y^N_1$ frozen.} To begin, we ``freeze'' $Y^N_1 = a$ and define
\[
\Lambda^N_1(a) := \frac{1}{N-1}\sum_{j\ge 2} K(a-Y^N_j)- \E \bigl[ K(a-Y^N_j)\bigr]. 
\]
By the definition~\eqref{eq:defK} of $K$, and since $\Prob(Y^N_j =a) =0$ by assumption the random variable $\zeta_j^a$ defined by:
\begin{equation*}
\zeta_j^a=K(a- Y^N_j)+\frac{1}{2}=
\begin{cases}
0 & \text{ si } a< Y^N_j \\ 
1/2 & \text{ si } a= Y^N_j \\ 
1 & \text{ si } a> Y^N_j
\end{cases}
\end{equation*}
is a Bernoulli variable with parameter $p_a :=  \Prob( Y^N_j >a)$. 
 But since $ p_a-\frac{ 1}{2} = \E[K(a-Y^N_j)]$, we have
$\sum_{j\ge 2}\zeta_j^a-(N-1)p_a =   \Lambda^N_1(a)$.
So, by an application of Lemma~\ref{lem:DevBern} to the binomial variable $\sum_{j \le 2} \zeta_j^a$:
\begin{equation*}
\Prob\left ( \left |\Lambda_1^N(a) \right |\ge \alpha  \right )
=
\Prob\biggl( \biggl |\sum_{j\neq i}\zeta_j^a-(N-1)p_a\biggr | \geq (N-1) \alpha  \biggr)
 \le 2 \, e^{-\alpha (N-1)\alpha^2},
\end{equation*}

\medskip
{\sl Step 2. Summing up on $N$.}
Using the notation introduced in the previous step, we can rewrite 
$
\Lambda^N=\frac{1}{N}\sum_{i=1}^N \left | \Lambda_i^N(Y^N_i) \right |,
$
and then
\begin{align*}
\Prob\left ( \Lambda^N \geq \alpha \right ) &\le \Prob\left (\sup_{i=1,\ldots,N} |\Lambda_i^N(Y^N_i)| \geq \alpha \right )
 \leq \sum_{i=1}^N \Prob\left ( |\Lambda_i^N(Y^N_i)| \geq \alpha \right ) \\
 &=  N\,  \E\Bigl[ \Prob\bigl ( |\Lambda_1^N(Y^N_1)| \geq \alpha \big| Y^N_1 \bigr) \Bigr],
\end{align*}
where we have used the fact that the variables $(Y^N_i)_{1 \le i \le N}$ are exchangeable.
But by  independence of $Y^N_1$ and $(Y^N_i)_{i \ge 2}$, we obtain using the previous step that $\Prob\bigl ( |\Lambda_1^N(Y^N_1)| \geq \alpha \big| Y^N_1 \bigr)  \le 2 \, e^{-2 (N-1)\alpha^2}$ and the conclusion follows.
\end{proof}

%%%%%%%%%%%%%%%%%%%%%%%%%%%%%%%%%%%
\subsection{Estimation on the supremum in time of the fluctuations term}

\begin{lem} \label{lem:DevFluVar}
Assume that $(Y^N_i,W^N_i)$ are $N$ independent copies of a solution $(Y,W)$ to~\eqref{eq:NLSDE} of initial law $f_0$ satisfying $\mathcal{M}^{x,v}_{\lambda}(f_0) < +\infty$ for some $\lambda >0$, and $\Lambda^N_t$ is defined by~\eqref{def:Lambda}. Then, with $\rho_s := \LL(Y_s)$  $\kappa_t:= \sup_{0 \le s \le t} \| \rho_s \|_\infty$ we have provided that $\ep \le \min \Bigl( \frac 1 {16}, \frac{\lambda}{2}, \lambda^{-2} \Bigr)$.
\begin{equation*}
\Prob\left ( \sup_{s\in [0,t]} | \Lambda_s^N| \geq \bs{C_t}  \ep \right )\leq 
(\ep +  t)   
\biggl( \frac 5 \ep  +  4 \kappa_t N  \lambda^{-1}  + \frac N  \ep  e^{\lambda t  \bigl(\frac 12 + \lambda \bigr) } \E\Bigl[ e^{\lambda(|Y_0|+ |W_0|) }\Bigr] \biggr)
e^{- 2 N  \ep^2},
\end{equation*} 
with $\bs{C_t} :=   36  + 80 \kappa_t + (1+ 3 \kappa_t)  \frac 8 \lambda \ln \E \Bigl[ e^{\lambda |W_0|} \Bigr]$.
\end{lem}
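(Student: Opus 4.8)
The plan is to follow the three-step scheme used in the proof of Proposition~\ref{prop:DevBound}: control $\Lambda^N_t$ at a fixed time, then control its oscillation on short time intervals, and finally patch these along a well-chosen time grid $t_k=k\,\Delta t$, $0\le k\le K:=\lfloor t/\Delta t\rfloor+1$. The mesh $\Delta t$ and the various ``excess levels'' entering below are tuned, exactly as in Proposition~\ref{prop:DevBound}, so that every exponential rate produced collapses to $e^{-2N\ep^2}$; this forces $\Delta t$ to be of order $\ep^2$ (up to the constants $\kappa_t,\lambda,c_\lambda$), which is admissible, i.e. $\Delta t\le\min(\tfrac1{16},\lambda^{-2})$, precisely because of the hypothesis $\ep\le\min(\tfrac1{16},\tfrac\lambda2,\lambda^{-2})$.

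For the fixed-time input, I invoke Lemma~\ref{lem:dev_G} at each grid point: for $s>0$ the common law $\rho_s$ of the $\Yis$ is diffuse (it has a bounded density), so the quantity $\int_{\R^2}K(\Yis-x)\mu_s(dx,dv)$ equals $\E[K(\Yis-\Yjs)\mid\Yis]$ and Lemma~\ref{lem:dev_G} applies, giving $\Prob(\Lambda^N_{t_k}\ge\alpha)\le 2N\,e^{-2\alpha^2(N-1)}$ for $\alpha$ a fixed multiple of $\ep$; a union bound over the $K+1$ grid points costs a factor $K+1$.

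The oscillation step is the heart of the proof and the main obstacle. Write $\Lambda^N_s=\tfrac1N\sum_i|A^N_{i,s}-B^N_{i,s}|$ with $A^N_{i,s}:=\tfrac1{N-1}\sum_{j\ne i}K(\Yis-\Yjs)$ and $B^N_{i,s}:=\widetilde K[\mu_s](\Yis)$, so for $s\in[t_k,t_{k+1}]$ one has $|\Lambda^N_s-\Lambda^N_{t_k}|\le\tfrac1N\sum_i(|A^N_{i,s}-A^N_{i,t_k}|+|B^N_{i,s}-B^N_{i,t_k}|)$. On the event that no particle moves by more than $D:=\Delta t\,(c_\lambda+\beta)$ on $[t_k,t_{k+1}]$ (with $\beta$ chosen as in Proposition~\ref{prop:DevBound}), the rope bound~\eqref{eq:rope} gives $|A^N_{i,s}-A^N_{i,t_k}|\le 2\,\widehat{\rho}^{i,N}_{t_k}\bigl(B(Y^N_{i,t_k},2D)\bigr)$, and averaging turns $\tfrac1N\sum_i|A^N_{i,s}-A^N_{i,t_k}|$ into a quantity of order $D\,\|\rho^N_{t_k}\|_{\infty,2D}$, hence of order $D\,(\kappa_t+\alpha)\sim\ep^2$ once $\|\rho^N_{t_k}\|_{\infty,2D}$ is controlled below $\kappa_t+\alpha$. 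For the limit term I use $|B^N_{i,s}-B^N_{i,t_k}|\le|\widetilde K[\mu_s](\Yis)-\widetilde K[\mu_s](Y^N_{i,t_k})|+\|\widetilde K[\mu_s]-\widetilde K[\mu_{t_k}]\|_\infty\le\kappa_t|\Yis-Y^N_{i,t_k}|+\|F_s-F_{t_k}\|_\infty$, where $F_s$ denotes the CDF of $\rho_s$ (so $\widetilde K[\mu_s]=\pm(F_s-\tfrac12)$) and the Lipschitz bound on $\widetilde K[\mu_s]$ is the one recalled in Section~\ref{sec:WPlaw}. The first piece is again $\le\kappa_t D$ on the good event; the second is deterministic, and I bound it by $\|F_s-F_{t_k}\|_\infty\le 2\delta\kappa_t+\Prob(\sup_{u\in[t_k,s]}|Y_u-Y_{t_k}|>\delta)$, choosing $\delta$ of order $\Delta t\,(c_\lambda+\cdots)$ via point $(iii)$ of Proposition~\ref{prop:MomDev} (or, more crudely, via the order-one moment bound $(iv)$), which yields $\|F_s-F_{t_k}\|_\infty\lesssim\ep$. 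Hence, on the intersection of the ``density'' and ``no-jump'' good events, $\sup_{s\in[t_k,t_{k+1}]}|\Lambda^N_s-\Lambda^N_{t_k}|\le\bs{C_t}\,\ep$, while the complementary events are controlled by Lemma~\ref{lem:DevFixt} (together with the exponential-moment bound of Proposition~\ref{prop:MomDev}$(ii)$ applied to $\rho_{t_k}$) and by Corollary~\ref{cor:dev_Dt}.

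Finally I collect the pieces through a union bound over the $\le K+1$ grid points, combining $\Prob(\max_k\Lambda^N_{t_k}\ge\alpha)\le(K+1)\,2N\,e^{-2\alpha^2(N-1)}$, the density deviations $\le(K+1)\bigl(\tfrac{C\kappa_tN\ep}\lambda+2+N\MM_\lambda(\rho_{t_k})\bigr)e^{-2N\ep^2}$ with $\MM_\lambda(\rho_{t_k})\le e^{\lambda t(\frac12+\lambda)}\MM^{x,v}_\lambda(f_0)$, and the oscillation deviations $\le(K+1)\,N\,e^{-2N\ep^2}$; using $K+1\le t/\Delta t+1$ and $\Delta t\sim\ep^2/(\kappa_t+\alpha)$ so that $(K+1)\Delta t\lesssim t+\ep$, one obtains the stated inequality after tracking constants, which is where the explicit value of $\bs{C_t}$ and of the polynomial prefactor come from. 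The genuinely delicate points are keeping all the scales mutually consistent so that $\Delta t$ remains admissible and every exponent reduces to $e^{-2N\ep^2}$, and the deterministic modulus-of-continuity estimate for $s\mapsto\widetilde K[\mu_s]$ in sup norm; the rest is routine bookkeeping.
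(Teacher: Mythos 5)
Your overall three-step scheme (fixed-time concentration via Lemma~\ref{lem:dev_G}, oscillation control on short time intervals, union bound over a time grid) is exactly the structure of the paper's proof, and you correctly identify the supporting ingredients (Lemma~\ref{lem:DevFixt}, Corollary~\ref{cor:dev_Dt}, Proposition~\ref{prop:MomDev}). However, two points deviate from the paper and, if carried out as written, would not yield the stated bound.

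First, the mesh scale $\Delta t\sim\ep^2$ is wrong. The paper simply takes $t_k=k\ep$, i.e.\ $\Delta t=\ep$, and this is forced by the shape of the claimed polynomial prefactor: the union bound over grid points contributes a factor $k_M+1\approx t/\ep+1=(\ep+t)/\ep$, which is precisely the $(\ep+t)\bigl(\tfrac5\ep+\cdots\bigr)$ appearing in the statement. The Gaussian rate $e^{-2N\ep^2}$ is obtained by applying Corollary~\ref{cor:dev_Dt} on intervals of length $\ep$ with the excess level $\beta=2\ep$ (admissible because $\ep\le\lambda/2$), which gives $e^{-\frac\beta2\min(\beta,\lambda)N}=e^{-2N\ep^2}$; no quadratic mesh is needed. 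With $\Delta t\sim\ep^2$ the union bound produces an extra factor $\sim1/\ep$, giving a strictly weaker prefactor $\sim t/\ep^2$ that does not match the lemma.

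Second, your oscillation estimate is organized differently, and in a way that introduces extra work the paper avoids. For the empirical-interaction term you first restrict to the good event (particle displacements $\le D$) and then invoke the rope bound, ending up with the discrete norm at scale $2D$, whereas the paper applies Lemma~\ref{lem:rope}(ii) directly to the pair of empirical vectors $\tfrac1N\sum_i\delta_{(Y^N_{i,t},Y^N_{i,s})}$, which produces $\|\rho^N_t\|_{\infty,\ep}$ at the fixed scale $\ep$ plus the additive term $\tfrac1N\sum_i|Y^N_{i,s}-Y^N_{i,t}|$; this keeps the event $\Omega_1$ at a single scale. For the limit term $B^N_{i,s}$, the paper again uses the rope lemma (point (iii)) with the couples $\bigl(\tfrac1N\sum_i\delta_{(Y^N_{i,t},Y^N_{i,s})},\;(Y_t,Y_s)\bigr)$, which yields the deterministic quantity $\E[|Y_s-Y_t|]$, directly controlled by Proposition~\ref{prop:MomDev}(iv). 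Your alternative via $\|F_s-F_{t_k}\|_\infty\le2\delta\kappa_t+\Prob(\sup_u|Y_u-Y_{t_k}|>\delta)$ is sound in principle, but it leaves a residual probability to tune against $\delta$, which is exactly the kind of bookkeeping the paper sidesteps by staying inside Lemma~\ref{lem:rope}. In short, the architecture is right, but you should set $\Delta t=\ep$ and run the oscillation step through Lemma~\ref{lem:rope}(ii)--(iii) rather than the good-event-first and CDF-modulus route.
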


\begin{proof}
{\sl Step 1. Bounding the time fluctuations.}
Let $(Z_t)_{t \ge 0}=(Y_t,W_t)_{t \ge 0}$ be a solution to the non linear SDE~\eqref{eq:NLSDE}. We show in that step that for $N \ge 2$:
\begin{multline}  \label{eq:time_flu}
\sup_{s\in[t,t+\Delta t]}|\Lambda^N_s-\Lambda^N_t|  \leq \biggl( 16\, \left\|\rho^N_t\right\|_\infep + 4 \| \rho_t \|_\infty  \biggr)  \sup_{s \in [t,t+\Delta t]} \frac1N \sum_{i=1}^N \bigl|\Yis-\Yit \bigr|\\
 + 4 \, \left \| \rho^N_t \right \|_\infep 
 %\sup_{s \in [t,t+\Delta t]}\E_{Z.}\left [  | Y_s-Y_t | \right ]
 |s-t| \Bigl( \E \bigl[ |W_0| \bigr] +2 \Bigr)
 + 10\,  \left\|\rho^N_t\right\|_\infep \ep.
\end{multline}
Indeed, by the definition ~\eqref{def:Lambda} of $\Lambda^N_s$:
\begin{multline*}
%\begin{split}
|\Lambda_N^s-\Lambda_N^{t}|
 \leq \frac{1}{N(N-1)}\sum_{i=1}^{N}\sum_{j=1}^{N}\left | K(\Yis-\Yjs)-K(\Yit-\Yjt) \right | \\
 +\frac{1}{N}\sum_{i=1}^N\left | \int_\R K(\Yis-x)\rho_s(dx)- \int_\R  K(\Yit-x) \,\rho_t(dx) \right |
%\end{split}
\end{multline*}
Using the second point of Lemma~\ref{lem:rope} with two copies of a vector of joint law $\frac1N \sum_i \delta_{(\Yit,\Yis)}$, we may bound the first term in the r.h.s. by
\begin{equation*}
\frac{1}{N(N-1)}\sum_{i=1}^N\sum_{j=1}^N\left | K(\Yis-\Yjs)-K(\Yit-\Yjt)  \right |
\leq 
8 \frac N{N-1} \left\|\rho^N_t\right\|_{\infty,\ep} \biggl( \frac{1}{N}\sum_{i=1}^N |\Yis-\Yit|+ \frac\ep 2 \biggr).
\end{equation*}
To estimate the second term in the r.h.s, we use the third point of Lemma~\ref{lem:rope}, applied to independent couples: The first one   with law $\frac1N \sum_{i} \delta_{(\Yit,\Yis)}$ and $(Y_t,Y_s)$. It leads to the estimate
\begin{multline*}
\frac{1}{N}\sum_{i=1}^N\left | \int_\R K(\Yis-x)\rho_s(dx)- \int_\R  K(\Yit-x) \,\rho_t(dx) \right | \\
 \leq 4 \biggl( \| \rho_t \|_\infty \frac1N \sum_{i=1}^N  |\Yis-\Yit| + 
 \left \| \rho_t^N \right \|_\infep  \Bigl( \E \bigl[|Y_s-Y_t| \bigr] + \ep/2  \Bigr)\biggr ).
\end{multline*}
Putting these two estimates together, using point $(iv)$ of Proposition~\ref{prop:MomDev}  in order to bound $\E\bigl[ |Y_t-Y_s|\bigr]$ and taking the supremum in time leads to~\eqref{eq:time_flu}.

\medskip
{\sl Step 2. Controlling the deviation.}
We define $t_k= k \ep$ for $0 \le k \le k_M := \lfloor t \ep^{-1} \rfloor$ and recall that by assumption $\ep \le \min \Bigl( \frac1{16},\frac{ \lambda}{2},  \lambda^{-2}\Bigr)$. Consider the events 
\begin{align*}
\Omega_1 &:= \bigl\{ \| \rho^N_{t_k} \|_\infep \le \kappa_t +1  \text{ for all } 0 \le k \le k_M \bigr\} , \\
\Omega_2 &:= \biggl\{ \sup_{s \in [t_k,t_k+\ep]} \frac1N \sum_{i=1}^N \bigl|\Yis-Y^N_{i,t_k} \bigr| \le \ep ( c_\lambda + 2 \ep) \text{ for all } 0 \le k \le k_M \biggr\}, \\
\Omega_3 &:= \bigl\{ | \Lambda^N_{t_k} | \le 4 \ep \text{ for all } 0 \le k \le k_M \bigr\}.
\end{align*} 
By Lemma~\ref{lem:DevFixt} and the point $(ii)$ of Lemma~\ref{prop:MomDev}, 
\[
\Prob(\Omega_1^c) 
\le (k_M+1) \biggl( 4 \kappa_t N \ep \lambda^{-1} + 2 + N e^{\lambda t  \bigl(\frac 12 + \lambda \bigr) } \E\Bigl[ e^{\lambda(|Y_0|+ |W_0|)}\Bigr] \biggr) e^{-2 N \ep^2}.
\] 
By Corollary~\ref{cor:dev_Dt}, $\Prob(\Omega_2^c) \le (k_M+1) e^{- 2 N \ep^2}$. 
By Lemma~\ref{lem:dev_G}, $\Prob(\Omega_3^c) \le 2 (k_M+1) \, e^{- 8 (N-1) \ep^2} \le 2 (k_M+1)\,e^{- 2 N \ep^2}$ if $N \ge 2$.
When the three above events are realized, we get with the help of the bound~\eqref{eq:time_flu}
\begin{align*}
\sup_{s \in [0,t]} | \Lambda^N_s| & \le \sup_{k=0, \ldots , k_M} | \Lambda^N_{t_k}| + 
\sup_{k=0, \ldots, k_M, \; s\in[t_k,t_k+\ep]}|\Lambda^N_s-\Lambda^N_{t_k}|  \\
& \le 4 \ep +  (  20 \kappa_t + 4 ) \ep (c_\lambda + \ep ) + 4 \,( \kappa_t + 1) 
 \ep \Bigl( \E \bigl[ |W_0| \bigr] +2 \Bigr)
 + 10\,  ( \kappa_t + 1 ) \ep \\
 & \le \bs{C'_t} \ep,
\end{align*}
with $\bs{C'_t} := 26 + 4 c_\lambda+ 4 \E [ |W_0|] +  \kappa_t  ( 20 c_\lambda +  4 \E [ |W_0|]+  30 )$. 
Using the expression of $c_\lambda$ and the fact that $ \lambda \E [|W_0|] \le \ln E\bigl[ e^{\lambda |W_0|}\Bigr]$, it is not difficult to show that $\bs{C_t'} \le \bs{C_t}$, where $\bs{C_t}$ is the constant introduced in Lemma~\ref{lem:DevFluVar}.
Moreover, gathering the three previous estimates on $\Prob(\Omega_i^c)$ for $i=1,2,3$, we get
\begin{align*}
\Prob \bigl(\Omega_1^c \cup \Omega_2^c \cup \Omega_3^c \bigr)  & \le \frac {\ep +t} \ep  
\biggl( 5 +  4 \kappa_t N \ep \lambda^{-1}  + N e^{\lambda t  \bigl(\frac 12 + \lambda \bigr) } \E\Bigl[ e^{\lambda(|Y_0|+ |W_0|)}\Bigr] \biggr) \, e^{-2 N \ep^2}, 
\end{align*}
which concludes the proof.
\end{proof}

\subsection{Conclusion of the proof of Theorem~\ref{thm:main}}
We now consider the two events
\[
\Omega_1 := \Bigl\{ \sup_{s\in [0,t]} | \Lambda_s^N| \geq \bs{C_t}  \ep \Bigr\} , \qquad
\Omega_2 :=  \Bigl\{ \sup_{s \in [0,t]} \bigl\| \rho^N_s  \bigr\|_\infep \ge \kappa_t + 1 \Bigr\},
\]
where $\bs{C_t}$ is the constant defined in~\ref{lem:DevFluVar}.
By Lemma~\ref{lem:DevFluVar},  and since  we also assume $\ep \ge \lambda N^{-1/2}$
\begin{align*}
\Prob \left ( \Omega_1^c \right ) & \le 
 (\ep +  t)    \biggl( \frac 5 \ep  +  4 \kappa_t N  \lambda^{-1}  + \frac N  \ep  e^{\lambda t  \bigl(\frac 12 + \lambda \bigr) } \E\Bigl[ e^{\lambda(|Y_0|+ |W_0|)}\Bigr] \biggr)
e^{- 2 N  \ep^2}, \\
& \le 
(\ep +t)     \lambda^{-1}  N^{\frac32} \biggl(    5+ 4 \kappa_t +     e^{\lambda t  \bigl(\frac 12 + \lambda \bigr) } \E\Bigl[ e^{\lambda(|Y_0|+ |W_0|)}\Bigr] \biggr)
e^{- 2 N  \ep^2}
\end{align*}
and by Proposition~\ref{prop:DevBound}, provided that $\ep  \le 5 \kappa_t \min\Bigl( \frac1{16}, \frac\lambda 2,  \lambda^{-2} \Bigr)$,  and since  we also assume $\ep \ge \lambda N^{-1/2}$, the following bound holds with $\bs{D_t}  := 10 +  \kappa_t^2 \lambda^{-1}  
 +  e^{\lambda(\frac12+ \lambda)t}\,\E\Bigl[ e^{\lambda(|Y_0| + |W_0|)} \Bigr]$
\begin{align*}
\Prob \bigl(  \Omega_2^c \bigr) 
&  \le \bs{D_t} \biggl( 1 + t\frac{2 \kappa_t + 1} {\ep } \Bigl(  c_\lambda + \sqrt N \ep  \max(1, \sqrt N  \ep \lambda^{-1} ) \Bigr)\biggr)\,N \,   e^{- \frac12 N\ep ^2}, \\
& \le \bs{D_t} \biggl( 1 + t\frac{2 \kappa_t + 1} {\ep } \Bigl(  c_\lambda +  N \ep^2 \lambda^{-1} \Bigl) \biggr)\,N \,   e^{- \frac12 N\ep ^2}, \\
& \le \bs{D_t} \lambda^{-1} \biggl( \ep + t (2 \kappa_t + 1)  \Bigl(  c_\lambda + \sqrt   N \ep \Bigr) \biggr)\,N^{\frac32} \,   e^{- 2 N \ep ^2}.
\end{align*}
If the two events are satisfied, then thanks to~\eqref{eq:EstCoup} we get
\begin{align*}
\frac{1}{N}\sum_{i=1}^N    \sup_{s\in [0,t]} \bigl( |\Xis- \Yis|+   |\Vis-\Wit| \bigr) &  \le 
\biggl( \Bigl(  \bs{C_t} + \frac 1 2 \Bigr) \ep  + \frac1{N-1}  \biggr)\,
\exp \biggl( t + 8 \int_{0}^{t} \left \|\rho_s  \right \|_\infty \, ds \biggr), \\
&  \le 
\biggl(   \bs{C_t} + \frac 1 2   + \frac2\lambda  \biggr)\,
\exp \biggl( t + 8 \int_{0}^{t} \left \|\rho_s  \right \|_\infty \, ds \biggr) \ep ,
\end{align*}
where we have used that $\ep \ge \lambda N^{-1/2} \ge \lambda \bigl(2(N-1)\bigr)^{-1}$ when $N \ge 2$. 
which concludes the proof since 
\[
\bs{B_t} :=  \bs{C_t} + \frac12 +  \frac2\lambda.
\]
Defining also 
\begin{align*}
\bs{A_t''}  := \lambda^{-1} \Biggl( \bs{D_t}\bigl(1+ (2 \kappa_t +1) c_\lambda \bigr) +5+  4 \kappa_t +     e^{\lambda t  \bigl(\frac 12 + \lambda \bigr) } \E\Bigl[ e^{\lambda(|Y_0|+ |W_0|)}\Bigr]  \Biggr) , \quad
\bs{A_t'}  := \lambda^{-1} \bs{D_t}(2 \kappa_t +1),
\end{align*}
we obtain from the previous bound
$
\Prob \bigl( \Omega_1^c \cup \Omega_2^c \bigr) \le 
(t + \ep) \bigl( \bs{A_t} + \bs{A_t'} \sqrt N \ep \bigr) N^{\frac32} e^{- 2 N \ep ^2},
$
and that concludes the proof. In particular it can be checked that $\bs{A''_t} \le \bs{A_t}$ where $\bs{A_t}$ is defined by
\begin{align*}
\bs{A_t} & := \lambda^{-1}  \biggl[12 +  \kappa_t^2 \lambda^{-1}  
 + 2\,  e^{\lambda(\frac{1}{2}+ \lambda)t}\,\E\Bigl[ e^{\lambda(|Y_0| + |W_0|)} \Bigr] \biggr]
\bigl(1+ (2 \kappa_t +1) c_\lambda \bigr),  \\
\bs{A_t'} & := \lambda^{-1}  \biggl[10 +  \kappa_t^2 \lambda^{-1}  
 +  e^{\lambda(\frac{1}{2}+ \lambda)t}\,\E\Bigl[ e^{\lambda(|Y_0| + |W_0|)} \Bigr] \biggr] (2 \kappa_t +1),\\
\bs{B_t} & :=   37 +  \frac2\lambda + 80 \kappa_t + (1+ 3 \kappa_t)  \frac 8 \lambda \ln \E \Bigl[ e^{\lambda |W_0|} \Bigr].
\end{align*}

%%%%%%%%%%%%%%%%%%%%%%%%%%%%%%%%%%%%%%%%%%%%%%%%%%%%%%%%%%%%%%%%%%
%
%
%
%
%
%%%%%%%%%%%%%%%%%%%%%%%%%%%%%%%%%%%%%%%%%%%%%%%%%%%%%%%%%%%%%%%%%%%
\section{Appendix}

\begin{prop}\label{prop:app}
Let $K:(t,x)\in \R^+\times \R \mapsto K_t(x)\in \R$ be a function belonging to $L^\infty_{loc}\bigl(\R^+, C^k(\R)\bigr)$ for all $k \in \N$ and to $C^1(\R^+ \times \R^2)$. Consider the unique  (in the class of measures) solution to the following linear PDE 
\begin{equation}\label{eq:LVFp}
\partial_t f_t+v \, \partial_x f_t+K_t(x) \, \partial_v f_t= \partial_v(\partial_v f_t +v f_t),
\end{equation}
for an initial condition $f_0 \in \PPP(\R^2)$ satisfying $\partial_x^k \partial_v^l f_0 \in L^2(\R^2)$ for all $k,l \in \N$. Then $f \in C^1(\R^+ \times \R^2)$ and is even two times continuously differentiable in $(x,v)$.
\end{prop}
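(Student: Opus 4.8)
The plan is to propagate Sobolev regularity of $f_t$ by energy estimates, and then to read off the pointwise regularity from Sobolev embedding and from the equation~\eqref{eq:LVFp} itself. Note first that the hypothesis $\partial_x^k\partial_v^l f_0\in L^2(\R^2)$ for all $k,l$ says exactly that $f_0\in H^s(\R^2)$ for every $s\ge0$, so the target is to show $f_t\in\bigcap_{s\ge0}H^s(\R^2)$ with $\sup_{t\in[0,T]}\|f_t\|_{H^s}<\infty$ for every $s,T$. Because the only control on $f_0$ is $H^s$-regularity — in particular no decay of the velocity tails — the energy manipulations below are only formal on general data; so I would first carry them out for Schwartz initial data, for which they are rigorous (the solution of the Kolmogorov-type equation~\eqref{eq:LVFp} with Schwartz datum stays rapidly decaying, as one checks e.g.\ on the $\varepsilon$-viscous-in-$x$, uniformly parabolic, approximation via the maximum principle), and then extend to general $f_0\in\bigcap_s H^s$ by density and linearity: approximating $f_0$ by Schwartz functions $f_0^n$ in every $H^s$, the a priori estimate applied to the solution $f^n-f^m$ of the (linear) equation~\eqref{eq:LVFp} with datum $f_0^n-f_0^m$ shows that $(f^n)$ is Cauchy in $H^s$ locally uniformly in $t$; the limit is a weak solution with datum $f_0$, hence equals $f$ by the uniqueness invoked in the statement.

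The heart of the matter is the energy estimate. Fix $s\ge0$, let $k+l\le s$, and set $g:=\partial_x^k\partial_v^l f_t$. Applying $\partial_x^k\partial_v^l$ to~\eqref{eq:LVFp} and testing against $g$ in $L^2(\R^2)$, three structural facts make the scheme close: (i) the transport term contributes $\int g\,v\,\partial_x g=\tfrac12\int v\,\partial_x(g^2)=0$, since $v$ is constant in $x$; (ii) the force term contributes $\int g\,K_t(x)\,\partial_v g=\tfrac12\int K_t(x)\,\partial_v(g^2)=0$, since $K_t$ is constant in $v$; (iii) the Fokker--Planck term contributes $\int g\,\partial_v(\partial_v g+vg)=-\|\partial_v g\|_{L^2}^2+\tfrac12\|g\|_{L^2}^2\le\tfrac12\|g\|_{L^2}^2$, i.e.\ the friction has the favourable sign up to a zeroth-order term. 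The remaining contributions are the commutators $[\partial_x^k\partial_v^l,\,v\,\partial_x]f$, $[\partial_x^k\partial_v^l,\,K_t(x)\partial_v]f$ and $[\partial_x^k\partial_v^l,\,\partial_v(v\,\cdot)]f$; a direct Leibniz computation gives, respectively, $l\,\partial_x^{k+1}\partial_v^{l-1}f$, $\sum_{j\ge1}\binom{k}{j}(\partial_x^j K_t)\,\partial_x^{k-j}\partial_v^{l+1}f$ and $l\,\partial_x^k\partial_v^l f$, i.e.\ in each case a linear combination of derivatives of $f$ of total order $\le s$ whose coefficients are integer constants or $x$-derivatives $\partial_x^j K_t$ with $j\le s$ — crucially the unbounded factor $v$ never reappears. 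Testing these against $g$ and using Cauchy--Schwarz bounds them by $C_s\bigl(1+\|K\|_{L^\infty([0,T];C^s)}\bigr)\|f_t\|_{H^s}^2$ (in terms of the full $H^s$-norm, since the commutators may redistribute derivatives between the $x$ and $v$ variables); summing over $k+l\le s$ and combining with (i)--(iii),
\[
\frac{d}{dt}\,\|f_t\|_{H^s}^2 \ \le\ C_s\bigl(1+\|K\|_{L^\infty([0,T];C^s)}\bigr)\,\|f_t\|_{H^s}^2 ,
\]
so Grönwall's lemma yields $\sup_{t\in[0,T]}\|f_t\|_{H^s}\le C(s,T)\,\|f_0\|_{H^s}<\infty$.

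It remains to convert these bounds into pointwise regularity. By the Sobolev embedding $H^s(\R^2)\hookrightarrow C^m_b(\R^2)$, valid for $s>m+1$, each $f_t$ is $C^\infty$ in $(x,v)$, with all $(x,v)$-derivatives bounded locally uniformly in $t$. For the joint regularity, $t\mapsto f_t$ is continuous from $\R^+$ into $\DD'(\R^2)$ (it is a distributional solution) and is bounded in $H^{s+1}$ on compact time-intervals; interpolating with $\|h\|_{H^s}\le\|h\|_{H^{s-1}}^{1/2}\|h\|_{H^{s+1}}^{1/2}$ upgrades this to continuity of $t\mapsto f_t$ into $H^s$, hence into $C^m_{loc}(\R^2)$, for every $s$ and $m$. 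Finally, writing out $\partial_t f_t=-v\,\partial_x f_t-K_t(x)\,\partial_v f_t+\partial_v^2 f_t+f_t+v\,\partial_v f_t$ exhibits $\partial_t f_t$ as a finite sum of products of the polynomial $v$, the jointly continuous map $(t,x)\mapsto K_t(x)$ (from the hypothesis), and $(x,v)$-derivatives of $f$ of order $\le2$ that are jointly continuous in $(t,x,v)$ by the previous step; hence $\partial_t f$ is continuous on $\R^+\times\R^2$. Together with the spatial smoothness this gives $f\in C^1(\R^+\times\R^2)$ and $f$ twice continuously differentiable in $(x,v)$, as claimed.

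The main obstacle is precisely the one flagged at the outset: because $f_0$ is only assumed $H^s$-regular, there is no control on the $v$-tails of $f_0$ (nor, a priori, of $f_t$), so the boundary terms discarded in the integrations by parts (i)--(iii) — e.g.\ $[v\,g^2]_{v=\pm\infty}$ in the Fokker--Planck term — cannot be dispatched by a naive density argument within $H^s$. The resolution is to run the entire energy scheme on Schwartz data, where those boundary terms genuinely vanish, and then transfer the estimates to the general case using only the linearity of~\eqref{eq:LVFp} and the uniqueness granted in the statement; the other, routine, point is keeping the commutator coefficients free of the unbounded variable $v$, which is what makes the right-hand side of the energy identity controllable by $\|f_t\|_{H^s}$ alone.
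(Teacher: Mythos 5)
Your proof follows the paper's own strategy essentially verbatim: apply $\partial_x^k\partial_v^l$ to~\eqref{eq:LVFp}, test against $g:=\partial_x^k\partial_v^l f_t$ in $L^2$, exploit the structural cancellations of the transport, force and Ornstein--Uhlenbeck terms, control the commutators (whose coefficients stay free of the unbounded factor $v$) by $H^s$-norms, close with Gr\"onwall, then finish by Sobolev/Morrey embedding for $(x,v)$-regularity and by reading $\partial_t f$ off the equation for time regularity. The genuine added value is that you flag, and repair, a point the paper's proof leaves implicit: with only $\partial_x^k\partial_v^l f_0\in L^2$ and no velocity moments, the boundary term $[v\,g^2]_{v=\pm\infty}$ discarded in the Fokker--Planck integration by parts need not vanish (one can build $g\in\bigcap_s H^s$ with $\liminf|v|g^2>0$ along a sparse sequence), so the energy identity is a priori only formal; running the estimate on Schwartz data and transferring to general $f_0\in\bigcap_s H^s$ by linearity, density and the assumed uniqueness is the correct fix. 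Two small remarks: for the sub-claim that Schwartz data stays sufficiently decaying, a lighter route than the viscous-approximation/maximum-principle sketch is to repeat the same $L^2$ energy estimate with polynomial weights $\langle v\rangle^N$ (the weight commutes benignly with the Ornstein--Uhlenbeck operator), which gives all the decay you actually need; and the interpolation step you invoke for time-continuity is superfluous, since the density argument already delivers $f^n\to f$ in $C([0,T];H^s)$ and hence $t\mapsto f_t\in H^s$ continuous, after which the paper's more direct observation that~\eqref{eq:diff-kxlv} bounds $\partial_t\partial_x^k\partial_v^l f$ in $L^\infty_{loc}$ gives joint continuity at once.
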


\begin{proof}
Differentiating equation~\eqref{eq:LVFp} $k$ times in $x$ variable, and $l$ times in $v$ variable leads to
\begin{multline} \label{eq:diff-kxlv}
\partial_t (\partial_x^k\partial_v^l f_t)+v \, \partial_x(\partial_x^{k}\partial_v^l f_t)+ K_t(x) \,\partial_v(\partial^k_x\partial^l_v f_t) -  \partial_v( v \, \partial_x^k\partial_v^l f_t ) - \partial_v^2 (\partial_x^k \partial^l_v f_t  ) \\
= - \sum_{k'=0}^{k-1}\binom{k}{k'} \partial_x^{k-k'}K_t(x) (\partial_x^{k'}\partial_v^{l+1}f_t)
- \partial_x^{k+1}\partial_v^{l-1}f_t  + l \, \partial_x^k\partial_v^l f_t,
\end{multline}
with the convention that a term containing a derivative with a negative power vanishes. 
Multiplying the above equation by $\partial_x^k\partial_v^l f_t$ and performing an integration by part, we obtain
\begin{align*}
\frac{d}{dt}\bigl \| \partial_x^k\partial_v^l f_t \bigr \|^2_2 
& \leq \bigl\| \partial_x^k\partial_v^l f_t \bigr\|_2  \bigl\|  \partial_x^{k+1}\partial_v^{l-1} f_t \bigl\|_2 
+C_{t,k}\sum_{k'=0}^{k-1}\binom{k}{k'}\bigl\|  \partial_x^{k'}\partial_v^{l+1}f_t \bigr\|_2 \bigr\| \partial_x^k\partial_v^l f_t \bigr\|_2 + \Bigl( l + \frac12 \Bigr) \bigl\| \partial_x^k\partial_v^l f_t \bigl \|^2_2, \\
& \text{where} \quad 
C_{t,k}=\sup_{k'=1,\cdots,k} \bigl\| \partial_x^{k'}K_t \bigr\|_\infty.
\end{align*}
Then summing these inequalities over the $k,l$ such that $k+l \le m$, we find
\[
\frac{d}{dt} H_m(t) \leq  \tilde C_{m,t} H_m(t),
\quad \text{where} \quad
H_m(t)=\sum_{k+l \le m} \left \| \partial_x^k\partial_v^l f_t \right \|^2_2,
\]
and the constant $\tilde C_{m,t}$ is locally bounded in time. Since by assumption $H_m(0) < \infty$ for any $m \in \N$, we get that for any  time $t \ge 0$,  $\sup_{s \in [0,t]} H_m(s) < \infty$.
Then by Morrey's inequality for $m$ large enough ($m=4$), $f\in L^\infty([0,t],C^2(\R^2))$, and so does $\partial_x^k\partial_v^l f$ for all $k,l\geq 0$. Using ~\eqref{eq:diff-kxlv}, we deduce that $\partial_t \partial_x^k\partial_v^l f\in L^\infty_{loc}(\R^+ \times \R^2)$, and in particular $\partial_x^k\partial_v^l f$ is continuous. So $f$ is two times continuously differentiable in $(x,v)$.  
And using finally~\eqref{eq:LVFp}, we see that $\partial_t f$ is itself continuous in all the variables, as a sum of continuous functions. This concludes the proof. 
\end{proof}

\end{document}